\documentclass[10pt, reqno]{amsart}

\usepackage{amsmath}
\usepackage{amsfonts}
\usepackage{amssymb}
\usepackage{latexsym}
\usepackage{bbm,amsmath,amsthm,epsfig,latexsym,marvosym}
\usepackage{amsfonts}

\newtheorem{theorem}{Theorem}[section]
\newtheorem{lemma}[theorem]{Lemma}
\newtheorem{proposition}[theorem]{Proposition}
\newtheorem{corollary}[theorem]{Corollary}
\newtheorem{remark}[theorem]{Remark}
\newtheorem{definition}[theorem]{Definition}
\newtheorem{example}[theorem]{Example}

\newcommand{\PP}{{\Phi}}
\newcommand{\g}{\gamma}
\newcommand{\e}{\epsilon}
\newcommand{\DIV}{{\rm div \,}}

\newcommand{\tr}{{\rm tr}}
\newcommand{\bary}{{\rm bar }\,}
\newcommand{\Rn}{\mathbb{R}^n}
\newcommand{\pa}{\partial}
\newcommand{\BPP}{\beta_{\PP}}
\newcommand{\Hn}{\mathcal{H}^{n-1}}
\newcommand{\M}{M_{\PP}}
\newcommand{\m}{m_{\PP}}

\newcommand{\NN}{\mathcal{N}}

\setlength{\textheight}{9in}

\addtolength{\oddsidemargin}{-.875in}

\addtolength{\evensidemargin}{-.625in}

\addtolength{\topmargin}{-.5in}

\addtolength{\textwidth}{1.5in}

\addtolength{\textheight}{-.825in}
\numberwithin{equation}{section}
\begin{document}

\title[A strong form of the quantitative Wulff inequality]{A strong form of the quantitative Wulff inequality}

\author{Robin Neumayer}
\address{Department of Mathematics, University of Texas at Austin, Austin, TX, USA}
\email{rneumayer@math.utexas.edu}

\begin{abstract}
Quantitative isoperimetric inequalities are shown for anisotropic surface energies where
  the isoperimetric deficit controls both the Fraenkel asymmetry and a measure of the oscillation 
  of the boundary with respect to the boundary of the corresponding Wulff shape. 
  \end{abstract}

\maketitle

\section{Introduction}
\subsection{The Wulff inequality and stability}
For $n\geq 2$, the well-known isoperimetric inequality states that 
$$P(E) \geq n|B|^{1/n}|E|^{1/n'},$$
$n' = n/(n-1)$, with equality if and only if $E$ is a translation or dilation of $B=\{ x \in \Rn : |x| <1\}$, the Euclidean unit ball in $\Rn$. The isoperimetric inequality holds for all sets of finite perimeter $E \subset \Rn$, with the perimeter of $E$ equal to
$ P(E) = \Hn(\pa^* E). $
Here, $\pa^*E$ is the reduced boundary of $E$; see Section~\ref{SOFP}, or \cite{maggi2012sets} for a more complete overview.

The anisotropic surface energy is a natural generalization of the notion of perimeter and has applications in modeling of equilibrium configurations for solid crystals (see \cite{Wulff1901, Herring1951, taylor1978}) and of phase transitions (see \cite{Gurtin1985}).
We introduce a \textit{surface tension} $f: \Rn \to [0, + \infty)$ to be a convex positively $1$-homogeneous function that is positive on $S^{n-1}.$
The corresponding (anisotropic) \textit{surface energy} of a set of finite perimeter $E \subset \Rn$ is defined by
$$\PP(E) = \int_{\partial^* E} f(\nu_E(x)) \,d\Hn(x).$$
Here, $\nu_E$ is the measure theoretic outer unit normal; see Section~\ref{SOFP}.
Just as the ball minimizes perimeter among sets of the same volume, as expressed by the isoperimetric inequality, the surface energy is uniquely minimized among sets of a given volume by translations and dilations of a fixed convex set $K$ determined by the surface tension. This set $K$ is given by
$$K = \underset{\nu \in S^{n-1}}\bigcap \{ x \in \Rn : x\cdot \nu <f(\nu)\} ,$$ 
and is known as the \textit{Wulff shape} of $\PP$. The minimality of the Wulff shape is expressed by the \textit{Wulff inequality}:
$$\PP(E) \geq \PP(K)\bigg(\frac{|E|}{|K|}\bigg)^{1/n'}= n |K|^{1/n}|E|^{1/n'},$$
with equality if and only if $E$ is a translation or dilation of $K$; see \cite{ taylor1978, milman1986asymptotic, fonseca1991wulff, fonseca1991uniqueness, dacorogna1992wulff, DGS92, brothers1994}.
In the case where the surface tension $f$ is constantly equal to one, $\PP(E)$ reduces to the perimeter $P(E)$ and the Wulff inequality reduces to the isoperimetric inequality.

Given a surface tension $f$, one may define the \textit{gauge function} $f_*:\Rn \to [0,+\infty)$
by 
$$f_*(x) = \sup\{ x \cdot \nu : f(\nu) \leq 1\}.$$  The gauge function provides another characterization of the Wulff shape:  $K = \{ x : f_*(x) <1\}.$

To quantify how far a set is from achieving equality in the Wulff inequality, we introduce the {\it anisotropic isoperimetric deficit}, or simply the {\it deficit}, of a set $E$, defined by
$$\delta_{\PP}(E) = \frac{\PP(E) }{n|K|^{1/n}|E|^{1/n'}} -1.$$
The deficit equals zero if and only if,  up to a set of measure zero, $E= x +rK$ for some $x\in\Rn$ and $r>0$. This quantity is invariant under translations and dilations of $E$, 
as well as modifications of $E$ by sets of measure zero. 

The optimality of the Wulff shape in the Wulff inequality naturally gives rise to the question of stability: does the deficit control the distance of a set from the Wulff shape? In other words, given a distance $d$ from the family $\{x+ rK : x\in \Rn, r>0 \}$, one wants to find inequalities of the form
\begin{equation}\label{stability}
\delta_{\PP}(E) \geq \omega( d(E)) ,
\end{equation}
where $\omega $ is a (possibly explicit) function such that $\omega(d(E)) \to 0^+$ as $d(E) \to 0^+$. Ideally, one hopes to find the function $\omega$ that provides the \textit{sharp} rate of decay. 
Such an inequality can be viewed as a \textit{quantitative} form of the Wulff inequality: by rearranging the deficit, \eqref{stability} becomes
$$ \PP(E) \geq \PP(K) \left(\frac{|E|}{|K|}\right)^{1/n'} + \omega( d(E)) n|K|^{1/n}|E|^{1/n'}.$$
In this way, $\omega(d(E))$ serves as a remainder term in the Wulff inequality.

A well studied distance is the 
\textit{asymmetry index}, $\alpha_{\PP}(E)$, defined by 
\begin{equation}\label{asym}
\alpha_{\PP}(E) = \min_{y \in \Rn} \left\{ \frac{|E\Delta (rK+y)|}{|E|} :\ |rK| = |E|\right\},
\end{equation}
where $E\Delta F =(E\setminus F) \cup( F \setminus E)$ is the symmetric difference of $E$ and $F$.
For the case $f$ constantly equal to one, the asymmetry index is known as the \textit{Fraenkel asymmetry}. The quantitative isoperimetric inequality
 with the Fraenkel asymmetry was proven in sharp form by Fusco, Maggi, and Pratelli in \cite{FMP08}.  
Using symmetrization techniques, they showed that if $E$ is a set of finite perimeter with $0<|E|<\infty$, then
\begin{equation}\label{FMPStatement}
\alpha_1(E)^2 \leq C(n) \delta_1 (E).
\end{equation}
Here and in the future, we use the notation $\delta_1$ and $\alpha_1$ for the deficit and asymmetry index corresponding to the perimeter.

Before this full proof of \eqref{FMPStatement} was given, several partial results were shown; see \cite{fuglede1989, Hall1992, HHW}. Another proof of \eqref{FMPStatement} was given in \cite{CiLe12}, introducing a technique known as the selection principle, where a penalization technique and the regularity theory for almost-minimizers of perimeter reduce the problem to the case shown in \cite{fuglede1989}.

Stability of the Wulff inequality was first addressed in \cite{Esposito2005}, without the sharp exponent. Figalli, Maggi, and Pratelli later proved the sharp analogue of \eqref{FMPStatement} for the Wulff inequality in \cite{FiMP10}, using techniques from optimal transport
  and Gromov's proof of the Wulff inequality. They showed that there exists a constant $C(n)$, independent of $f$, such that
  \begin{equation}\label{FiMPStatement}
  \alpha_{\PP}(E)^2 \leq C(n) \delta_{\PP}(E)
  \end{equation}
for any set of finite perimeter $E$ with $0<|E| < \infty$. In both (\ref{FMPStatement}) and (\ref{FiMPStatement}), the power $2$ is sharp. The constant $C(n)$ in \eqref{FiMPStatement} is explicit, a feature that is not shared with any other proofs of \eqref{FMPStatement}, \eqref{FuscoJulin}, or any of the quantitative inequalities  obtained in this paper.

In \cite{fuscojulin11}, Fusco and Julin proved a strong form of the quantitative isoperimetric inequality, improving \eqref{FMPStatement} by showing
\begin{equation}\label{FuscoJulin} 
\alpha_1 (E)^2 + \beta_1(E)^2 \leq C(n) \delta_1(E)
\end{equation}
for any set of finite perimeter $E$ with $0<|E| < \infty$, where the \textit{oscillation index} $\beta_1(E)$ is defined by
\begin{align}
\beta_1(E)& =
 \min_{y\in \Rn}\bigg\{
\bigg(
 \frac{1}{2n|B|^{1/n}|E|^{1/n'} }
 \int_{\partial^* E } \Big|\nu_E(x) -\nu_{B_r(y)}\Big(y + r\frac{x-y}{|x-y|}\Big)\Big|^2\, d\Hn (x)\bigg)^{1/2} : 
|B_r| = |E|\bigg\} \nonumber\\
\label{betaB} 
&= \min_{y\in \Rn}
\left( \frac{1}{n|B|^{1/n}|E|^{1/n'} }
 \int_{\partial^* E } 
1 - \frac{x-y}{|x-y|} \cdot \nu_E(x) \, d\Hn (x)\right)^{1/2} .
\end{align}
While the asymmetry index $\alpha_1(E)$ is an $L^1$ distance between $E$ and $B$, the oscillation index $\beta_1(E)$ quantifies the oscillation of $\pa^*E$ with respect to $\pa B$. In \cite[Proposition 1.2]{fuscojulin11}, $\beta_1(E)$ is shown to control $\alpha_1(E)$; see Proposition~\ref{poincare} for the analogous statement in the anisotropic case.
Once again, the power $2$ in (\ref{FuscoJulin}) is sharp for both $\alpha_1(E)$ and $\beta_1(E)$. 

Since \cite{fuscojulin11}, analogous strong form quantitative inequalities have been studied in several settings:
in Gauss space \cite{Eldan2015, BarBraJul14},
on the sphere \cite{BDuzFus13}, and
in hyperbolic $n$-space \cite{BDS2015}.
\subsection{Statements of the main theorems}

The goal of this paper is to address the question of what form the inequality $(\ref{FuscoJulin})$ takes in the case of the anisotropic surface tension $\PP$. In other words, we prove a \textit{strong form} of the quantitative Wulff inequality, improving \eqref{FiMPStatement} by adding a term to the left hand side that quantifies the oscillation of $\pa^*E$ with respect to $\pa K$. We define the \textit{$\PP$-oscillation index} by 
\begin{equation}\label{bpp}
\BPP(E) = \min_{y\in \Rn} 
\left( \frac{1}{n|K|^{1/n}|E|^{1/n'}}
 \int_{\partial^* E } f(\nu_E(x)) -\nu_E(x) \cdot \frac{x-y}{f_*(x-y) }\, d\Hn (x)\right)^{1/2}.
\end{equation}
The following theorem is a strong form of the quantitative Wulff inequality that holds for an arbitrary surface energy. 
\begin{theorem}\label{th1} There exists a constant $C$ depending only on $n$ such that 
\begin{equation}\label{statement1}
\alpha_{\PP}(E)^2 + \BPP(E)^{4n/(n+1)}\leq C \delta_{\PP}(E)
\end{equation}
for every set of finite perimeter $E$ with $0<|E|<\infty$.\end{theorem}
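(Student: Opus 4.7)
The plan is to argue by contradiction via a selection principle in the spirit of \cite{CiLe12} and \cite{fuscojulin11}, together with an approximation of $f$ by smooth, uniformly convex surface tensions and a Fuglede-type second-variation analysis on nearly-Wulff-shaped sets. Assume toward a contradiction that there is a sequence $\{E_k\}$ with $|E_k|=|K|$ and $\delta_\PP(E_k)\to 0$, yet $\alpha_\PP(E_k)^2+\BPP(E_k)^{4n/(n+1)} > k\,\delta_\PP(E_k)$. The Figalli--Maggi--Pratelli bound \eqref{FiMPStatement} forces $\alpha_\PP(E_k)\to 0$, so after translation $E_k\to K$ in $L^1$ and the failure must be driven by the $\BPP$ term.

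Next, I would apply a selection principle, replacing $E_k$ by minimizers $F_k$ of a penalized functional of the form
$$\PP(E)+\Lambda_k\bigl(\big||E|-|K|\big|+\mathrm{dist}_{L^1}(E,E_k)\bigr),$$
with $\Lambda_k$ chosen so that $F_k$ is a uniform $\Lambda$-almost-minimizer of $\PP$, still violates \eqref{statement1} (with slightly worse constants), and converges to $K$ in $L^1$. Standard arguments then reduce the problem to almost-minimizers of $\PP$ close to the Wulff shape.

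To access regularity theory one must work with a smooth, uniformly convex surface tension, so I would approximate $f$ by $f_\epsilon$ satisfying $\|f-f_\epsilon\|_\infty\leq\epsilon$, and denote by $\PP_\epsilon$ and $K_\epsilon$ the associated surface energy and Wulff shape. Almost-minimality of $F_k$ with respect to $\PP$ transfers up to controllable errors to $\PP_\epsilon$, and the classical $C^{1,\alpha}$ regularity theory for almost-minimizers of smooth uniformly convex surface energies allows one to write $\pa F_k$ as a normal graph
$$\{\,y+u_k(y)\,\nu_{K_\epsilon}(y):y\in\pa K_\epsilon\,\}$$
with $\|u_k\|_{C^1}$ small. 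A Taylor expansion of the $\PP_\epsilon$-perimeter about $\pa K_\epsilon$, combined with a Poincaré-type inequality on $\pa K_\epsilon$, then yields the sharp quadratic estimate $\beta_{\PP_\epsilon}(F_k)^2 \leq C\,\delta_{\PP_\epsilon}(F_k)$.

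The main obstacle is transferring this sharp bound from $\PP_\epsilon$ back to $\PP$. One has comparisons of the form $|\PP(E)-\PP_\epsilon(E)|\leq\epsilon\,\Hn(\pa^*E)$ together with an analogous estimate relating $\BPP(E)^2$ to $\beta_{\PP_\epsilon}(E)^2$; however, the constant appearing in the Fuglede step degenerates with the uniform-convexity modulus of $f_\epsilon$ as $\epsilon\to 0$. Balancing the quality of the approximation against the strength of the uniform convexity forces a careful optimization of $\epsilon$ as a power of $\delta_\PP(F_k)$, and it is precisely this trade-off that produces the weaker exponent $4n/(n+1)$ in place of the $2$ valid when $f$ is itself smooth and uniformly convex. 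Making this optimization quantitatively rigorous, while verifying that all regularity estimates on $F_k$ remain uniform as $\epsilon\to 0$, is the heart of the proof.
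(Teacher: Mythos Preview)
Your overall strategy---contradiction plus selection principle---matches the paper, but the mechanism you propose for extracting regularity and for producing the exponent $4n/(n+1)$ is substantially different from what the paper does, and contains a genuine gap.

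\medskip

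\textbf{What the paper actually does.} The paper never approximates $f$ by smooth, uniformly convex surface tensions, and never invokes $C^{1,\alpha}$ regularity or a Fuglede-type expansion for the general case. Instead it uses only \emph{uniform density estimates} for minimizers of the penalized functional (Lemma~\ref{UDE}), which hold for an arbitrary surface tension. After the selection principle, one has $F_j\to K$ in $L^1$ with $F_j$ satisfying density estimates. Writing
\[
\BPP(F_j)^2 \le \delta_\PP(F_j) + \frac{n-1}{n}\int_{F_j\Delta K}\Big|\frac{1}{f_*(x)}-1\Big|\,dx,
\]
one bounds the integrand pointwise: for $x\in F_j\setminus K$ the density estimate gives $|f_*(x)-1|^n \le C|F_j\Delta K|\le C\delta_\PP(F_j)^{1/2}$ by \eqref{FiMPStatement}, hence $|1/f_*(x)-1|\le C\delta_\PP(F_j)^{1/2n}$. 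Integrating and using $|F_j\Delta K|\le C\delta_\PP(F_j)^{1/2}$ once more yields
\[
\BPP(F_j)^2 \le C\delta_\PP(F_j)^{1/2+1/2n}=C\delta_\PP(F_j)^{(n+1)/2n},
\]
which is exactly $\BPP^{4n/(n+1)}\le C\delta_\PP$. So the exponent arises from a completely elementary computation: one factor of $\delta^{1/2}$ from the asymmetry bound \eqref{FiMPStatement} and one factor of $\delta^{1/2n}$ from converting $L^1$ closeness into $L^\infty$ closeness via density estimates.

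\medskip

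\textbf{The gap in your approach.} Your scheme requires quantifying, simultaneously and uniformly, how (i) the $\Lambda$-minimality of $F_k$ for $\PP$ transfers to $\PP_\epsilon$, (ii) the $C^{1,\alpha}$ estimates for $\PP_\epsilon$-almost-minimizers depend on the ellipticity of $f_\epsilon$, and (iii) the constant in the Fuglede inequality for $\PP_\epsilon$ depends on $\lambda_\epsilon$ and $\|\nabla^2 f_\epsilon\|_{C^0}$. All three dependencies degenerate as $\epsilon\to 0$, and there is no a priori reason the optimal balance should produce the specific power $(n+1)/4n$. You acknowledge this optimization is ``the heart of the proof'' but do not indicate how to carry it out; absent that, the argument is incomplete. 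In particular, the Fuglede constant in the paper (Proposition~\ref{Fug-type}) depends on $\lambda$ and on $\|\nabla^2 f\|_{C^0}$ in a way that is not tracked explicitly, so extracting a quantitative dependence usable in your trade-off would require substantial additional work.

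\medskip

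\textbf{A secondary issue.} Your penalized functional involves $\mathrm{dist}_{L^1}(E,E_k)$, whereas the paper penalizes $|\BPP(E)^2-\e_j^2|$ directly (equation \eqref{Q}). The latter is what guarantees that the replacement sets $F_j$ retain $\BPP(F_j)$ comparable to $\BPP(E_j)$ (see \eqref{B}); with an $L^1$ penalization this comparability is not automatic and would need a separate argument.
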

As in (\ref{FiMPStatement}), the constant is independent of $f$. We expect that, as in (\ref{FuscoJulin}), the sharp exponent for $\BPP(E)$ in \eqref{statement1}  should be $2$. With additional assumptions on the surface tension $f$, we prove the stability inequality in sharp form for two special cases.
\begin{definition}\label{lellipt}
A surface tension $f$ is \textit{$\lambda$-elliptic}, $\lambda>0$, if $f\in C^2(\Rn \setminus \{0\})$ and 
$$(\nabla^2 f(\nu)\tau)\cdot \tau \geq \frac{\lambda}{|\nu| } \left| \tau - \Big(\tau\cdot \frac{\nu}{|\nu|}\Big)\frac{\nu}{|\nu|} \right|^2$$
for $\nu, \tau  \in \Rn$ with 
$\nu\neq 0$.
\end{definition}
This is a uniform ellipticity assumption for $\nabla^2 f(\nu)$ in the tangential directions to $\nu$. If $f$ is $\lambda$-elliptic, then the corresponding Wulff shape $K$ is of class $C^2 $ and uniformly convex (see \cite{schneider2013convex}, page $111$). When $\PP$ is a surface energy corresponding to a $\lambda$-elliptic surface tension, the following sharp result holds. The constant depends on $\m $ and $\M$, a pair of constants defined in \eqref{mM} that describe how much $f$ stretches and shrinks unit-length vectors.
\begin{theorem} \label{Smooth}
Suppose $f$ is a $\lambda$-elliptic surface tension with corresponding surface energy $\PP$. There exists a constant $C$ depending on $n, \lambda, \m/ \M,$ and $\|\nabla^2 f\|_{C^0(\pa K)}$ such that 
\begin{equation}\label{smooth12}\alpha_{\PP}(E)^2 +\BPP(E)^2 \leq C \delta_{\PP}(E)
\end{equation}
for any set of finite perimeter $E$ with $0<|E|<\infty$.
\end{theorem}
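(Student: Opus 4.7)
The plan is to follow the contradiction/selection principle strategy developed by Cicalese--Leonardi and extended by Acerbi--Fusco--Morini, adapted to the anisotropic setting with the oscillation term. Suppose \eqref{smooth12} fails: then there is a sequence $E_k$ with $|E_k| = |K|$, $\alpha_\PP(E_k) + \BPP(E_k) \to 0$, and
\[
\alpha_\PP(E_k)^2 + \BPP(E_k)^2 > k\,\delta_\PP(E_k).
\]
Via the quantitative Wulff inequality \eqref{FiMPStatement} one already has $\delta_\PP(E_k) \to 0$, so $E_k \to K$ in $L^1$ (after translation). The selection principle replaces $E_k$ by a sequence $F_k$ of minimizers of a penalized functional of the form
\[
\PP(F) + \Lambda_k \bigl| |F| - |K| \bigr| - \eta_k \bigl(\alpha_\PP(F)^2 + \BPP(F)^2\bigr)^{1/2}
\]
(with $\Lambda_k$ large and $\eta_k$ chosen to preserve the contradiction inequality), so that $F_k$ inherits $F_k \to K$ in $L^1$ and, more importantly, is a $\Lambda$-minimizer of the anisotropic perimeter up to a perturbation of order $o(1)$.

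Because $f$ is $\lambda$-elliptic, the anisotropic regularity theory (Almgren, Schoen--Simon--Almgren, Duzaar--Steffen; see also the exposition in \cite{maggi2012sets}) applies: $\Lambda$-minimizers of $\PP$ converging to the uniformly convex $C^2$ Wulff shape $K$ converge in $C^{1,\alpha}$. Consequently, for $k$ large each $F_k$ is a normal graph over $\partial K$ of a function $u_k \in C^{1,\alpha}(\partial K)$ with $\|u_k\|_{C^1} \to 0$. At this point the problem is reduced to proving \eqref{smooth12} on the class of small $C^1$ normal graphs over $\partial K$, i.e., a \emph{Fuglede-type} lemma in the anisotropic setting.

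The Fuglede step is where $\lambda$-ellipticity is used essentially. For a graph $F_u = \{x + u(x)\nu_K(x) : x \in \pa K\}$ with $|u|_{C^1} \ll 1$ and $|F_u| = |K|$, one expands $\PP(F_u)$ to second order in $u$. The leading quadratic contribution has the schematic form
\[
\PP(F_u) - \PP(K) \gtrsim \int_{\pa K} \bigl(\nabla^2 f(\nu_K)\,\nabla_\tau u\bigr)\cdot \nabla_\tau u \, d\Hn - \int_{\pa K} Q(x)\, u^2\, d\Hn + o(\|u\|_{H^1}^2),
\]
where the first term is coercive in the tangential gradient thanks to Definition~\ref{lellipt}, and the zeroth-order term is controlled using the volume constraint and a translation normalization (to kill the first spherical-harmonic-analog modes on $\pa K$, analogous to barycenter fixing in Fuglede's argument). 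A parallel expansion shows
\[
\BPP(F_u)^2 \leq C \int_{\pa K} |\nabla_\tau u|^2 + u^2 \, d\Hn,
\]
since the integrand $f(\nu_E) - \nu_E \cdot (x-y)/f_*(x-y)$ vanishes to second order on $\pa K$. Combining these with the anisotropic Poincaré-type inequality $\alpha_\PP(E) \leq C\BPP(E)$ (Proposition~\ref{poincare}) closes the argument and contradicts the assumed failure of \eqref{smooth12}.

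The main obstacle is the Fuglede computation itself: in the isotropic case one exploits the umbilicity of $\pa B$ and decomposes $u$ in spherical harmonics to handle the indefinite zeroth-order term. Here $\pa K$ is only uniformly convex and $C^2$, so spherical harmonics are unavailable; instead one must establish a \emph{weighted} spectral gap on $\pa K$, using $\lambda$-ellipticity to get coercivity of the tangential Dirichlet form, $\|\nabla^2 f\|_{C^0(\pa K)}$ to control the perturbation terms, and the ratio $\m/\M$ to normalize between $f$ and the Euclidean metric. A careful choice of the translation $y$ removes the neutral first-order modes (corresponding to translations of $K$), and the volume constraint removes the constant mode. A secondary technical point is to ensure that the selection-principle minimizers $F_k$ remain inside the regularity regime uniformly in $k$; this requires showing that the penalization parameters $\Lambda_k$ and $\eta_k$ can be chosen so that $F_k$ is $\omega$-minimizing with $\omega \to 0$, which follows from standard comparisons with balls and from $\alpha_\PP(F_k), \BPP(F_k) \to 0$.
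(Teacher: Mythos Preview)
Your overall architecture (contradiction, selection principle, regularity for $\lambda$-elliptic almost-minimizers, Fuglede-type reduction) matches the paper's. However, there are two places where the proposal diverges from what actually works, and one of them is a genuine gap.

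\textbf{The penalized functional.} You propose minimizing $\PP(F)+\Lambda_k\big||F|-|K|\big|-\eta_k(\alpha_\PP(F)^2+\BPP(F)^2)^{1/2}$. A subtracted term of this form is delicate here because $\BPP$ scales like the surface energy itself: existence of minimizers and, more importantly, the verification that minimizers are $(\Lambda,r_0)$-minimizers of $\PP$ both require controlling the variation of the $\BPP$ term by $\PP$-type quantities with a coefficient strictly less than $1$. The paper instead uses $Q_j(E)=\PP(E)+\tfrac{|K|\m}{8\M}\big|\BPP(E)^2-\e_j^2\big|+\Lambda\big||E|-|K|\big|$ with $\e_j=\BPP(E_j)$. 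The small coefficient $\tfrac{\m}{8\M}$ and the absolute-value form are exactly what make the lower-semicontinuity (Lemma~\ref{LSC}), the density estimates (Lemma~\ref{UDE}), and the $(\Lambda,r_0)$-minimality go through; your functional would need a comparable analysis that you have not supplied.

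\textbf{The Fuglede step --- this is the real gap.} You correctly identify that the zeroth-order term $-\int_{\pa K} H_K u^2$ is the obstacle, and propose to handle it by ``establishing a weighted spectral gap on $\pa K$'' and using translation to ``remove the neutral first-order modes''. But this is a restatement of the difficulty, not a resolution: on a general uniformly convex $C^2$ boundary there is no explicit basis, and there is no a priori reason the barycenter constraint should be exactly orthogonal to the relevant low eigenspace of the second-variation operator. The paper's key idea is to \emph{avoid spectral analysis entirely}. It uses the already-proven sharp quantitative Wulff inequality \eqref{FiMPStatement} together with the barycenter constraint to obtain $|E\Delta K|\le C\delta_\PP(E)^{1/2}$, hence $\big(\int_{\pa K}|u|\big)^2\le C\delta_\PP(E)$; then Nash's inequality on $\pa K$ (a consequence of the Sobolev inequality there) interpolates to bound $\int_{\pa K}u^2$ by an arbitrarily small multiple of $\int_{\pa K}|\nabla u|^2$ plus a large multiple of $\big(\int_{\pa K}|u|\big)^2$. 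Combined with $\lambda$-ellipticity this absorbs the bad zeroth-order term and yields $\|u\|_{H^1(\pa K)}^2\le C\delta_\PP(E)$ directly. Without this device (or an honest proof of the spectral gap you invoke), your argument does not close.
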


The second case where we obtain the strong form quantitative Wulff inequality with the sharp power is the case of a crystalline surface tension. 
\begin{definition} \label{crystals}
A surface tension $f$ is \textit{crystalline} if it is the maximum of finitely many linear functions, in other words, if there exists a finite set $\{x_j\}_{j=1}^N \subset \Rn \setminus \{0\}, N\in \mathbb{N}$, such that 
$$f(\nu) = \underset{1\leq j\leq N}{\max} \{ x_j \cdot \nu\}\ \ \mbox{ 	} \text{ for all } \nu \in S^{n-1}.$$
\end{definition}
If $f$ is a crystalline surface tension, then the corresponding Wulff shape $K$ is a convex polyhedron. 
In dimension two, when $f$ is a crystalline surface tension, we prove the following sharp quantitative Wulff inequality.

\begin{theorem}\label{dim2} 
Let $n=2$ and suppose $f$ is a crystalline surface tension with corresponding surface energy $\PP$. There exists a constant $C$ depending on $f$ such that 
$$\alpha_{\PP}(E)^2 +\BPP(E)^2 \leq C \delta_{\PP}(E)$$
for any set of finite perimeter $E$ with $0<|E|<\infty$.
\end{theorem}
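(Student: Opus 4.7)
My plan is to argue by contradiction, combining the selection principle of \cite{CiLe12} and \cite{fuscojulin11} with a structural rigidity result specific to convex polygons in two dimensions. Theorem~\ref{th1} in the case $n=2$ already yields $\alpha_{\PP}(E)^2 + \BPP(E)^{8/3} \leq C\delta_{\PP}(E)$, so the content of Theorem~\ref{dim2} is to upgrade the exponent $8/3$ on the oscillation term to the sharp value $2$. Since the integrand in $\BPP(E)^2$ is controlled by $f(\nu_E)$, one easily gets $\BPP(E)^2 \leq C(1+\delta_{\PP}(E))$, so it is enough to establish $\BPP(E)^2 \leq C\delta_{\PP}(E)$ for sets with $|E|=|K|$ and $\delta_{\PP}(E)\leq \delta_0$, for some small $\delta_0=\delta_0(f)$.

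Assume by contradiction that a sequence $E_h$ exists with $|E_h|=|K|$, $\BPP(E_h)\to 0$, and $\delta_{\PP}(E_h)/\BPP(E_h)^2 \to 0$. Proposition~\ref{poincare} together with $\delta_{\PP}(E_h)\to 0$ gives $\alpha_{\PP}(E_h)\to 0$, so up to translation $E_h\to K$ in $L^1$. The selection principle then replaces $E_h$ by a new sequence $F_h$ with the same bad scaling $\delta_{\PP}(F_h)/\BPP(F_h)^2\to 0$ but which is additionally a $\Lambda$-almost-minimizer of $\PP$ at volume $|K|$, obtained by minimizing a penalized functional combining $\PP(F)$ with a volume penalty, a penalty forcing $L^1$-closeness to $K$, and a term penalizing deviations of $\BPP(F)^2$ from $\BPP(E_h)^2$. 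One must of course verify existence of minimizers and transfer of the rescaled behavior from $E_h$ to $F_h$.

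The heart of the argument, and where the assumptions $n=2$ and $f$ crystalline enter, is a rigidity statement: every $\Lambda$-almost-minimizer $F$ of $\PP$ sufficiently $L^1$-close to $K$ must be a convex polygon whose outer normals lie in $\{\nu_1,\dots,\nu_N\}$, the edge-normals of $K$. In the elliptic setting of Theorem~\ref{Smooth} this role is played by $C^{1,\alpha}$ regularity; in the crystalline case one must instead use 2D-specific tools, most plausibly an anisotropic convexification (to replace $F$ by its convex hull without increasing $\PP$ by more than a controlled amount and with commensurate control on $\BPP$), followed by a density-type argument forcing each blow-up of $\pa F$ to be a half-plane with normal in $\{\nu_j\}$. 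Once such rigidity is in hand, $F_h$ is parametrized by the $N$ signed displacements $\epsilon_{j,h}$ of its edges along the directions $\nu_j$; after quotienting out the two translation degrees of freedom in the infimum defining $\BPP$, a Taylor expansion at $K$ shows that both $\delta_{\PP}(F_h)$ and $\BPP(F_h)^2$ are comparable to $|\vec{\epsilon}_h|^2$, giving the desired contradiction.

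The main obstacle is clearly the rigidity step. The absence of interior regularity theory for crystalline quasi-minimizers means that the polygonal structure of $F_h$ must be extracted directly from the variational problem, via a delicate combination of convexification, first/second variation estimates for $\PP$ against competitors with finitely many admissible normals, and planar topology. It is here that the dependence of the constant $C$ on $f$ enters naturally: through the number of edges $N$ of $K$ and the minimum angular gap between consecutive normals $\nu_j$, which control both the convexification bound and the quadratic constants in the final finite-dimensional expansion.
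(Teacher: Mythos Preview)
Your outline is essentially the paper's strategy: contradiction plus selection principle, rigidity forcing the selected competitors $F_h$ to be convex polygons parallel to $K$, and then a finite-dimensional computation. Two points deserve correction, however.

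First, the rigidity step is not something one must invent. The paper imports it directly from Figalli--Maggi \cite[Theorem~7]{figallimaggi11}: in dimension two, if $f$ is crystalline and $E$ is a \emph{volume constrained $(\varepsilon,\eta)$-minimizer} of $\PP$ with $\varepsilon$ small and $(1-\eta/2)K\subset E\subset (1+\eta/2)K$, then $E$ is a convex polygon with normals in $\{\nu_i\}$. Your formulation ``every $\Lambda$-almost-minimizer sufficiently $L^1$-close to $K$ is a convex polygon'' is too strong as stated; what one needs is almost-minimality with a \emph{small} multiplicative constant on $|E\Delta F|$, not merely a fixed $\Lambda$. This is precisely where the paper's argument is delicate: rather than proving only $(\Lambda,r_0)$-minimality of $F_j$, it shows that for every $\varepsilon>0$ there is $\eta_0>0$ so that $F_j$ is a volume constrained $(\varepsilon,\eta_0)$-minimizer for large $j$. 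The crucial estimate is that $|\gamma_{\PP}(E)-\gamma_{\PP}(F_j)|\leq C\eta_1|E\Delta F_j|$ when both $E$ and $F_j$ are trapped in a thin $K$-annulus and their centers are near the origin; this Lipschitz bound with \emph{small} constant is what allows one to absorb the $\beta_{\PP}$-penalty into an arbitrarily small almost-minimality constant, rather than just a bounded one. Your penalized functional with an $L^1$-closeness term is not needed and would not help here.

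Second, the finite-dimensional endgame in the paper (Proposition~\ref{crystalprop}) is not a soft Taylor expansion asserting $\delta_{\PP}(F_h)\approx|\vec\varepsilon_h|^2\approx\BPP(F_h)^2$. Instead, one writes $\BPP(E)^2\leq \delta_{\PP}(E)+\frac{1}{2|K|}(A-B)$ with $A=\int_{K\setminus E}f_*^{-1}$ and $B=\int_{E\setminus K}f_*^{-1}$, evaluates $A-B$ via the anisotropic coarea formula slice by slice, and shows that the leading term equals exactly $2|K|\delta_{\PP}(E)$ while all remainders are $O(\max_i|\varepsilon_i|^2)\leq C\delta_{\PP}(E)$ thanks to the sharp quantitative Wulff inequality \eqref{FiMPStatement}. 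Your comparability claim is in fact a consequence of this computation combined with Proposition~\ref{poincare}, but it is not the route the paper takes.
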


Some remarks about the definition of the $\PP$-oscillation index $\beta_{\PP}$ in \eqref{bpp} are in order.
The oscillation index $\beta_1(E)$ in \eqref{betaB} measures oscillation of the reduced boundary of a set $E$ with respect to the boundary of the ball. Indeed, the quantity $\beta_1(E)$ is the integral over $\partial^*E$ of the Cauchy-Schwarz deficit $1 - \frac{x}{|x|}\cdot\nu_E(x)$, which quantifies in a Euclidean sense how closely $\nu_E(x)$ aligns with $\frac{x}{|x|}$.

To understand \eqref{bpp}, we remark that $f$ and $f_*$ are dual in the sense that they yield a Cauchy-Schwarz-type inequality called the Fenchel inequality, which states that 
$$\nu_E(x) \cdot \frac{x}{f_*(x)} \leq f(\nu_E(x)).$$ 
 Just as the oscillation index $\beta_1(E)$ quantifies the overall Cauchy-Schwarz deficit between $\frac{x}{|x|}$ and $\nu_E(x)$, the term $\BPP(E)$ is an integral along $\pa^*E$ of the  deficit in the Fenchel inequality. In Section~\ref{Subsection:anisotropies}, we show that $f(\nu_E(x)) = y \cdot \nu_E(x)$ for $y\in\partial K$ if and only if $y$ is a point on $\partial K$ where $\nu_E(x)$ is normal to a supporting hyperplane of $K$ at $y$. In this way, $\BPP(E)$ quantifies how much normal vectors of $E$ align with corresponding normal vectors of $K$, and therefore provides a measure of the oscillation of the reduced boundary of $E$ with respect to the boundary of $K$. Note that in the case $f$ constantly equal to one,
$\BPP$ agrees with $\beta_1$.

It is not immediately clear that \eqref{bpp} is the appropriate analogue of \eqref{betaB} in the anisotropic case. Noting that $x \mapsto (x-y)/f_*(x-y)$ is the radial projection of $\Rn \setminus \{0\}$ onto $\partial K + y$, one may initially want to consider the term
\begin{align}
\BPP^*(E)&
= \min_{y \in R^n} 
\bigg( \frac{1}{2n |K|^{1/n}|E|^{1/n'} }\int_{\partial^* E}
\Big|\nu_E(x) - \nu_{K}\Big(\frac{x-y}{f_*(x-y)}\Big) \Big|^2\, d\Hn(x) \bigg)^{1/2}\nonumber \\
& = \min_{y \in R^n} 
\bigg( \frac{1}{n|K|^{1/n}|E|^{1/n'} }\int_{\partial^* E}
1-\nu_E(x) \cdot \nu_K\Big(\frac{x-y}{f_*(x-y)}\Big)\, d\Hn(x) \bigg)^{1/2}.\label{betastar}
\end{align}
However, in Section~\ref{other} we see that such a term does not admit any stability result for general $\PP$. Indeed, in Example~\ref{crystalexample}, we construct a sequence of crystalline surface tensions that show that there does not exist a power $\sigma$ such that 
\begin{equation}\label{noway1}
\beta^*_{\PP}(E)^{\sigma} \leq C(n,f) \delta_{\PP}(E)
\end{equation}
for all sets $E$ of finite perimeter with $0<|E|<\infty$ and for all $\PP$. Furthermore, Example~\ref{pexample} shows that even if we restrict our attention to surface energies which are $\g$-$\lambda$ convex, a weaker notion of $\lambda$-ellipticity introduced in Definition~\ref{gammalambda}, an inequality of the form \eqref{noway1} cannot hold with an exponent less than $\sigma = 4.$
The examples in Section~\ref{other} illustrate the fact that, in the anisotropic case, measuring the alignment of normal vectors in a Euclidean sense is not suitable for obtaining a stability inequality for general $\PP$; it is essential to account for the anisotropy in this measurement. The $\PP$-oscillation index $\beta_{\PP}(E)$ in \eqref{bpp} does exactly this. 
 
In the positive direction, when the surface tension $f$ is $\g$-$\lambda$ convex, $\beta^*_{\PP}(E)$ is controlled by $\BPP(E)$. As one expects from Example~\ref{pexample}, the exponent in this bound depends on the $\g$-$\lambda$ convexity of $f$. We now define $\g$-$\lambda$ convexity.
\begin{definition}\label{gammalambda}
Let $f:\Rn \to \mathbb{R}$ be a nonnegative, convex, 
positively one-homogeneous function. 
 Then we say that $f$ is \textit{$\g$-$\lambda$ convex} for $\g \geq 0 , \lambda>0$ if
\begin{equation}\label{gamma lambda}f(\nu + \tau ) + f(\nu- \tau) -2f(\nu)
\geq \frac{\lambda}{|\nu|}
\left| \tau -\Big(\tau \cdot \frac{\nu}{|\nu|}\Big)\frac{\nu}{|\nu|}\right|^{2+\g}
\end{equation}
for all $\nu, \tau \in \Rn$ such that $\nu \neq 0$.
\end{definition}
Dividing \eqref{gamma lambda} by $\tau^2$, the left hand side  gives a second difference quotient of $f$. While $\lambda$-ellipticity assumes that $f\in C^2(\Rn \setminus \{0\})$ and that its second derivatives in directions $\tau$ that are orthogonal to $\nu$ are bounded from below, $\gamma$-$\lambda$ convexity only assumes that the second difference quotients in these directions have a bound from below that degenerates as $\tau$ goes to $0$. Of course, a $0$-$\lambda$ convex surface tension $f$ with $f\in C^2(\Rn\setminus \{0\})$ is $\lambda$-elliptic.
The $\ell^p$ norms $f_p (x) = (\sum_{i=1}^n |x_i|^p)^{1/p}$ for $p \in (1, \infty)$ are examples of $\gamma$-$\lambda$ convex surface tensions; see Section~\ref{other}.
When $f$ is a $\g$-$\lambda$ convex surface tension, the following theorem shows that $\BPP$ controls $\beta^*_{\PP}$. 
\begin{theorem}\label{th2} 
Let $f$ be a $\g$-$\lambda$ convex surface tension.
Then there exists a constant $C$ depending on $ \g, \lambda,$ and $m_{\PP}/M_{\PP}$ such that 
$$\BPP^*(E)^{(2+\g)/2} \leq C \left(\frac{P(E)}{n|K|^{1/n}|E|^{1/n'}}\right)^{\g/4} \beta_{\PP}(E).$$
for any set of finite perimeter $E$ with $0<|E|<\infty$. 
\end{theorem}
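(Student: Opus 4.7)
The plan is to reduce Theorem \ref{th2} to a pointwise comparison between the Fenchel deficit $f(\nu^*) - \nu^* \cdot y$ and the Euclidean deficit $|\nu^* - \nu_K(y)|^2$, and then pass from the $L^{2+\g}$ norm to the $L^2$ norm via H\"{o}lder's inequality, with $P(E)$ playing the role of total mass.

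\textbf{Pointwise inequality (the main step).} I will show that for every $y \in \pa K$ with outer unit normal $\nu := \nu_K(y)$ and every $\nu^* \in S^{n-1}$,
$$|\nu^* - \nu|^{2+\g} \leq C\,\bigl(f(\nu^*) - \nu^* \cdot y\bigr),$$
with $C = C(\g,\lambda,\m/\M)$. (Note that $\g$-$\lambda$ convexity forces $f$ to be strictly convex in tangential directions, so $K$ is smooth and $\nu_K(y)$ is well-defined.) The trick is to test the $\g$-$\lambda$ convexity inequality at the midpoint $\nu' := (\nu + \nu^*)/2$ with increment $\tau := (\nu^* - \nu)/2$. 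Because $|\nu| = |\nu^*| = 1$,
$$\tau \cdot \nu' = \tfrac{1}{4}(\nu^* - \nu)\cdot(\nu^* + \nu) = \tfrac{1}{4}(|\nu^*|^2 - |\nu|^2) = 0,$$
so $\tau$ already lies in the tangent hyperplane at $\nu'$ and $|\tau_\perp| = |\tau| = |\nu^* - \nu|/2$. Setting $g(\eta) := f(\eta) - \eta \cdot y \geq 0$, which inherits $\g$-$\lambda$ convexity from $f$ (the linear term drops out of second differences) and satisfies $g(\nu) = 0$ via the Fenchel identity $f(\nu) = \nu \cdot y$, the $\g$-$\lambda$ inequality reads
$$g(\nu^*) + g(\nu) - 2 g(\nu') \geq \frac{\lambda}{|\nu'|}\Bigl(\frac{|\nu^* - \nu|}{2}\Bigr)^{\!2+\g}.$$
Using $g(\nu) = 0$, $g(\nu') \geq 0$, and $|\nu'| \leq 1$ produces $g(\nu^*) \geq (\lambda/2^{2+\g})|\nu^* - \nu|^{2+\g}$ whenever $|\nu^* - \nu|$ is bounded away from $2$; the antipodal range $\nu^* \approx -\nu$ is handled separately via the elementary bound $g(-\nu) = f(\nu) + f(-\nu) \geq 2\m$ (combined with $|\nu^* - \nu|^{2+\g} \leq 2^{2+\g}$), which is the source of the $\m/\M$ dependence.

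\textbf{Integration and H\"older.} Let $y_0$ be a minimizer in the definition of $\BPP(E)$ and set $y_x := (x - y_0)/f_*(x - y_0) \in \pa K$ for $x \in \pa^* E$. Applying the pointwise bound with $\nu^* = \nu_E(x)$ and $y = y_x$, then integrating,
$$\int_{\pa^* E} |\nu_E(x) - \nu_K(y_x)|^{2+\g}\,d\Hn(x) \leq C\, n |K|^{1/n} |E|^{1/n'}\, \BPP(E)^2.$$
H\"older's inequality with exponents $(2+\g)/2$ and $(2+\g)/\g$, together with $1 - \nu_E(x)\cdot\nu_K(y_x) = \tfrac{1}{2}|\nu_E(x) - \nu_K(y_x)|^2$ and the minimality in the definition of $\BPP^*(E)$, yields
$$\BPP^*(E)^{2+\g} \leq C' \Bigl(\frac{P(E)}{n|K|^{1/n}|E|^{1/n'}}\Bigr)^{\!\g/2} \BPP(E)^2,$$
which is the theorem after taking a square root.

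\textbf{Main obstacle.} The essential insight is the pointwise step: choosing $\nu'$ and $\tau$ as the midpoint and half-difference of two unit vectors produces $\tau \perp \nu'$ automatically, so the $\g$-$\lambda$ convexity inequality---which in its raw form only controls the symmetric sum $g(\nu' + \tau) + g(\nu' - \tau)$---collapses to a genuine lower bound on $g(\nu^*)$ alone, because $g(\nu' - \tau) = g(\nu) = 0$ by Fenchel. Once this is secured, the integration and H\"{o}lder interpolation are routine, with only bookkeeping of the normalization $n|K|^{1/n}|E|^{1/n'}$ to convert the resulting estimate into the stated form.
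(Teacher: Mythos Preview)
Your proof is correct and follows essentially the same strategy as the paper's: a pointwise comparison of the Fenchel deficit against $|\nu_E-\nu_K|^{2+\g}$ obtained from the $\g$-$\lambda$ convexity condition, followed by H\"older's inequality to pass from the $(2+\g)$-norm to the $2$-norm. The integration/H\"older step is identical.

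The one genuine difference lies in how you set up the pointwise step. The paper first records a general lemma (Lemma~\ref{lem5}) giving $f(\nu+\tau)\geq f(\nu)+\nabla f(\nu)\cdot\tau+\frac{\lambda}{2^{2+\g}|\nu|}|\tau_\perp|^{2+\g}$, with the projection taken onto $\nu^\perp$, and applies it with $\nu=w=\nu_K(y)$, $\tau=\nu_E-w$. This yields $G(\nu_E)\geq c\,(1-(\nu_E\cdot w)^2)^{(2+\g)/2}$, which degenerates at the antipode $\nu_E=-w$ and therefore requires a separate geometric argument (using $\m/\M$) to control that regime. You instead apply the $\g$-$\lambda$ condition directly at the midpoint $\nu'=(\nu+\nu^*)/2$ with $\tau=(\nu^*-\nu)/2$; the orthogonality $\tau\perp\nu'$ is then automatic, so the full $|\tau|=|\nu^*-\nu|/2$ appears with no projection loss. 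Since $|\nu'|\leq 1$ gives $1/|\nu'|\geq 1$, your bound $g(\nu^*)\geq(\lambda/2^{2+\g})|\nu^*-\nu|^{2+\g}$ in fact holds for every $\nu^*\neq-\nu$ and extends to $\nu^*=-\nu$ by continuity; your separate antipodal discussion is therefore not actually needed, and your route is slightly cleaner than the paper's. The tradeoff is that the paper's Lemma~\ref{lem5} is stated as a standalone first-order inequality for general $\nu,\tau$, which may be of independent use.
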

As in Theorem~\ref{Smooth}, the constant depends on $\m $ and $\M$ which are defined in \eqref{mM}.
As an immediate consequence of Theorem~\ref{th2}, Theorem~\ref{th1}, and Theorem~\ref{Smooth}, we have the following result. 
\begin{corollary}\label{cor1} 
If $f$ is a $\g$-$\lambda$ convex surface tension, then there exists a constant $C$ depending on $n, \g, \lambda,$ and $ m_{\PP}/M_{\PP}$ such that 
$$
\alpha_{\PP}(E)^2 + \BPP^*(E)^{\sigma} \leq C  \left(\frac{P(E)}{n|K|^{1/n}|E|^{1/n'}}\right)^{\g n/(n+1)} \delta_{\PP}(E)$$
for any set of finite perimeter $E$ with $0<|E|<\infty$, where $\sigma=2n(2+\g)/(n+1)$. 

If $f$ is a $\lambda$-elliptic surface tension, then there exists a constant $C$ depending on $n, \g, \lambda, m_{\PP}/M_{\PP},$ and $\| \nabla^2 f\|_{C^0(\pa K)}$ such that 
$$
\alpha_{\PP}(E)^2 + \BPP^*(E)^{2} \leq C \delta_{\PP}(E)$$
for any set of finite perimeter $E$ with $0<|E|<\infty$.
\end{corollary}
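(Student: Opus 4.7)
\textbf{Proof plan for Corollary~\ref{cor1}.}
The plan is to combine Theorems~\ref{th1}, \ref{Smooth}, and \ref{th2} algebraically, with Theorem~\ref{th2} serving as a bridge that converts bounds on $\BPP(E)$ into bounds on $\BPP^*(E)$.

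For the $\g$-$\lambda$ convex assertion, I would raise the inequality of Theorem~\ref{th2},
$$\BPP^*(E)^{(2+\g)/2}\leq C\left(\frac{P(E)}{n|K|^{1/n}|E|^{1/n'}}\right)^{\g/4}\BPP(E),$$
to the power $4n/(n+1)$. The left side becomes $\BPP^*(E)^{\sigma}$ with $\sigma=2n(2+\g)/(n+1)$, the perimeter-ratio factor acquires the exponent $\g n/(n+1)$, and $\BPP(E)$ acquires the exponent $4n/(n+1)$. Applying Theorem~\ref{th1} to bound $\BPP(E)^{4n/(n+1)}\leq C\delta_{\PP}(E)$ yields the desired estimate for $\BPP^*(E)^{\sigma}$. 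The companion bound $\alpha_{\PP}(E)^2\leq C\delta_{\PP}(E)$ is immediate from Theorem~\ref{th1}; to merge the two estimates into the single inequality stated in the corollary, I would observe that the perimeter ratio is bounded below by a positive constant depending only on $\m/\M$. Indeed, the Wulff inequality gives $\PP(E)\geq n|K|^{1/n}|E|^{1/n'}$, while $\PP(E)\leq \M\,P(E)$ by definition of $\M$, so $P(E)/[n|K|^{1/n}|E|^{1/n'}]\geq 1/\M$. Thus the factor may be inserted on the $\alpha_{\PP}$ term at the cost of inflating the constant $C$.

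For the $\lambda$-elliptic assertion, $f$ is in particular $0$-$\lambda$ convex, so Theorem~\ref{th2} with $\g=0$ reduces to $\BPP^*(E)\leq C\,\BPP(E)$ (the perimeter factor drops out entirely because its exponent vanishes). Squaring and invoking Theorem~\ref{Smooth} gives $\BPP^*(E)^2\leq C^2\BPP(E)^2\leq C'\delta_{\PP}(E)$, while $\alpha_{\PP}(E)^2\leq C\delta_{\PP}(E)$ is part of Theorem~\ref{Smooth} itself. Summing the two estimates produces the claimed sharp inequality.

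I do not anticipate any genuine obstacle: the entire argument is bookkeeping built on three previously established results. The only mildly non-trivial step is the insertion of the perimeter-ratio factor on the $\alpha_{\PP}$ side in the first assertion, and that reduces to the elementary inequality $P(E)/[n|K|^{1/n}|E|^{1/n'}]\geq 1/\M$ recorded above.
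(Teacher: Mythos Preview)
Your proposal is correct and matches the paper's approach exactly: the paper gives no separate proof, declaring the corollary ``an immediate consequence of Theorem~\ref{th2}, Theorem~\ref{th1}, and Theorem~\ref{Smooth},'' and your plan is precisely to combine these three results by raising the inequality of Theorem~\ref{th2} to the power $4n/(n+1)$ and feeding in Theorem~\ref{th1} (respectively, squaring and feeding in Theorem~\ref{Smooth} for the $\lambda$-elliptic case). One minor bookkeeping remark: the lower bound you derive for the perimeter ratio is $1/\M$, not a quantity depending only on the ratio $\m/\M$; this is harmless here because the overall constant already depends on $\lambda$ (which scales with $f$), but your sentence claiming the bound depends ``only on $\m/\M$'' slightly overstates what the computation gives.
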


\subsection{Discussion of the proofs}
At the core of the proof of \eqref{FuscoJulin} are a selection principle argument, the regularity theory of almost-minimizers of perimeter, and an analysis of the second variation of perimeter. Indeed, with a selection principle argument in the  spirit of the proof of (\ref{FMPStatement}) by Cicalese and Leonardi in \cite{CiLe12}, Fusco and Julin reduce to a sequence $\{F_j\}$ such that each $F_j$ is a $(\Lambda, r_0)$-minimizer of perimeter (Definition~\ref{pm}) and $F_j \to B$ in $L^1$.
 Then, by the standard regularity theory, each set $F_j$ has boundary  given by a small $C^1$ perturbation of the boundary of the ball.  This case is handled by a theorem of Fuglede in \cite{fuglede1989}, which says the following:
Let $E$ be a \textit{nearly spherical set}, i.e., a set with barycenter $\bary E = |E|^{-1}\int_E  x\,dx$ at the origin such that $|E| = |B|$ and 
$$\partial E = \{ x + u(x) x : x \in \partial B\}$$
for $u: \partial B \to \mathbb{R}$ with $u\in C^1(\partial B)$. There exist $C$ and $\e$ depending on $n$ such that if $\| u \|_{C^1(\partial B)} \leq\e$, then 
\begin{equation}\label{fugst}
\|u\|_{H^1(\partial B)}^2 \leq C \delta_1(E).
\end{equation}
The proof of \eqref{fugst} makes explicit use of spherical harmonics to provide a lower bound for the second variation of perimeter. 
It is then easily shown that $\alpha_1(E) + \beta_1(E) \leq C \|u\|_{H^1(\partial B)},$ and therefore (\ref{fugst}) implies \eqref{FuscoJulin}
in the case of nearly spherical sets.
Indeed, $\alpha_1(E) \leq C \beta_1(E)$ as shown in Proposition \ref{prop1}, and in the case of nearly spherical sets, the oscillation index $\beta_1$ is essentially an $L^2$ distance of gradients: if $w(x) = x+ u(x)x$, then 
$$\nu_E(w(x)) = \frac{ x(1+ u(x)) + \nabla u(x)}{\sqrt{(1+u)^2 + |\nabla u|^2}},$$
where the $\nabla u$ is the tangential gradient of $u$. Then
\begin{align*} 
n|K|\beta_1(E)^2 \leq \int_{\partial E} 1 - \nu_E(w) \cdot \frac{w}{|w|}\, d \Hn 
&= \int_{\partial B} \sqrt{ (1+u)^2 + |\nabla u|^2} - (1+u) \,d \Hn\\
&= \int_{\partial B} \frac{1}{2}| \nabla u|^2 + O(|\nabla u |^2)\, d\Hn \leq \|u\|_{H^1(\partial B)}^2.
\end{align*}
\vspace{2mm}

In each of Theorems~\ref{th1}, \ref{Smooth}, and \ref{dim2}, at least one of the three key ingredients of the proof of Fusco and Julin is missing. The proof of Theorem \ref{th1} uses a selection principle to reduce to a sequence of $(\Lambda, r_0)$-minimizers of $\PP$ converging in $L^1$ to $K$. However, for an arbitrary surface tension, uniform density estimates (Lemma~\ref{UDE}) are the strongest regularity property that one can hope to extract. We pair these estimates with \eqref{FiMPStatement} to obtain the result.

The proof of Theorem~\ref{Smooth} follows a strategy similar
 to that of the proof of (\ref{FuscoJulin}) in \cite{fuscojulin11}. 
If $f$ is a $\lambda$-elliptic surface tension, then $(\Lambda, r_0)$-minimizers of the corresponding surface energy $\PP$ enjoy strong regularity properties. Using a selection principle argument and the regularity theory, 
we reduce to the case where $\pa E$ is a small $C^1$ perturbation of $\pa K$. The difficulty arises, however, in showing the following analogue of Fuglede's result (\ref{fugst}) in the setting of the anisotropic surface energy.
\begin{proposition} \label{Fug-type}
Let $f$ be a $\lambda$-elliptic surface tension with corresponding surface energy $\PP$ and Wulff shape $K$. 
Let $E$ be a set such that $|E| = |K|$ and $\bary E = \bary K$, where $\bary E  =|E|^{-1} \int_E x\, dx$ denotes the barycenter of $E$. Suppose 
$$\partial E = \{ x + u(x) \nu_K(x) : x \in \partial K\}$$
where $u: \partial K \to \mathbb{R}$ is in $ C^1(\partial K)$. There exist $C$ and $\e_1$ depending on $n, \lambda,$ and $\m/\M$  such that if $\| u \|_{C^1(\partial K)}\leq \e_1$, then
 \begin{equation}\label{fugnew}\|u\|_{H^1(\partial K)}^2 \leq C\delta_{\PP}(E).
\end{equation}
\end{proposition}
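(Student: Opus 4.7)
The plan is to parametrize $\partial E$ as a normal graph over $\partial K$ via $\Phi_u(x) = x + u(x)\nu_K(x)$ and to Taylor expand $\PP(E)-\PP(K)$ to second order in $(u,\nabla u)$. Writing
\begin{equation*}
\PP(E) = \int_{\partial K} f(\tilde\nu(x))\, J\Phi_u(x)\, d\Hn(x),
\end{equation*}
where $\tilde\nu$ is the outer unit normal to $\partial E$ at $\Phi_u(x)$ and $J\Phi_u$ is the tangential Jacobian, I would expand $\tilde\nu$ and $J\Phi_u$ in $(u,\nabla u)$. Using the $1$-homogeneity of $f$ (so $\nabla f(\nu_K)\cdot\nu_K = f(\nu_K)$ and $\nabla^2 f(\nu_K)\nu_K = 0$), this yields
\begin{equation*}
\PP(E)-\PP(K) = \int_{\partial K}\bigl(L[u] + Q[u,\nabla u]\bigr)\,d\Hn + o(\|u\|_{C^1(\partial K)}^2),
\end{equation*}
where $L[u]$ is linear in $u$ and $Q$ is quadratic in $(u,\nabla u)$.

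I would then dispose of $L[u]$ as follows: since $K$ is the isoperimetric minimizer for $\PP$, the anisotropic mean curvature of $\partial K$ is constant, so $L[u]$ is a constant multiple of $u$. The volume constraint $|E|=|K|$ linearizes to $\int_{\partial K} u\,d\Hn = O(\|u\|_{C^0}^2)$, so the integral of $L[u]$ contributes only to the quadratic error. After this reduction,
\begin{equation*}
Q[u,\nabla u] = \tfrac{1}{2}\bigl\langle \nabla^2 f(\nu_K)\nabla u,\,\nabla u\bigr\rangle - \tfrac{1}{2}\,c(x)\,u^2,
\end{equation*}
where $c(x)$ is a bounded weight arising from $\nabla f(\nu_K)$ paired against the second fundamental form of $\partial K$. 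Because $\nabla u$ is tangential to $\partial K$ and $\nabla^2 f(\nu_K)$ is positive definite on the tangent space by $\lambda$-ellipticity, the leading term dominates $\tfrac{\lambda}{2}|\nabla u|^2$; the constants depend on $\m/\M$ through the bi-Lipschitz comparison between $\partial K$ and $S^{n-1}$, and the hypothesis $\|u\|_{C^1(\partial K)}\leq\e_1$ absorbs the $o(\|u\|_{C^1}^2)$ remainder into the coercive part.

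The main obstacle will be upgrading the $\nabla u$-coercivity of $Q$ to a full $H^1$ bound on $u$ by controlling the bad directions. The constant function is a negative direction of $Q$ (owing to the $-c(x)u^2$ term), and translation invariance of $\PP$ makes the $n$-dimensional family of functions $\xi\cdot\nu_K(x)$, $\xi\in\Rn$, null directions of $Q$. These $n+1$ directions are eliminated by the normalizations: $|E|=|K|$ linearizes (as above) to $\int_{\partial K} u\,d\Hn = O(\|u\|_{C^0}^2)$, killing the constant; and $\bary E = \bary K$ linearizes to $\int_{\partial K} u\,x\,d\Hn = O(\|u\|_{C^0}^2)$, which, after a bounded linear change of basis, kills the translation modes (the requisite invertibility follows from the uniform convexity of $\partial K$). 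On the resulting codimension-$(n+1)$ constraint manifold, a compactness--contradiction argument---using that $\partial K$ is $C^2$ and uniformly convex by $\lambda$-ellipticity---produces a quantitative Poincar\'e-type inequality $\|u\|_{L^2(\partial K)}^2 \leq C\|\nabla u\|_{L^2(\partial K)}^2$ with $C = C(n,\lambda,\m/\M)$. Combining this with the pointwise lower bound on $Q$ then yields \eqref{fugnew}.
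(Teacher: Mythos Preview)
Your expansion and handling of the first-order term are essentially what the paper does in Lemma~\ref{expansionlem}: one obtains
\[
\PP(E)-\PP(K)=B(u)+\e\,O(\|u\|_{H^1}^2),\qquad B(u)=\tfrac12\int_{\partial K}(\nabla u)^{\rm T}\nabla^2 f(\nu_K)\nabla u-\tfrac12\int_{\partial K}H_K u^2,
\]
and the volume constraint kills the linear contribution. Your identification of the constant and the translation modes $\xi\cdot\nu_K$ as the bad directions is also correct (and $\int_{\partial K}(\xi\cdot\nu_K)\,x\,d\Hn=|K|\,\xi$ by the divergence theorem, so the barycenter constraint does eliminate them at the linear level).

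The gap is in the last paragraph. First, a Poincar\'e inequality $\|u\|_{L^2}^2\le C_P\|\nabla u\|_{L^2}^2$ on the constrained subspace does not by itself yield $B(u)\ge c\|u\|_{H^1}^2$: plugging it into the pointwise lower bound gives
\[
B(u)\ \ge\ \tfrac12\bigl(\lambda-\|H_K\|_\infty\,C_P\bigr)\|\nabla u\|_{L^2}^2,
\]
which is useless unless $C_P<\lambda/\|H_K\|_\infty$. There is no a priori reason your compactness argument produces a constant this small; the translation modes are null directions of the Jacobi form $B$, not eigenfunctions of the Laplacian on $\partial K$, so removing them need not improve the ordinary Poincar\'e constant.

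Second, and more seriously, even if you run the compactness--contradiction argument directly on $B$ (which for a \emph{fixed} $f$ does give $B(u)\ge c(f)\|u\|_{H^1}^2$ on the constrained subspace, since the spectrum is discrete and the kernel is exactly the translation modes), you will only get $c=c(f)$. The proposition requires $C=C(n,\lambda,\m/\M)$, and the class of $\lambda$-elliptic surface tensions with prescribed $\m/\M$ is not $C^2$-compact (there is no upper bound on $\nabla^2 f$), so a compactness argument over $f$ does not close. This is precisely the obstruction the paper highlights in the introduction: unlike Fuglede's setting, one cannot access the spectrum explicitly, and a soft argument does not produce a uniform gap.

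The paper circumvents both issues by a different route. Instead of seeking a spectral gap, it bounds $\bigl(\int_{\partial K}|u|\bigr)^2\le C\,\delta_\PP(E)$ directly: the quantitative Wulff inequality \eqref{FiMPStatement} gives $|E\Delta(K+x_0)|\le C(n)\delta_\PP(E)^{1/2}$, the barycenter constraint forces $|x_0|\le C(\m/\M)\,\delta_\PP(E)^{1/2}$, and a layer-cake computation gives $\int|u|\le 2|E\Delta K|$. Then Nash's inequality on $\partial K$ (whose constant depends on $\|H_K\|_\infty\le(n-1)/\lambda$ and $\m/\M$, hence only on $n,\lambda,\m/\M$) interpolates:
\[
\|u\|_{L^2}^2\ \le\ c\,\eta^{(n+2)/n}\|\nabla u\|_{L^2}^2+\frac{c}{\eta^{(n+2)/2}}\Bigl(\int|u|\Bigr)^2.
\]
Feeding this into $\lambda\|\nabla u\|_{L^2}^2\le 2B(u)+\|H_K\|_\infty\|u\|_{L^2}^2$ and choosing $\eta$ small absorbs the gradient term, leaving $\|\nabla u\|_{L^2}^2\le C\delta_\PP(E)+\e\,O(\|u\|_{H^1}^2)$, and then $\|u\|_{L^2}^2$ is controlled the same way. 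The point is that \eqref{FiMPStatement} already encodes, with a dimensional constant, the quantitative stability information that your spectral-gap argument is trying to recover from scratch.
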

Again, $\m$ and $\M$ are defined in \eqref{mM}.
To prove \eqref{fugst}, Fuglede shows that, due to the volume and barycenter constraints respectively, the function $u$ is orthogonal to the first and second eigenspaces of the Laplace operator on the sphere. This implies that, thanks to a gap in the spectrum of this operator, functions satisfying these constraints satisfy a Poincar\'{e} inequality with a larger constant than the Poincar\'{e} inequality that holds for $w \in H^{1}(\pa B)$ with mean zero (i.e., satisfying only the volume constraint). Fuglede's reasoning uses that fact that the eigenvalues and eigenfunctions of the Laplacian on the sphere are \textit{explicitly known}.

The analogous operator on $\pa K$ arising in the second variation of $\PP$ also has a discrete spectrum, but one cannot expect to understand its spectrum explicitly. Instead, to prove \eqref{Fug-type}, we  exploit \eqref{FiMPStatement} in order to obtain a Poincar\'{e} inequality with a larger constant for functions $u\in H^{1}(\pa K)$ satisfying the volume and barycenter constraints. 

Then, as in the isotropic case, one shows that $ \alpha_{\PP}(E) + \BPP(E) \leq C \|u \|_{H^1(\partial K)}$ for a constant $C = C(n, \| \nabla^2 f\|_{C^0(\pa K)})$, and therefore \eqref{fugnew} implies \eqref{smooth12} for small $C^1$ perturbations. Indeed, Proposition~\ref{prop1} implies that  $\alpha_{\PP}(E) \leq C(n) \BPP(E)$, and the fact that  $ \BPP(E) \leq C \|u\|_{H^1(\partial K)}$
 is a consequence of a Taylor expansion and a change of coordinates. The computation is postponed until (\ref{betaexp}) as it relies on notation introduced in Section~\ref{smoothsection}.
 
The proof of Theorem~\ref{dim2} also uses a selection principle-type argument to reduce to a sequence of almost-minimizers of $\PP$ converging in $L^1$ to the Wulff shape. In this case, a rigidity result of Figalli and Maggi in \cite{figallimaggi11} allows us reduce to the case where $E$ is a convex polygon whose set of normal vectors is equal to the set of normal vectors of $K$. From here, an explicit computation (Proposition~\ref{crystalprop}) shows the result. 
\\
 
 The paper is organized as follows. In Section \ref{preliminaries}, we introduce some necessary preliminaries for our main objects of study. Section \ref{generalsection} is dedicated to the proof of Theorem \ref{th1}, while in Sections \ref{smoothsection} and \ref{crystalsection} we prove Theorems \ref{Smooth}  and \ref{dim2} respectively. In Section~\ref{other}, we consider the term $\beta_{\PP}^*(E)$ defined in \eqref{betastar}, providing two examples that show that one cannot expect stability with a power independent of the regularity of $f$ and proving Theorem~\ref{th2}. \\

 \noindent {\bf{Acknowledgments:}} The author would like to thank Alessio Figalli and Francesco Maggi for their mentorship, guidance, and many helpful discussions. Further thanks are due to a thorough referee for providing several useful remarks. This research was supported by the NSF Graduate Research Fellowship under Grant No. DGE-$1110007$.
\section{Preliminaries}\label{preliminaries}
Let us introduce a few key properties about sets of finite perimeter, the anisotropic surface energy, and the $\PP$-oscillation index $\BPP$.
\subsection{Sets of finite perimeter}\label{SOFP}
Given an $\Rn$-valued Borel measure $\mu$ on $\Rn$, the \textit{total variation} $|\mu|$ of $\mu$ on a Borel set $E$ is defined by
$$|\mu|(E) = \sup \bigg\{ \sum_{j\in\mathbb{N}} |\mu(E_j) | : \ E_j\cap E_i = \emptyset, \ \bigcup_{j\in\mathbb{N}} E_j \subset E\bigg\}.$$
A measurable set $E\subset \Rn$ is said to be a \textit{set of finite perimeter} if the distributional gradient ${\rm{D}}\chi_E$ of the characteristic function of $E$ is an $\Rn$-valued Borel measure on $\Rn$ with $|{\rm{D}}\chi_E|(\Rn) < \infty.$ 

For a set of finite perimeter $E$, the \textit{reduced boundary} $\partial^*E$ is the set of points $x\in\Rn$ such that $|{\rm{D}}\chi_E|(B_r(x))>0$ for all $r>0$ and 
\begin{equation}\label{reducedbd}
\underset{r\to 0^+}{\lim} \frac{{\rm{D}}\chi_E (B_r(x))}{|{\rm{D}}\chi_E| (B_r(x) )} \quad \text{ exists and belongs to }S^{n-1}.
\end{equation}
If $x \in \partial^*E$, then we let $-\nu_E$ denote the limit in \eqref{reducedbd}. We then call $\nu_E: \partial^*E \to S^{n-1} $ the \textit{measure theoretic outer unit normal to E}. Up to modifying $E$ on a set of Lebesgue measure zero, one may assume that the topological boundary $\pa E$ is the closure of the reduced boundary $\pa^*E.$ For the remainder of the paper, we make this assumption.

\subsection{The surface tension and the gauge function}\label{Subsection:anisotropies}
Throughout the paper, we let 
\begin{equation}\label{mM}
m_{\PP} = \underset{\nu \in S^{n-1} }{\inf}f(\nu), \qquad M_{\PP}=  \underset{\nu \in S^{n-1} }{\sup}f(\nu).
\end{equation}
It follows that 
$$\frac{1}{M_{\PP}} = \underset{x \in S^{n-1}}{\inf}f_*(x), \qquad \frac{1}{m_{\PP} } =\underset{x\in S^{n-1}}{\sup}f_*(x).$$
One easily shows that $f(\nu) = \sup \{ x\cdot \nu : x \in K \} $ and $f_*(x) = \inf \{ \lambda: \frac{x}{\lambda} \in K\}.$ This also implies that $B_{m_{\PP}} \subset K \subset B_{M_{\PP}}$, and so if 
$|K| =1,$ then $\m^n |B| \leq 1 \leq \M^n |B|$.
As mentioned in the introduction, the surface tension $f$ and gauge function $f_*$ are dual in the sense that they satisfy a Cauchy-Schwarz-type inequality, called the \textit{Fenchel inequality}:
$$x\cdot \nu \leq f_*(x) f(\nu)$$
for all $x,\nu \in \Rn$.
We may characterize the equality cases in the Fenchel inequality: for any $\nu$, ${x}\cdot \nu={f_*(x)}f(\nu) $ if and only if $\nu$ is normal to a supporting hyperplane of $K$ at the point $\frac{x}{f_*(x)}\in \partial K$. Indeed, $\nu$ is normal to a supporting hyperplane of $K$ at $x \in \partial K$ if and only if $\nu \cdot (y- x) \leq 0$ (so $ \nu \cdot y \leq \nu \cdot x$)
for all $y \in K.$ This holds if and only if  $\nu \cdot x = \sup\{ y\cdot \nu : y\in K\} = f(\nu)$. 
In particular, if $x\in \partial^* K$, then $f_*(x) = 1$ and 
 \begin{equation}\label{bdK}
 f(\nu_K(x)) = x\cdot \nu_K(x).
 \end{equation}

We may compute the gradient of $f_*$ at points of differentiability using the Fenchel inequality.
  The gauge function $f_*$ is differentiable at $x_0\in \Rn$
   if there is a unique supporting hyperplane to $K$ at $\frac{x_0}{f_*(x_0)}\in \partial K$. 
 For such an $x_0, $
let $\nu_0 = \nu_K(\frac{x}{f_*(x)}) \in \Rn$ be normal to the supporting hyperplane to $K$ at $\frac{x_0}{f_*(x_0)}$, so $\frac{x_0}{f_*(x_0)}\cdot\nu_0 = f(\nu_0)$ by \eqref{bdK}.
We define the Fenchel deficit functional by $G(x) = f(\nu_0)f_*(x) - x\cdot \nu_0.$
By the Fenchel inequality, $G(x) \geq 0$ for all $x$ and $G(x_0) =0$, so  
$G$ has a local minimum at $x_0$ and thus
$$0 =\nabla G(x_0)
 =  f(\nu_0)\nabla f_*(x_0) -\nu_0.$$
Rearranging, we obtain $\nabla f_*(x_0) = \frac{\nu_0}{ f(\nu_0)}.$
The $1$-homogeneity of $f$ then implies that
\begin{equation}\label{equal1}
f(\nabla f_*(x)) =1.
\end{equation}
Furthermore, this implies that
\begin{equation}\label{gradprop}
x \cdot \nabla f_*(x) = x \cdot \nu_K\Big(\frac{x}{f_*(x)}\Big) = f_*(x)
\end{equation}
(alternatively, this follows from Euler's identity for homogeneous functions).
An analogous argument ensures that 
\begin{equation}\label{gradf}\nabla f(\nu_K(x)) = x \end{equation}
for $x\in\partial^* K$. Furthermore, we compute
\begin{equation}\label{div}
\DIV\frac{x}{f_*(x)} = \frac{n-1}{f_*(x)}.
 \end{equation}
Indeed,
\begin{align*}
\DIV \frac{x}{f_*(x)} 
& = \frac{ \tr \, \nabla x}{f_*(x)} + x\cdot \nabla\Big( \frac{1}{f_*(x)}\Big) =\frac{n}{f_*(x)} -\frac{x \cdot \nabla f_*(x) }{f_*(x)^2} = \frac{n-1}{f_*(x)},
\end{align*}
where the final equality follows from \eqref{gradprop}. 

\subsection{Properties of $\alpha_{\PP}$, $\BPP$, and $\g_{\PP}$}\label{abg} 
 Using the divergence theorem, by approximation and the dominated convergence theorem, and \eqref{div},
  we find that for any $y\in\Rn$,
$$
 \int_{\partial^* E } \frac{x-y}{f_*(x-y) }\cdot \nu_{E}(x) \,d\Hn = (n-1)\int_E \frac{dx}{f_*(x-y)} .
$$ 
We may then write 
 \begin{equation}\label{betaform}\BPP(E)^2   = \frac{\PP(E) -(n-1) \g_{\PP} (E)}{n|K|^{1/n}|E|^{1/n'}},
\end{equation}
 where $\g_{\PP}(E)$ is defined by
 \begin{equation}\label{gamma}
 \g_{\PP} (E) =  \underset{y\in\Rn}{\sup} \int_{E }
 \frac{dx}{f_*(x-y) }.
 \end{equation}
  The supremum in (\ref{gamma}) is attained, though
perhaps not uniquely. If $y\in \Rn$ is a point such that 
 $$\g_{\PP}(E) = \int_E \frac{dx}{f_*(x-y)},$$
 then we call $y$ a \textit{center of $E$}, and we denote by $y_E$ a generic center of $E$. The Wulff shape $K$ has unique center $y_K = 0$.
  Indeed, take any $y \in \Rn,$ $y\neq 0$, and recall that $K=
  \{f_*(x) < 1\} $. Then
 \begin{align*}
 \int_{K} \frac{dx }{f_*(x)}&  - \int_K \frac{dx}{f_*(x-y)} 
  = \int_K \frac{dx}{f_*(x)} - \int_{K+y} \frac{dx}{f_*(x)}\\
&  = \int_{K \setminus (K+y)} \frac{dx}{f_*(x)} - \int_{(K+y)\setminus K} \frac{dx}{f_*(x) } > \int_{K \setminus (K+y)}1dx - \int_{(K+y ) \setminus K } 1dx  =0.
  \end{align*}
A similar argument verifies that if $|E| = |K|$, then 
 \begin{equation}\label{maxgamma}
 \g_{\PP}(E) \leq \g_{\PP}(K).
 \end{equation}
 Moreover,  $(n-1) \g_{\PP}(K) = \PP(K) = n|K|.$

The following continuity properties of $\PP$ and $\g_{\PP}$ will be useful.
 \begin{proposition}\label{convergence} Suppose that $\{E_j\}$ is a sequence of sets converging in $L^1$ to a set $E$, and suppose that $\{f^j\}$ is a sequence of surface tensions converging locally uniformly to $f$, with corresponding surface energies $\{\PP_j\}$ and $\PP$.
\begin{enumerate}
\item The following lower semicontinuity property holds:
$$
\PP(E) \leq \underset{j\to\infty}{\liminf}\, \PP_j(E_j).
$$
\item The function $\g_{\PP}$ defined in (\ref{gamma}) is H\"{o}lder continuous with respect to $L^1$ convergence of sets with H\"{o}lder exponent equal to $1/n'$. In particular,
$$ |\g_{\PP}(E) - \g_{\PP}(F) | \leq \frac{n|K|}{n-1} |E\Delta F|^{1/n'}$$
for any two sets of finite perimeter $E, F\subset \Rn$. 
Moreover,
\begin{equation*}\label{gammacts}
\underset{j \to \infty}\lim \g_{\PP_j}(E_j) = \g_{\PP}(E).
\end{equation*}
\end{enumerate}
 \end{proposition}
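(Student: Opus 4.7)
I would prove part (1) via the distributional characterization
\begin{equation*}
\PP(G) \;=\; \sup\Bigl\{ \int_G \DIV T\,dx \,:\, T \in C_c^1(\Rn;\Rn),\ f_*(T(x)) \leq 1 \text{ for all } x \Bigr\},
\end{equation*}
valid for every set of finite perimeter $G$; it follows from $\PP(G)=\int_{\pa^*G} f(\nu_G)\,d\Hn$, the Fenchel inequality, and the divergence theorem. Given any admissible $T$, I would replace it by $(1-\eta)T$, so that $1$-homogeneity of $f_*$ yields $f_*((1-\eta)T(x)) \leq 1-\eta$. Since $f^j \to f$ locally uniformly, the Wulff shapes $K_j \to K$ in Hausdorff distance and hence their support functions $f^j_* = h_{K_j}$ converge locally uniformly to $f_* = h_K$; consequently for all large $j$, $f^j_*((1-\eta)T(x)) \leq 1$, so $(1-\eta)\int_{E_j} \DIV T\,dx \leq \PP_j(E_j)$. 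Passing $j\to\infty$ via $L^1$ convergence of $E_j\to E$, supping over $T$, and sending $\eta\to 0$ gives $\PP(E) \leq \liminf_j \PP_j(E_j)$.

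For the H\"older bound in part (2), the key observation is that the super-level sets $\{1/f_*(\cdot - y) > t\} = y + (1/t)K$ of the integrand are dilated Wulff shapes centered at $y$. By the bathtub principle, for fixed $|A|$ the integral $\int_A dx/f_*(x-y)$ is maximized when $A = y + rK$ with $r^n|K| = |A|$; a scaling computation combined with the identity $\g_{\PP}(K) = n|K|/(n-1)$ then yields
\begin{equation*}
\int_A \frac{dx}{f_*(x-y)} \;\leq\; \frac{n|K|^{1/n}}{n-1}\, |A|^{1/n'}.
\end{equation*}
Taking a center $y_E$ of $E$, one has $\g_{\PP}(E) - \g_{\PP}(F) \leq \int_E dx/f_*(x-y_E) - \int_F dx/f_*(x-y_E) \leq \int_{E\setminus F} dx/f_*(x-y_E)$, to which the above rearrangement bound applies with $A = E\setminus F$; swapping the roles of $E$ and $F$ yields the symmetric inequality.

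For the joint convergence $\g_{\PP_j}(E_j) \to \g_{\PP}(E)$, I would use a two-sided variational argument. Let $y^j$ be a center for $E_j$ with respect to $\PP_j$. Local uniform convergence $f^j\to f$ supplies uniform bounds on $m_{\PP_j}, M_{\PP_j}$, hence the envelope $1/f^j_*(x-y) \leq C/|x-y|$ uniformly in $j$; combined with the $L^1$ concentration of $\{E_j\}$ inherited from $E_j \to E$, this forces $\int_{E_j} dx/f^j_*(x-y) \to 0$ as $|y| \to \infty$ uniformly in $j$. Paired with the lower bound $\g_{\PP_j}(E_j) \geq \int_{E_j} dx/f^j_*(x - y_E) \to \g_{\PP}(E) > 0$ (obtained by pointwise convergence of the integrand at a fixed maximizer $y_E$ for $\g_{\PP}(E)$), this confines $\{y^j\}$ to a fixed compact set. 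Extracting a subsequence $y^j \to y^*$ and applying dominated convergence with the envelope just produced gives $\limsup_j \g_{\PP_j}(E_j) \leq \int_E dx/f_*(x-y^*) \leq \g_{\PP}(E)$, while the matching $\liminf$ inequality was already observed by testing against $y_E$. The step requiring the most care will be the uniform integrability of the moving singular kernel $1/f^j_*(x-y^j)$ near the shifting singularity; Hausdorff convergence of the Wulff shapes and the integrability of $1/|\cdot|$ on bounded subsets of $\Rn$ (for $n\geq 2$) make this manageable, but it is the substantive point needed to pass to the limit inside the supremum defining $\g_{\PP_j}(E_j)$.
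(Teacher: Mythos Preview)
Your argument for (1) and the H\"older bound in (2) matches the paper's: the paper rescales test fields by $1/\|f^j_*(T)\|_{L^\infty}$ in place of your fixed $(1-\eta)$, and uses the same rearrangement to bound $\int_A dx/f_*(x-y)$ by the value on a dilated Wulff shape of the same volume. One slip: $f^j_*$ is the \emph{gauge} of $K_j$, not its support function; the support function of $K_j$ is $f^j$ itself, so your chain ``$K_j\to K$ in Hausdorff distance $\Rightarrow$ $h_{K_j}=f^j_*\to f_*$'' is circular as written. The conclusion $f^j_*\to f_*$ locally uniformly is nonetheless correct, either because Hausdorff convergence of $K_j$ also forces convergence of their gauges, or because $f^j_*$ is the support function of the polar body $\{f^j\le 1\}$ and those converge in Hausdorff distance directly from $f^j\to f$.

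For the joint limit $\g_{\PP_j}(E_j)\to\g_{\PP}(E)$ you organize things differently. The paper writes
\[
|\g_{\PP_j}(E_j)-\g_{\PP}(E)| \;\le\; |\g_{\PP_j}(E_j)-\g_{\PP}(E_j)| + |\g_{\PP}(E_j)-\g_{\PP}(E)|,
\]
dispatches the second term with the H\"older bound just proved, and for the first picks a center realizing the larger of the two $\gamma$'s, \emph{translates the singularity to the origin}, and uses $1$-homogeneity to get $|1/f^j_*(x)-1/f_*(x)| \le C\|f^j_*-f_*\|_{L^\infty(S^{n-1})}/|x|$, which is uniformly small on $\{|x|\ge\e\}$ and integrates to $O(\e^{n-1})$ on $B_\e$. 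This sidesteps any need to show the centers stay bounded. Your $\liminf/\limsup$ route via compactness of centers and Vitali convergence also works, and you have correctly flagged uniform integrability of the shifted kernel as the analytic core; the paper's translation trick simply packages that same content more economically.
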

  \begin{proof}
\noindent \textit{Proof of $(1)$:} From the divergence theorem and the characterization $f(\nu) = \sup \{ x\cdot \nu : f_*(x) \leq 1\}$, one finds that the surface energy of a set $E$ is the anisotropic total variation of its characteristic function $\chi_E$: 
\begin{equation}\label{TV}
\PP_j(E_j) = TV_{f_{j}}(\chi_{E_j}) : = \sup \Big\{ \int_{E_j} \DIV T\, dx\ \big{|} \ \ T\in C^1_c(\Rn, \Rn),\
 f_*^j(T) \leq 1 \Big\}.
\end{equation}
Let $T \in 
C^1_c(\Rn, \Rn)$ be a vector field such that
 $f_*(T) \leq 1$ for all $x\in \Rn$. Then we have
\begin{align*}
\int_E \DIV T \,dx &  = \lim_{j\to \infty} \int_{E_j} \DIV T\, dx = \lim_{j \to \infty} \|f_*^j (T)\|_{L^{\infty}(\Rn)} \int_{E_j} \DIV S_j\, dx \leq  \underset{j\to\infty}{\liminf} \mbox{ }\PP_j(E_j),
\end{align*}
where we take
 $S_j = 
T/
  \|f_*^j(T)\|_{L^{\infty}(\Rn)}.$ Taking the supremum over $\{T\in C^1_c(\Rn,\Rn ) : f_*(T) \leq 1\}$, we obtain the result.\\

\noindent \textit{Proof of $(2)$:} By \eqref{gamma},
\begin{align*} \g_{\PP}(E) - \g_{\PP}(F)& \leq \int_E \frac{dx}{f_*(x-y_E)} - \int_F \frac{dx}{f_*(x-y_E)} \leq \int_{E\Delta F } \frac{dx}{f_*(x-y_E)} .
\end{align*}
Letting $r$ be such that $|rK| = |E\Delta F|$ and  recalling \eqref{maxgamma}, we have
\begin{equation}\label{tozero}
\int_{E\Delta F } \frac{dx}{f_*(x-y_E)}  \leq \int_{rK} \frac{dx}{f_*(x)} =\g_{\PP}(rK) = \frac{\PP(rK)}{n-1} = \frac{n|K|r^{n-1}}{n-1} = \frac{n|K|}{n-1} |E\Delta F|^{1/n'}.
\end{equation}
Thus $\g_{\PP}(E) - \g_{\PP}(F) \leq \frac{n|K|}{n-1} |E\Delta F|^{1/n'}$. The analogous argument holds for $\g_{\PP}(F) - \g_{\PP}(E),$ implying the H\"{o}lder continuity of $\g_{\PP}$.

For the second equation, we note that if $f^j \to f$ locally uniformly, then $f_*^j \to f_* $ locally uniformly and $M_{\PP_j} \to M_{\PP}$. The triangle inequality gives
  \begin{align*}
  |\g_{\PP_j}(E_j) - \g_{\PP}(E) | \leq |\g_{\PP_j}(E_j) - \g_{\PP}(E_j) | + |\g_{\PP}(E_j) - \g_{\PP}(E)|.
  \end{align*} 
The second term goes to zero by the H\"{o}lder continuity that we have just shown. To bound the first term, let $y_{E_j}$ be a center of $E_j$ with respect to the surface energy $\PP_j$. If $\g_{\PP_j}(E_j) \geq \g_{\PP}(E_j) $, then 
  \begin{align*}
0\leq  \g_{\PP_j}(E_j) - \g_{\PP}(E_j) &\leq \int_{E_j} \frac{1}{f_*^j (x-y_{E_j})} - \frac{1}{f_*(x-y_{E_j})} dx 
  = \int_{E_j + y_{E_j}} \frac{1}{f_*^j(x)} - \frac{1}{f_*(x) } dx\\
   &= \int_{\Rn}\chi_{(E_j + y_{E_j})\setminus B_{\e}(0)} \bigg(\frac{1}{f_*^j(x)} - \frac{1}{f_*(x) } \bigg)dx + \int_{B_{\e}(0)} \frac{1}{f_*^j(x)} - \frac{1}{f_*(x) } dx .
 \end{align*}
 For $\e>0$ fixed, the first integral goes to zero as $j\to \infty$. For the second integral, we have
$$
  \int_{B_{\e}(0)} \frac{1}{f_*^j(x)  } + \frac{1}{f_*(x)} \,dx \leq \int_{B_{\e}(0)} \frac{M_{\PP_j} + M_{\PP}}{|x|}\, dx \leq C \e^{n-1}.
$$
Taking $\e \to 0$, we conclude that $ \g_{\PP_j}(E_j) - \g_{\PP}(E_j) \to0$ as $j \to \infty$. The case where $\g_{\PP_j}(E_j) \leq \g_{\PP}(E_j) $ is analogous.
  \end{proof}
  
\begin{remark}\label{RMK: LSC}\rm{With sequences as in the hypothesis of Proposition~\ref{convergence} above,  $\BPP$ has the following lower semicontinuity property:
$$
\BPP(E)\leq \underset{j\to\infty}{\liminf}\,\beta_{\PP_j}(E_j).
$$
This follows immediately from parts $(1)$ and $(2)$ of Proposition~\ref{convergence} and the decomposition in \eqref{betaform}.}
\end{remark}

 \begin{lemma}\label{3.1}
For every $\e>0$, there exists $\eta>0$ such that if $|F\Delta K | \leq \eta$, then $|y_F|<\e $ for any center $y_F$ of $F$.
 \end{lemma}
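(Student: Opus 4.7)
The plan is a compactness argument by contradiction, exploiting the uniqueness of the center of $K$ shown in the discussion preceding \eqref{maxgamma}. Suppose the conclusion fails: there exist $\e>0$ and sets $F_j$ with $|F_j\Delta K|\to 0$ admitting centers $y_j:=y_{F_j}$ satisfying $|y_j|\geq\e$. First I would show that $\{y_j\}$ is bounded. Fix $R$ with $K\subset B_R$ and assume for contradiction that $|y_j|\to\infty$ along a subsequence. Splitting $F_j=(F_j\cap B_{2R})\cup(F_j\setminus B_{2R})$, on the first piece the bound $f_*(x-y_j)\geq(|y_j|-2R)/M_\PP$ controls the integral by $M_\PP|B_{2R}|/(|y_j|-2R)\to 0$, and on the second piece the rearrangement bound $\int_A|x-y_j|^{-1}\,dx\leq C(n)|A|^{1/n'}$ (via $f_*\geq|\cdot|/M_\PP$) together with $|F_j\setminus B_{2R}|\leq|F_j\setminus K|\to 0$ yields a vanishing contribution. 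Hence $\g_\PP(F_j)\to 0$, contradicting Proposition~\ref{convergence}(2), which gives $\g_\PP(F_j)\to\g_\PP(K)=n|K|/(n-1)>0$. Passing to a subsequence, $y_j\to y^*$ with $|y^*|\geq\e$.

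Next I would show $\g_\PP(F_j)\to\int_K dx/f_*(x-y^*)$. Writing
$$\g_\PP(F_j)-\int_K\frac{dx}{f_*(x-y^*)}=\int_{F_j}\!\left(\frac{1}{f_*(x-y_j)}-\frac{1}{f_*(x-y^*)}\right)\!dx+\left(\int_{F_j}-\int_K\right)\!\frac{dx}{f_*(x-y^*)},$$
the second term vanishes by the argument used in the proof of Proposition~\ref{convergence}(2): $1/f_*(\cdot-y^*)$ lies in $L^1_{\mathrm{loc}}$, and splitting $F_j\Delta K$ into its intersection with a large ball (where $L^1$ convergence of characteristic functions applies) and its exterior (controlled by $|F_j\setminus K|^{1/n'}\to 0$) kills the difference. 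For the first term, fix $\delta>0$: on $B_\delta(y^*)\cap F_j$ both integrands contribute at most $C(n,M_\PP)\delta^{n-1}$ uniformly in $j$ by integrability of $1/|\cdot|$ in dimension $n\geq 2$, while on the complement $1/f_*$ is uniformly continuous near $x-y^*$ and $y_j\to y^*$, so the difference tends to zero uniformly. Sending $j\to\infty$ and then $\delta\to 0$ gives the claim.

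Combining this with $\g_\PP(F_j)\to\g_\PP(K)$ from Proposition~\ref{convergence}(2) yields $\int_K dx/f_*(x-y^*)=\int_K dx/f_*(x)$, so $y^*$ is a center of $K$. The strict inequality shown in the discussion preceding \eqref{maxgamma} then forces $y^*=0$, contradicting $|y^*|\geq\e$. The main obstacle is the passage to the limit: since the singularity of $1/f_*(\cdot-y_j)$ moves with $j$, no fixed $L^1$ majorant controls the integrand, so one must isolate the singularity in a shrinking ball around $y^*$ and exploit integrability of $|x|^{-1}$ near the origin, in the same spirit as the proof of Proposition~\ref{convergence}(2).
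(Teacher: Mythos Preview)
Your argument is correct. Both proofs are by contradiction and ultimately appeal to the uniqueness of the center of $K$ established just before \eqref{maxgamma}, but the routes to the limiting identity differ.

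The paper avoids your two main technical steps (boundedness of $\{y_j\}$ and the moving-singularity limit) with a single triangle inequality: writing
\[
\g_{\PP}(K)\ \leq\ \big|\g_{\PP}(K)-\g_{\PP}(F_j)\big|\ +\ \Big|\int_{F_j}\frac{dx}{f_*(x-y_{F_j})}-\int_{K}\frac{dx}{f_*(x-y_{F_j})}\Big|\ +\ \int_{K}\frac{dx}{f_*(x-y_{F_j})},
\]
both error terms are bounded by $C|F_j\Delta K|^{1/n'}$ directly from the rearrangement estimate \eqref{tozero}, \emph{uniformly} in $y_{F_j}$. This immediately gives $\g_{\PP}(K)\leq\liminf_j\int_K dx/f_*(x-y_{F_j})$, while the opposite inequality holds for every $y$; uniqueness of the center of $K$ then forces $y_{F_j}\to 0$. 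Transferring the integral from the varying $F_j$ to the fixed domain $K$ via \eqref{tozero} is the device that absorbs the moving singularity and makes the separate boundedness argument unnecessary. Your approach is more hands-on---you isolate the singularity in $B_\delta(y^*)$ and use local integrability of $|x|^{-1}$---which is perfectly valid and perhaps more transparent, but longer. Note also that your ``second term'' can be dispatched in one line by \eqref{tozero}: $\big|\int_{F_j}-\int_K\big|\,dx/f_*(x-y^*)\leq C|F_j\Delta K|^{1/n'}$, so the large-ball split you sketch there is not needed.
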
 
\begin{proof}
Suppose $|K \Delta F_j| \to 0$. By the triangle inequality,
\begin{align*} 
\int_K \frac{dx}{f_*(x)}
& \leq  \bigg|\int_{K} \frac{dx}{f_*(x)} - \int_{F_j} \frac{dx}{f_*(x-y_{F_j})} \bigg|+\bigg| \int_{F_j} \frac{dx}{f_*(x -y_{F_j})} - \int_{K} \frac{dx}{f_*(x -y_{F_j})} \bigg|
  +\int_{K} \frac{dx}{f_*(x -y_{F_j})}.
  \end{align*}
By \eqref{tozero}, the first two terms on the right hand side go to zero as $j\to \infty,$ implying that
 $$ \int_K \frac{dx}{f_*(x)}\leq  \underset{j \to \infty}{\lim} \int_{K} \frac{dx}{f_*(x -y_{F_j})}.$$
Because $K$ has unique center $y_K = 0$, we conclude that $|y_{F_j} |\to 0.$
\end{proof}

We now introduce the relative surface energy and the anisotropic coarea formula.
 Given an open set $A$ and a set of finite perimeter $E$, the anisotropic surface energy of $E$ \textit{relative to $A$} is defined by  
$$\PP(E;A) = \underset{\partial^* E \cap A}{\int} f(\nu_E(x))\, d\Hn(x).$$
 For a Lipschitz function $u:\Rn \to \mathbb{R}$ and an open set $E$, the anisotropic coarea formula states that
$$\int_E f(-\nabla u(x))\,dx = \int_0^{\infty}\PP(\{ u>r\};E)\, dr.$$
The anisotropic coarea formula is proved in the same way as the coarea formula (see, for instance, \cite[Theorem 13.1]{maggi2012sets}), replacing the Euclidean norm with $f$ and $f_*$ and using \eqref{TV}.
When $u$ is bounded by a constant $C$ on $E$, then applying the anisotropic coarea formula to $w = C-u$ yields
\begin{align*}\int_E f(\nabla u(x))\,dx&= \int_E f(-\nabla w(x))\, dx =\int_0^C \PP(\{C-u>r\}; E) \,dt\\
&= \int_0^C \PP(\{u <C-r\} ; E) \,dr = \int_0^C \PP(\{u<r\} ; E) \, dr
\end{align*}
Moreover, approximating by simple functions, we may produce a weighted version: 
$$\int_E f(\nabla u(x)) g(f_*(x)) \,dx= \int_0^{\infty} \PP(\{u<r\};E) g(r) \,dr$$
whenever $g: \mathbb{R} \to[0,\infty] $ is a Borel function. We will frequently use this weighted version with $u(x) = f_*(x)$, $E$ a bounded set, and $g(r) = \frac{1}{r},$ which, using (\ref{equal1}), gives 
\begin{equation}\label{awcoarea}
\int_E \frac{dx}{f_*(x)} = \int_0^{\infty} \frac{\PP( \{f_*(x) <r\} ; E )}{r}\, dr= \int_0^{\infty} \frac{\PP( rK; E )}{r}\, dr.
\end{equation}

 We conclude this section with the following Poincar\'e-type inequality, which shows that $\BPP(E)$ controls $\alpha_{\PP}(E)$ for all sets $E$ of finite perimeter.
\begin{proposition}\label{prop1} \label{poincare}
 There exists a constant $C(n)$ such that if $E$ is a set of finite perimeter with $0<|E|<\infty$, then 
\begin{equation}\label{bd}
\alpha_{\PP}(E) + \delta_{\PP}(E)^{1/2} \leq C(n) \BPP(E).
\end{equation}
\end{proposition}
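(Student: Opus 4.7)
The plan is to derive both bounds by direct manipulation of the identity \eqref{betaform} together with the rigidity statement \eqref{maxgamma} and the Figalli--Maggi--Pratelli inequality \eqref{FiMPStatement}. First, I observe that every quantity in the inequality is invariant under translations and dilations of $E$: this is immediate for $\alpha_{\PP}$ and $\delta_{\PP}$, and for $\BPP$ it follows from the scaling $\PP(\lambda E) = \lambda^{n-1}\PP(E)$, $\g_{\PP}(\lambda E)=\lambda^{n-1}\g_{\PP}(E)$ (via the change of variables $x = \lambda x'$ in the definition \eqref{gamma}), and $n|K|^{1/n}|\lambda E|^{1/n'}=\lambda^{n-1}n|K|^{1/n}|E|^{1/n'}$. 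Thus we may rescale so that $|E|=|K|$.

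With this normalization, $n|K|^{1/n}|E|^{1/n'}=n|K|$ and the identity $(n-1)\g_{\PP}(K)=\PP(K)=n|K|$ lets us rewrite
\begin{equation*}
\delta_{\PP}(E)-\BPP(E)^{2}=\frac{\PP(E)}{n|K|}-1-\frac{\PP(E)-(n-1)\g_{\PP}(E)}{n|K|}=\frac{(n-1)\bigl(\g_{\PP}(E)-\g_{\PP}(K)\bigr)}{n|K|}.
\end{equation*}
By \eqref{maxgamma}, the right-hand side is nonpositive, so $\delta_{\PP}(E)^{1/2}\leq\BPP(E)$. This takes care of the deficit term in \eqref{bd} with constant $1$.

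For the asymmetry term, the quantitative Wulff inequality of Figalli, Maggi, and Pratelli \eqref{FiMPStatement} gives $\alpha_{\PP}(E)\leq \sqrt{C(n)}\,\delta_{\PP}(E)^{1/2}$, and combining this with the bound just established yields $\alpha_{\PP}(E)\leq \sqrt{C(n)}\,\BPP(E)$. Adding the two estimates produces \eqref{bd} with constant $1+\sqrt{C(n)}$, where $C(n)$ is the dimensional constant from \eqref{FiMPStatement}. There is no real obstacle here: the only subtle point is recognizing that the decomposition \eqref{betaform}, combined with the sharp maximality \eqref{maxgamma} of $\g_{\PP}$ on the Wulff shape, already contains a clean comparison between $\delta_{\PP}$ and $\BPP^{2}$, so the bulk of the work has been done in Section~\ref{abg}.
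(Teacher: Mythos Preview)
Your proof is correct, but it takes a genuinely different route from the paper's. You observe that \eqref{betaform} and \eqref{maxgamma} immediately give $\delta_{\PP}(E)\le \BPP(E)^2$ once $|E|=|K|$, and then invoke the Figalli--Maggi--Pratelli inequality \eqref{FiMPStatement} as a black box to convert the deficit bound into a bound on $\alpha_{\PP}$. The paper, by contrast, does \emph{not} use \eqref{FiMPStatement} here at all: after translating so that $E$ has center zero, it bounds $\g_{\PP}(K)-\g_{\PP}(E)$ from below by comparing $\int_{K\setminus E}\frac{dx}{f_*(x)}$ and $\int_{E\setminus K}\frac{dx}{f_*(x)}$ against the corresponding integrals over the $K$-annuli of the same mass, and then uses the strict concavity of $t\mapsto (1+t)^{1/n'}$ to obtain directly
\[
\BPP(E)^2 \ge \delta_{\PP}(E) + c(n)\,\alpha_{\PP}(E)^2.
\]
Your argument is shorter and perfectly valid, since \eqref{FiMPStatement} is an independent external result with no circularity; what it costs is that the proposition becomes a corollary of a much deeper theorem, whereas the paper's proof is entirely elementary and self-contained. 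The paper's approach also yields the slightly sharper additive decomposition above, which makes transparent that the Poincar\'e-type control of $\alpha_{\PP}$ by $\BPP$ holds for reasons internal to the structure of $\BPP$ itself rather than by appeal to the full quantitative Wulff inequality.
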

\begin{proof} 
We follow the proof of the analogous result for the perimeter in \cite{fuscojulin11}.
Due to the scaling and translation invariance of $\alpha_{\PP}, \BPP$, and $\delta_{\PP}$, 
we may assume that $|E| = |K|=1$ and that $E$ has center zero. We have 
$$ \g_{\PP}(K) - \g_{\PP}(E)=  \int_K \frac{dx}{f_*(x)}-\int_E \frac{dx}{f_*(x) } 
=\int_{K\setminus E} \frac{dx}{f_*(x)}-\int_{E\setminus K} \frac{dx}{f_*(x)} .
$$
Therefore, adding and subtracting $\PP(K)/n=(n-1)\g_{\PP}(K)/n$ in \eqref{betaform}, we have 
\begin{align*}
\BPP(E)^2 &
=\delta_{\PP}(E)
+\frac{n-1}{n}\left( \int_{K\setminus E} \frac{dx}{f_*(x)}  -
\int_{E\setminus K} \frac{dx}{f_*(x)} \right).
\end{align*}
We want to bound the  final two integrals from below by $\alpha_{\PP}(E)^2$.
To this end, we let $a:= |E\setminus K| = |K\setminus E|$ and 
define the $K$-annuli $A_{R,1} = K_R\setminus K$ 
and $A_{1,r}=K\setminus K_r$, where  $R> 1 >r$ 
are chosen such that $|A_{R,1} | =|A_{1,r} | =a$.
 In particular, $R = (1+a)^{1/n} $ and $r= (1-a)^{1/n}.$ 
 By (\ref{maxgamma}) and (\ref{awcoarea}),
\begin{align*}
\int_{K\setminus E} \frac{dx}{f_*(x) } 
\geq \int_{A_{1,r}} \frac{dx}{f_*(x) }
= \int_r^1 \frac{\PP(sK)}{s}\,ds
 = \int_r^1 ns^{n-2}\, ds = \frac{n}{n-1} [1-r^{n-1}]
\end{align*}
and 
\begin{align*}
\int_{E\setminus K} \frac{dx}{f_*(x) } 
\leq \int_{A_{R,1}} \frac{dx}{f_*(x)}
 = \int_1^R\frac{\PP(sK)}{s} \,ds
  = \int_1^R ns^{n-2} \,ds
 = \frac{n}{n-1}[R^{n-1} - 1].
\end{align*}
Subtracting the second from the first, we have 
$$ \frac{n-1}{n} \left(\int_{K\setminus E} \frac{dx}{f_*(x)}  -\int_{E\setminus K} \frac{dx}{f_*(x)}\right)\geq 2 - r^{n-1} - R^{n-1}.$$

The function $g(t) = (1+t)^{1/n'}$ is function is strictly concave, with $\frac{1}{2}(g(t) + g(s) ) \leq g(\frac{t}{2} + \frac{s}{2}) - C|t-s|^2$, and therefore
$2 - [ (1+a )^{1/n'} + (1-a)^{1/n'}] \geq 8C|a|^2.$ Thus 
\begin{align*}
\BPP(E)^2  \geq \delta_{\PP}(E) +  [ 2 - (1-a)^{1/n'} - (1+a)^{1/n'}]  &\geq \delta_{\PP}(E) + 8C|a|^2 =  \delta_{\PP} +2C \left(|E\setminus K| + |K \setminus E|\right)^2\\
& = \delta_{\PP}(E) +2C |K\Delta E|^2\geq \delta_{\PP}(E)  +2C\alpha_{\PP}(E)^2.
\end{align*}
\end{proof}

\section{Stability for General Anisotropic Surface Energy $\PP$}\label{generalsection}

In this section, we prove Theorem~\ref{th1}. We begin by introducing a few lemmas that are needed the proof.
The first allows us to reduce the problem to sets contained in some fixed ball.
\begin{lemma}\label{compact}
 There exist constants $R_0>0$ and $C>0$ depending only on $n$ and $M_{\PP}$
such that, given a set of finite perimeter $E$ with $|E|= |K|$, 
 we may find a set $E'$ such that $|E' | = |K|, \ E'\subset B_{R_0}$,
  and 
\begin{equation}\label{BOUND}\BPP(E)^2 
\leq \BPP(E')^2 + C \delta_{\PP}(E), \mbox{ 		} \qquad
\delta_{\PP}(E') \leq C \delta_{\PP}(E).
\end{equation}
\end{lemma}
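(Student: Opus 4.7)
The plan is to split the argument into two cases according to the size of the deficit. For $\delta_\PP(E)$ bounded below by a fixed $c_0 = c_0(n,M_\PP)$, take $E' := K$, so that $\delta_\PP(E') = 0$ and $\BPP(E') = 0$; the inequalities in \eqref{BOUND} then follow from the estimate $\BPP(E)^2 \leq \PP(E)/(n|K|^{1/n}|E|^{1/n'}) = 1 + \delta_\PP(E) \leq (1+c_0^{-1})\,\delta_\PP(E)$, obtained from \eqref{betaform} and the nonnegativity $\g_\PP(E) \geq 0$. The bounds are thus nontrivial only in the regime $\delta_\PP(E) < c_0$.

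In that regime, first use the quantitative Wulff inequality \eqref{FiMPStatement} together with a translation to arrange $|E \Delta K| \leq C \delta_\PP(E)^{1/2}$, so that $m(R) := |E \setminus B_R| \leq C\delta_\PP(E)^{1/2}$ for every $R \geq M_\PP$. Then truncate $E$ by intersecting with a ball $B_r$ for a carefully chosen $r$ in a fixed annulus $(R_0, 2R_0)$ with $R_0$ a suitable multiple of $M_\PP$. Applying the anisotropic isoperimetric inequality simultaneously to $E \cap B_s$ and $E \setminus B_s$, and exploiting the smallness of $\delta_\PP(E)$, yields the key bound
\[
m(s)^{1/n'} \leq C\bigl(\delta_\PP(E) + \Hn(E \cap \partial B_s)\bigr)
\]
whenever $m(s)$ is small. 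Bootstrapping this via the coarea/slicing identity $\int_a^b \Hn(E \cap \partial B_s)\,ds \leq m(a)$, iterated over a finite nested sequence of sub-intervals of $(R_0, 2R_0)$ (with a number of steps depending only on $n$), produces a cut-off radius $r$ with $m(r) \leq C\delta_\PP(E)^{n'}$ and $\Hn(E \cap \partial B_r) \leq C\delta_\PP(E)$.

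With such $r$ in hand, set $E_r := E \cap B_r$ and $E' := \lambda E_r$ where $\lambda = (|K|/|E_r|)^{1/n}$. Since $\lambda^n - 1 = m(r)/|E_r| \lesssim \delta_\PP(E)^{n'}$, the inclusion $E' \subset B_{\lambda r} \subset B_{R_0}$ holds after absorbing the factor $\lambda$ into $R_0$. The standard truncation bound $\PP(E_r) \leq \PP(E;B_r) + M_\PP \Hn(E \cap \partial B_r)$ combined with the scaling $\PP(E') = \lambda^{n-1}\PP(E_r)$ gives $\delta_\PP(E') \leq C\delta_\PP(E)$. For the $\BPP$ comparison, use \eqref{betaform} to write
\[
n|K|\bigl(\BPP(E)^2 - \BPP(E')^2\bigr) = \bigl(\PP(E) - \PP(E')\bigr) - (n-1)\bigl(\g_\PP(E) - \g_\PP(E')\bigr).
\]
The first parenthesis is bounded by $C\delta_\PP(E)$ as in the deficit estimate, and the second by $C\delta_\PP(E)$ thanks to the H\"older continuity of $\g_\PP$ from Proposition~\ref{convergence}, using $|E\Delta E'| \leq m(r) + (\lambda^n-1)|E_r| \leq C\delta_\PP(E)^{n'}$.

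The main obstacle will be the bootstrap improvement from $m(R_0) \leq C\delta_\PP(E)^{1/2}$ to $m(r) \leq C\delta_\PP(E)^{n'}$ in a fixed ball. Without this improvement, the H\"older exponent $1/n'$ of $\g_\PP$ would only give $|\g_\PP(E) - \g_\PP(E')| \leq C\delta_\PP(E)^{1/(2n')}$, well short of the needed $C\delta_\PP(E)$. The iterated slicing, which leverages the super-linear relation $m \leq C\,\Hn^{n'}$ concealed in the two-sided isoperimetric inequality, is the technical heart of the argument.
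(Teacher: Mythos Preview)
Your outline is essentially correct and would produce a valid proof, but there is one small gap and one substantial strategic difference from the paper's argument.

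\textbf{The gap.} Your bound $|E\Delta E'| \leq m(r) + (\lambda^n-1)|E_r|$ implicitly uses $|E_r \Delta \lambda E_r| \leq (\lambda^n-1)|E_r|$, which fails for non-star-shaped sets (think of a thin annulus: the ratio can be arbitrarily large). This is easily repaired by the estimate $|E_r \Delta \lambda E_r| \leq C(\lambda-1)\,r\,P(E_r)$, obtained by writing $\chi_{\lambda E_r}(x) - \chi_{E_r}(x)$ as an integral in the scaling parameter and using $|D\chi_{E_r}|(\Rn) = P(E_r)$; since $r \leq 2R_0$ and $P(E_r)$ is bounded, this still yields $|E\Delta E'| \leq C\delta_\PP(E)^{n'}$.

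\textbf{The comparison.} The paper's proof avoids your bootstrap entirely by a one-line observation: since the truncation satisfies $\tilde{E} \subset E$, one has $\g_\PP(\tilde{E}) \leq \g_\PP(E)$ directly from the definition of $\g_\PP$ as a supremum of integrals over $E$. With $E' = r\tilde{E}$ and $\g_\PP(E') = r^{n-1}\g_\PP(\tilde{E})$, this gives
\[
\g_\PP(E') - \g_\PP(E) \leq (r^{n-1}-1)\,\g_\PP(E) \leq (r^{n-1}-1)\,\g_\PP(K) \leq C\,\frac{|E\setminus\tilde{E}|}{|\tilde{E}|}\,,
\]
so one only needs the standard truncation bound $|E\setminus\tilde{E}| \leq C\delta_\PP(E)$ (which is the content of the cited \cite[Theorem~4.1]{maggi2008some}) rather than your sharper $m(r) \leq C\delta_\PP(E)^{n'}$. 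In other words, what you identify as ``the technical heart of the argument'' is unnecessary once you exploit monotonicity of $\g_\PP$ under inclusion; the H\"older continuity route works but demands more iterations. Your approach is self-contained (you reprove the truncation) and does not rely on the black-box reference, which is a mild advantage; the paper's is shorter and makes the role of the inclusion $\tilde{E}\subset E$ transparent.
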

\begin{proof}
A simple adaptation of the proof of \cite[Theorem~$4.1$]{maggi2008some} 
ensures that we may find constants 
$\delta_0, C_0,$ $C_1$, and $\tilde{R}_0$ depending on $n$ and $M_{\PP}$ 
such that $C_0\delta_0<1/2$ and the following holds: if
 $\delta_{\PP}(E) \leq \delta_0,$ 
then there exists a set $\tilde{E}\subset E$ such that $\tilde{E} \subset B_{\tilde{R}_0}$ and 
\begin{equation}\label{cut}
|\tilde{E}| \geq |K|( 1 -  C_1 \delta_{\PP}(E)), \qquad \PP(\tilde{E}) \leq \PP(E) + C_0\delta_{\PP}(E) |E|^{1/n'}.
\end{equation}

If $\delta_{\PP}(E) > \delta_0$, then
$$\BPP^2 (E)
\leq \frac{\PP(E)}{n|K|}
 = \delta_{\PP}(E)+1 
\leq \frac{1+\delta_0}{\delta_0} \delta_{\PP}(E).$$
Simply taking $E' = K$,
 we have $\delta_{\PP}(E')
 \leq \delta_{\PP}(E)$
 and 
$\BPP(E)^2 \leq \frac{1+\delta_0}{\delta_0} \delta_{\PP}(E)$, proving \eqref{BOUND}.

On the other hand, if $\delta_{\PP}(E) \leq \delta_0$, let $E' = r \tilde{E}$ with $r\geq 1$ such that $|E'|=|r\tilde{E}|=|E|.$ By \eqref{betaform},
\begin{equation}\label{Bounded set}
\begin{split}
\BPP(E)^2 -\BPP(E')^2 
& = \frac{\PP(E) - \PP(E')}{n|K|} 
+ \frac{n-1}{n|K|} \left( \g_{\PP}(E' ) - \g_{\PP}(E) \right) \\
&\leq \delta_{\PP}(E)
+ \frac{n-1}{n|K|} \left(r^{n-1} \g_{\PP}(\tilde{E}) - \g_{\PP}(E) \right).
\end{split}
\end{equation}
Since $\tilde{E}\subset E$, $\g_{\PP}(\tilde{E} ) \leq \g_{\PP}(E)$, which implies that
\begin{align*} 
 \frac{n-1}{n|K|} \left(r^{n-1} \g_{\PP}(\tilde{E}) - \g_{\PP}(E) \right)\leq 
  \frac{n-1}{n|K|} (r^{n-1} -1) \g_{\PP}(E).
\end{align*}
By \eqref{maxgamma} and the fact that $\g_{\PP} (K)= n|K|/(n-1)$,
\begin{align*}
    \frac{n-1}{n|K|} (r^{n-1} -1) \g_{\PP}(E)& \leq   \frac{n-1}{n|K|}(r^{n-1} -1) \g_{\PP}(K)
=r^{n-1} -1,
\end{align*}
and since $r\geq 1,$ 
$$ r^{n-1} -1\leq r^n-1=\frac{ |E| - |\tilde{E}|}{|\tilde{E}|}.$$
The first part of $(\ref{cut})$ implies that 
$$\frac{|E| - |\tilde{E}| }{|\tilde{E}|} \leq \frac{C_1 \delta_{\PP}(E)}{1 -C_1 \delta_{\PP}(E)}\leq \frac{C_1}{1-C_1\delta_0} \delta_{\PP}(E).$$
We have therefore shown that 
$$  \frac{n-1}{n|K|} \left(r^{n-1} \g_{\PP}(\tilde{E}) - \g_{\PP}(E) \right) \leq  \frac{C_1}{1-C_1\delta_0} \delta_{\PP}(E);$$
this together with \eqref{Bounded set} concludes the proof of the first claim in \eqref{BOUND}.

In the direction of the second claim in \eqref{BOUND}, the first and second parts of \eqref{cut} respectively imply that 
$$\PP(E')  = r^{n-1} \PP(\tilde{E})
 \leq \frac{\PP(\tilde{E}) }{(1-C_1 \delta_{\PP}(E))^{1/n'}} \leq \frac{\PP(E)+ C_0 \delta_{\PP}(E) |E|^{1/n'}}{(1-C_1 \delta_{\PP}(E))^{1/n'}}.$$
A Taylor expansion in $\delta_{\PP}(E)$ of the right hand side shows that 
\begin{align*}
\PP(E')  & \leq  \PP(E)+ C_0 \delta_{\PP}(E) |E|^{1/n'} +\frac{n-1}{n}C_1 \delta_{\PP}(E)\PP(E)+ O(\delta_{\PP}(E)^2)
\\
&
\leq \PP(E) + C \delta_{\PP}(E) \PP(E)
\end{align*}
for $\delta_0$ chosen sufficiently small.
Thus 
$$\delta_{\PP}(E') = \frac{\PP(E') - \PP(K)}{n|K|} \leq 
 \frac{\PP(E') - \PP(E)}{n|K|}
 \leq \frac{C\PP(E) \delta_{\PP}(E)}{n|K|} \leq C \delta_{\PP}(E),$$
 since $\PP(E) \leq \PP(K) +n|K| \delta_0$.
Finally, since $\tilde{E} \subset B_{\tilde{R}_0}$ and $E' = r \tilde{E}$ with $r \leq 1/(1-C_1\delta_0)^{1/n}$, we have $E' \subset B_{R_0}$ for $R_0 = r \tilde{R}_0$.
\end{proof}

Let us now consider the functional 
\begin{equation}\label{Q}Q(E) = \PP(E) +  \frac{|K|m_{\PP}}{8M_{\PP}}\left| \BPP(E)^2 - \e^2\right| + \Lambda\big| |E| - |K| \big| ,
\end{equation}
with $0<\e<1$ and $\Lambda>0$.
\begin{lemma}\label{LSC} 
A minimizer exists for the problem 
\begin{equation*}\label{Minimization}\min \left\{ Q(E) \, : \, E \subset B_{R_0}\right\}
\end{equation*}
for $\Lambda > 4n$ and $\e>0 $ sufficiently small. Moreover, any minimizer $F$ satisfies 
\begin{equation}\label{Perimeter Bounds} 
|F| \geq \frac{|K|}{2}, \qquad \PP(F)  \leq 2n|K|.
\end{equation}

\end{lemma}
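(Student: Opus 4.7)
My strategy is the direct method of the calculus of variations in $\mathrm{BV}(B_{R_0})$. Since \eqref{betaform} and $(n-1)\g_{\PP}(K)=\PP(K)=n|K|$ give $\BPP(K)=0$, the Wulff shape is an admissible competitor with
\[
Q(K) = n|K| + \frac{|K|m_{\PP}}{8M_{\PP}}\e^2 \leq n|K| + \frac{|K|}{8},
\]
using $m_{\PP}\leq M_{\PP}$ and $\e \leq 1$. For any minimizing sequence $\{E_j\}\subset B_{R_0}$, each of the three nonnegative summands in $Q(E_j)\leq Q(K)+1$ is uniformly bounded. Thus $\PP(E_j)$ is controlled, and $\Lambda||E_j|-|K|| \leq n|K|+|K|/8+1$ together with $\Lambda > 4n$ forces $|E_j|\geq |K|/2$ for $j$ large. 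Standard compactness for sets of finite perimeter in $B_{R_0}$ then yields, up to subsequence, $E_j \to F$ in $L^1$ for some $F\subset B_{R_0}$ with $|F|\geq |K|/2$.

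The crux is to prove $Q(F) \leq \liminf Q(E_j)$. The volume term is continuous under $L^1$ convergence, and Proposition~\ref{convergence} provides $\PP(F)\leq \liminf \PP(E_j)$ together with $\g_{\PP}(E_j)\to \g_{\PP}(F)$. The delicate piece is $|\BPP^2-\e^2|$: since $\BPP$ is only lower semicontinuous (Remark~\ref{RMK: LSC}) and the outer absolute value is non-monotone, this term need not be lower semicontinuous in isolation. To handle it, I pass to a further subsequence along which $\PP(E_j)\to L$. Formula \eqref{betaform}, together with the above convergences, forces
\[
\BPP(E_j)^2 \to b^2 := \frac{L-(n-1)\g_{\PP}(F)}{A}, \qquad b^2 - \BPP(F)^2 = \frac{L - \PP(F)}{A} \geq 0,
\]
where $A := n|K|^{1/n}|F|^{1/n'}$. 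Applying the reverse triangle inequality $|b^2-\e^2| - |\BPP(F)^2-\e^2| \geq -(b^2-\BPP(F)^2)$ gives
\[
\lim Q(E_j) - Q(F) \geq (L-\PP(F))\left(1 - \frac{|K|m_{\PP}}{8M_{\PP}\,A}\right),
\]
and the bound $|F|\geq |K|/2$ forces $A \geq n|K|/2^{1/n'} \geq |K| \geq |K|m_{\PP}/(8M_{\PP})$, making the right-hand side nonnegative. Hence $F$ is a minimizer.

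The properties \eqref{Perimeter Bounds} of any minimizer $F$ then follow by repeating the competitor argument from the first paragraph: $\PP(F)\leq Q(F)\leq Q(K)\leq n|K|+|K|/8 \leq 2n|K|$, and $\Lambda||F|-|K|| \leq Q(K) < \Lambda|K|/2$ when $\Lambda>4n$ gives $|F|\geq |K|/2$. The main obstacle is precisely the failure of lower semicontinuity of $|\BPP^2-\e^2|$ in isolation; it is resolved because the coefficient $|K|m_{\PP}/(8M_{\PP})$ is chosen small enough to be absorbed by the lower semicontinuity excess of $\PP$ via \eqref{betaform}, and because the volume penalty prevents the limit set from degenerating.
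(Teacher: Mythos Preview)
Your proof is correct and follows essentially the same approach as the paper: both use the direct method, first establish $|F|\geq |K|/2$ from the volume penalty by comparing against $Q(K)$, and then overcome the non-lower-semicontinuity of $|\BPP^2-\e^2|$ by exploiting \eqref{betaform} to show that its defect is at most $(L-\PP(F))/A$, which is absorbed by the lower-semicontinuity excess of $\PP$ thanks to the small coefficient $|K|m_{\PP}/(8M_{\PP})$. One minor sloppiness: in your first paragraph the bound $Q(E_j)\leq Q(K)+1$ need not give $|E_j|\geq |K|/2$ if $|K|$ is small; you should use $Q(E_j)\leq Q(K)+|K|\eta$ for $\eta$ small (as you effectively do in the final paragraph for the minimizer itself).
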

\begin{proof} Let $\overline{Q} = \inf\{Q(E) :  E \subset B_{R_0} \}$, 
and let $\{F_j\}$ be a sequence such that $Q(F_j) \to \overline{Q}$. Since $F_j \subset B_{R_0}$ and $\PP(F_j)<2\overline{Q}$ for $j$ large enough, up to a subsequence, $F_j \to F$ in  $L^1$ for some $F \subset B_{R_0}$. The lower semicontinuity of $\PP$ (Proposition~\ref{convergence}(1)) ensures that $\PP(F) <\infty$.

We first show that $|F| \geq\frac{|K|}{2}$. For any $\eta>0$, $Q(F_j) \leq \overline{Q} + |K|\eta$ for $j$ sufficiently large.
 Furthermore, $\overline{Q} \leq Q(K) = \PP(K) + \frac{\e^2 |K|\m}{8\M}$, so 
$$\big| |F_j| - |K| \big| \leq \frac{1}{\Lambda}\left( \PP(K) +|K|\eta +\frac{\e^2 |K|\m}{8\M} \right)
=  \frac{|K|}{\Lambda} \left(n + \eta +\frac{\e^2\m}{8\M} \right)\leq  \frac{|K|}{2}$$
for $\e$ and $\eta$ sufficiently small. Therefore $|F_j|\geq \frac{|K|}{2},$ implying that $|F| \geq \frac{|K|}{2}$ as well.

We now show that $\liminf Q(F_j) \geq Q(F),$ so $F$ is a minimizer.
Recalling (\ref{betaform}), we have 
 \begin{align*}
 Q(F_j)  
	&= \PP(F_j) + \frac{|K|\m}{8\M} \bigg| \frac{\PP(F_j)-(n-1)\g_{\PP}(F_j) }{n|K|^{1/n}|F_j|^{1/n'}} - \e^2 \bigg| + \Lambda \big| |F_j| - |K|\big|
  \\
 &  \geq 
  \PP(F_j ) +\frac{|K|\m}{8\M} \bigg| \frac{\PP(F)-(n-1)\g_{\PP}(F_j)}{n|K|^{1/n}|F_j|^{1/n'}} - \e^2 \bigg|
  -\frac{|K|\m}{8\M} \bigg|\frac{ \PP(F_j) - \PP(F)}{n|K|^{1/n}|F_j|^{1/n'}} \bigg|  + \Lambda \big| |F_j|- |K|\big|.
  \end{align*}
    Let $a={\liminf}\mbox{ }\PP(F_j)$. Up to a subsequence, we may take this limit infimum to be a limit.
   By the lower semicontinuity of $\PP$, $a \geq \PP(F)$. Furthermore, $\g_{\PP}$ is continuous by Proposition~\ref{convergence}$(2)$, so
\begin{align*}
\underset{j\to \infty}{\liminf}\mbox{ } Q(F_j)& \geq Q(F) + \left(a-\PP(F)\right) - \frac{|K|^{1/n'}\m}{8n|F|^{1/n'}\M}\left|a - \PP(F)\right| \\
& = Q(F) + \left(a-\PP(F)\right)\Big( 1  - \frac{|K|^{1/n'}\m}{8n|F|^{1/n'}\M }\Big)\\
&\geq Q(F) + \left(a-\PP(F)\right)\Big(1  - \frac{2^{1/n'}\m}{8n\M }\Big)
\geq Q(F).
\end{align*}
Finally, $\e<1$ and therefore $\PP(F) \leq Q(F) \leq Q(K) \leq 2n|K|.$
\end{proof}

The following lemma shows that a minimizer of \eqref{Q} satisfies uniform density estimates.
\begin{lemma}\label{UDE}
Suppose $F$ is a minimizer of  $Q(E)$ as defined in (\ref{Q}) among all sets $E\subset B_{R_0}$.
Then there exist $r_0>0$ depending on $n, \Lambda,$ and $|K|$ and  $0<c_0<1/2$ depending on $n$ and $\Lambda$ 
 such that for any $x \in \partial^* F$ and for any $r<r_0$, 
\begin{equation}
\label{UDES}
\frac{c_0\m^n}{\M^n} \, \omega_n r^n \leq |B_r(x) \cap F| \leq \bigg(1-\frac{c_0 \m^n}{ \M^n} \bigg)\, \omega_n r^n.
\end{equation}
\end{lemma}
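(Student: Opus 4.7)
The plan is to adapt the standard almost-minimizer density estimate for the $\PP$-perimeter, tracking carefully the additional $\BPP^2$-penalty in $Q$. For the lower bound, fix $x \in \partial^* F$, set $m(r) = |F \cap B_r(x)|$, and consider the competitor $F_1 = F \setminus B_r(x)$. For a.e.\ small $r$, by the coarea formula
$$\PP(F) - \PP(F_1) \geq \PP(F; B_r(x)) - \M\, m'(r), \qquad m'(r) = \Hn(F \cap \partial B_r(x)),$$
while minimality of $F$ against $F_1 \subset B_{R_0}$ gives
$$\PP(F) - \PP(F_1) \leq \frac{|K|\m}{8\M}\bigl|\BPP(F)^2 - \BPP(F_1)^2\bigr| + \Lambda\, m(r).$$
The goal is to combine these into a differential inequality $c\, m(r)^{1/n'} \leq m'(r)$ which, upon integration, yields $m(r) \geq c_0 (\m/\M)^n \omega_n r^n$.

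The key technical step is estimating $|\BPP(F)^2 - \BPP(F_1)^2|$. Using the decomposition \eqref{betaform}, I will split this difference into three pieces: one proportional to $|\PP(F) - \PP(F_1)|$; one controlled by $|\g_{\PP}(F) - \g_{\PP}(F_1)| \lesssim m(r)^{1/n'}$ via Proposition~\ref{convergence}(2); and a third of order $\bigl||F|^{1/n'} - |F_1|^{1/n'}\bigr| \lesssim m(r)$, the latter bound following from the mean value theorem using $|F|, |F_1| \geq |K|/4$ (from Lemma~\ref{LSC} and smallness of $m(r)$). The essential observation is that the coefficient $\frac{|K|\m}{8\M} \cdot \frac{1}{n|K|^{1/n}|F|^{1/n'}}$ multiplying $|\PP(F) - \PP(F_1)|$ is at most $\tfrac{1}{8}$ thanks to the prefactor in $Q$ and the volume bound; in the case $\PP(F) \geq \PP(F_1)$ this piece can therefore be absorbed on the left, while in the opposite case $\PP(F; B_r(x)) \leq \M m'(r)$ holds directly. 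Either way, I arrive at
$$\PP(F; B_r(x)) \leq \M\, m'(r) + c_n |K|^{1/n}\tfrac{\m}{\M}\, m(r)^{1/n'} + C\, m(r),$$
where $c_n$ depends only on $n$ and $C$ depends on $n, \Lambda, |K|, \m, \M$.

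Combining with the Wulff inequality $n|K|^{1/n} m(r)^{1/n'} \leq \PP(F \cap B_r(x)) \leq \PP(F; B_r(x)) + \M m'(r)$ produces
$$n|K|^{1/n} m(r)^{1/n'} \leq 2\M\, m'(r) + c_n |K|^{1/n}\tfrac{\m}{\M}\, m(r)^{1/n'} + C\, m(r).$$
Since $c_n \m/\M < n$, the $m(r)^{1/n'}$ term on the right can be absorbed on the left, and for $r$ sufficiently small depending on $\Lambda$ and $|K|$, the $Cm(r)$ term is lower order and may also be absorbed. This gives the ODE $\tfrac{n|K|^{1/n}}{C'\M} m(r)^{1/n'} \leq m'(r)$, whose integration from $0$ yields $m(r) \geq (|K|/(C'\M)^n)\, r^n$; then $|K| \geq \omega_n \m^n$ furnishes the claimed lower bound with $c_0$ depending only on $n$. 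The upper density estimate follows from the symmetric argument applied to the competitor $F \cup B_r(x)$.

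The main obstacle is precisely the possibility that the $\BPP^2$-penalty, being of the same homogeneity as the perimeter, could dominate the Wulff inequality in the absorption step. The factor $\frac{|K|\m}{8\M}$ in the definition of $Q$ has been engineered so that the coefficient in front of $|\PP(F) - \PP(F_1)|$ is safely below $1$ (enabling the first absorption) and the coefficient in front of $m(r)^{1/n'}$ stays safely below $n|K|^{1/n}$ (enabling the second), so that both absorptions succeed.
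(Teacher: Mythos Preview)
Your approach is correct and arrives at the same conclusion, but it diverges from the paper's argument in how the term $|\g_{\PP}(F)-\g_{\PP}(F_1)|$ is handled. The paper bounds this difference \emph{pointwise by boundary quantities}: writing $\g_{\PP}$ as a boundary integral via the divergence theorem and using $\bigl|\tfrac{(x-y)\cdot\nu}{f_*(x-y)}\bigr|\le \tfrac{\M}{\m}f(\nu)$ gives
\[
|\g_{\PP}(F)-\g_{\PP}(G)|\le \tfrac{\M}{\m}\Bigl(\PP(F;B_r)+\int_{\partial B_r\cap F}f(\nu_{B_r})\,d\Hn\Bigr),
\]
so that the entire $\BPP^2$-difference contributes only perimeter terms (absorbed with coefficient $\tfrac12$) and volume terms. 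In fact the paper explicitly asserts that the H\"older bound of Proposition~\ref{convergence}(2) is not sharp enough here. Your argument shows this is overly cautious: using Proposition~\ref{convergence}(2) produces a contribution of size $\tfrac{\m}{2\M}|K|^{1/n}m(r)^{1/n'}$, and since $\tfrac{\m}{2\M}<1<n$, this term can indeed be absorbed into the left-hand side of the Wulff inequality $n|K|^{1/n}m(r)^{1/n'}\le\PP(F\cap B_r)$. The prefactor $\tfrac{|K|\m}{8\M}$ in $Q$ was evidently chosen precisely so that both absorptions succeed; your route exploits this fact directly and is slightly more streamlined.

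One minor bookkeeping point: you write that the constant $C$ in front of $m(r)$ depends on $\m,\M$, yet then claim $r_0$ depends only on $n,\Lambda,|K|$. In fact, tracking the volume term as in the paper's \eqref{D1} shows this constant is at most $4+\Lambda$ (up to a factor $\tfrac{1}{1-a}$ with $a\le\tfrac{1}{2n}$), so it depends only on $n$ and $\Lambda$; the stated dependence of $r_0$ is then justified.
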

\begin{proof} We follow the standard argument for proving uniform density estimates for minimizers of perimeter functionals; see, for example, \cite[Theorem $16.14$]{maggi2012sets}. The only difficulty arises when handling the term $\frac{|K|\m}{8\M} |\BPP(E)^2 - \e^2 |$ in $Q(E)$, as it scales like the surface energy. 

For any 
 $x_0 \in \partial^* F$, let $r<r_0$, where $r_0$ is to be chosen later in the proof and $r$ is chosen such that 
\begin{equation}\label{lineup}\Hn(\partial^* F \cap \partial B_r(x_0)) =0. \end{equation}
This holds for almost every $r>0$. Note that if \eqref{UDES} holds for almost every $r<r_0$, then it must hold for all $r<r_0$ by continuity; it is therefore enough to consider $r$ such that \eqref{lineup} holds. 
Let 
$G = F \setminus B_r(x_0)$. For simplicity, we will use the notation $B_r$ for $B_r(x_0).$
Because $F$ minimizes $Q$, 
$$\PP(F) + \frac{|K|\m}{8\M} 
\left|\BPP(F)^2 - \e^2 \right| 
+ \Lambda \big| |F| - |K| \big| 
\leq \PP(G)
+\frac{|K|\m}{8\M} \left|\BPP(G)^2 - \e^2\right| 
+ \Lambda \big| |G| - |K| \big| ,$$
and so rearranging and using the triangle inequality, we have
$$\PP(F) \leq \PP(G) 
+ \frac{|K|\m}{8\M} \left| \BPP(F)^2  - \BPP(G)^2 \right| 
+ \Lambda |F \cap B_r|.$$
We subtract $\PP(F; \Rn\setminus B_r)$ from both sides; this is the portion of the surface energy where $\partial^*F$ and $\partial^*G$ agree. We obtain 
\begin{align} \label{udeB}
\PP(F ; B_r) &
 \leq
  \int_{\partial B_r \cap F} f(\nu_{B_r})\, d\Hn
 +  \frac{|K|\m}{8\M} \left|\BPP(F)^2-\BPP(G)^2 \right| 
  + \Lambda |F \cap B_r|.
  \end{align}
Indeed, this holds because \eqref{lineup} implies that 
$$
\PP(G) = \PP(F; \Rn \setminus B_r) + \int_{\partial B_r \cap F} f(\nu_{B_r}) \,d\Hn.
$$

We must control the term $\frac{|K|\m}{8\M} \left|\BPP(F)^2-\BPP(G)^2 \right|$ and require a sharper bound than the one obtained using H\"{o}lder continuity of $\g_{\PP}$ shown in Proposition~\ref{convergence}(2). Indeed, we must show that the only contributions of this term are perimeter terms that match those in \eqref{udeB} and terms that scale like the volume and thus behave as higher order perturbations. We have 
\begin{align*}
\left| \BPP (F)^2  - \BPP(G)^2 \right| & = \frac{1}{n|K|^{1/n}} \left| \frac{\PP(F) - (n-1) \g_{\PP}(F)}{|F|^{1/n'} } - \frac{ \PP(G) - (n-1) \g_{\PP}(G)}{|G|^{1/n'}} \right| \\
& \leq \frac{2\PP(F)}{n|K|^{1/n}} \big||F|^{-1/n'} -|G|^{-1/n'} \big| 
+  \frac{\left| \PP(F) - \PP(G) \right| +(n-1) \left| \g_{\PP}(F) - \g_{\PP}(G) \right|}{n|K|^{1/n}|G|^{1/n'}}.
\end{align*}
The function $v(z)= 1 - (1-z)^{1/n'}$ is convex and increasing with $v(1) = 1$, hence $v(z) \leq z$ for $z\in[0,1]$. Thus, as $|G| = |F| - |F\cap B_r|,$  
\begin{equation}\label{aaa}\big| |F|^{-1/n'}  - |G|^{-1/n'} \big| =  |G|^{-1/n'}\left(1 - \left( 1- \frac{|F\cap B_r|}{|F|}\right)^{1/n'}\right) \leq \frac{|F\cap B_r|}{|G|^{1/n'}|F| }.
\end{equation}
Since $2|F| \geq |K|$ by \eqref{Perimeter Bounds}, $4|G| \geq |K| $ for $r_0$ sufficiently small depending on $n$, so the right hand side of  \eqref{aaa} is bounded by 
$8|K|^{-1-1/n'} |F\cap B_r| .$
The coefficient $\frac{2\PP(F)}{n|K|^{1/n}}$ is bounded by $4|K|^{1/n'}$ thanks to \eqref{Perimeter Bounds}, so
\begin{equation}\label{D1}\frac{2\PP(F)}{n|K|^{1/n}} \big||F|^{-1/n'} -|G|^{-1/n'} \big| 
\leq 32 |K|^{-1}|F \cap B_r|.
\end{equation}
Therefore, by \eqref{D1} and again using the facts that $4|G| \geq |K|$, $2|F|\geq |K|$, and $\m/\M \leq 1,$ we have shown that
\begin{equation}\label{bbound}
\frac{|K| \m}{8\M}| \BPP(F)^2 - \BPP(G)^2| \leq 4|F\cap B_r|+
 \frac{|\PP(F) -\PP(G)|}{2n} + \frac{\m}{\M}\frac{n-1}{2n} |\g_{\PP}(F) - \g_{\PP}(G)|.
\end{equation}
For the term $\left| \PP(F) - \PP(G) \right| $, using \eqref{lineup}, we have
\begin{equation}\label{D2}\left| \PP(F) - \PP(G) \right| = \left| \int_{\pa^* F} f(\nu_F) \,d\Hn - \int_{\pa^* G} f(\nu_G) \,d \Hn \right|\leq \PP(F; B_r) + \int_{\pa B_r \cap F} f(\nu_{B_r})\, d\Hn,
\end{equation}
using \eqref{lineup} and the fact that $\pa^*F$ and $\pa^*G$ agree outside of $B_r$.
Similarly, for the term  $\left| \g_{\PP}(F) - \g_{\PP}(G) \right|$,  when $\g_{\PP}(F) \geq \g_{\PP}(G)$, thanks to \eqref{lineup} we have
\begin{align*}
 \g_{\PP}(F) - \g_{\PP}(G) &
\leq \int_{\pa^* F } \frac{(x-y_F)\cdot \nu_F(x) }{f_*(x-y_F)}\, d\Hn - \int_{\pa^* G} \frac{(x-y_F)\cdot \nu_G(x)}{f_*(x-y_F)}\,  d\Hn\\
&\leq \frac{M_{\PP}}{m_{\PP}} \bigg( \PP(F; B_r) + \int_{\pa B_r \cap F} f(\nu_{B_r})\, d\Hn\bigg).
\end{align*}
The analogous inequality holds when $\g_{\PP}(G) \geq \g_{\PP}(F)$, so
\begin{equation}\label{D3}
| \g_{\PP}(F) - \g_{\PP}(G)|\leq \frac{M_{\PP}}{m_{\PP}} \bigg( \PP(F; B_r) + \int_{\pa B_r \cap F} f(\nu_{B_r})\, d\Hn\bigg).
\end{equation}
Combining \eqref{bbound}, \eqref{D2}, and \eqref{D3}, we have shown
\begin{equation}\label{udeC}
\frac{|K| \m}{8\M}
\left| \BPP (F)^2  - \BPP(G)^2 \right|  \leq 
4 |F \cap B_r|+ \frac{1}{2}  \bigg( \PP(F; B_r) + \int_{\pa B_r \cap F} f(\nu_{B_r})\, d\Hn\bigg).
\end{equation}
Combining (\ref{udeB}) and (\ref{udeC}) and rearranging, we have 
\begin{align*}\frac{1}{2} \PP( F; B_r)  &
\leq \frac{3}{2} \int_{\partial B_r \cap F} f(\nu_{B_r})\,d\Hn 
+ (4+ \Lambda )\left| F \cap B_r\right|.
\end{align*}
Proceeding in the standard way, we add the term $\frac{1}{2}\int_{\partial B_r \cap F} f(\nu_{B_r} )\,d\Hn $ to both sides, which gives
\begin{align*}
\frac{1}{2} \PP(F\cap B_r) &
\leq 2 \int_{\partial B_r\cap F} f(\nu_{B_r })\,d\Hn +(4+ \Lambda) \left|F \cap B_r\right|.
\end{align*}
By the Wulff inequality, $\PP(F\cap B_r) \geq n|K|^{1/n} |F\cap B_r|^{1/n'}$, and for $r_0$ small enough depending on $n, \Lambda$, and $|K|$, we may absorb the last term on the right hand side to obtain
\begin{align}\label{density}
\frac{n|K|^{1/n}|F\cap B_r|^{1/n'}}{4} 
\leq 2 \int_{\partial B_r\cap F} f(\nu_{B_r })\,d\Hn.
\end{align} 
Let $u(r) = |F \cap B_r|$, and thus $u'(r) = \Hn (\partial B_r \cap F)$, so the right hand side above is bounded by $2 M_{\PP} u'(r)$. Furthermore, $|K|^{1/n}\geq \m$,
so \eqref{density} yields the differential inequality 
 $$\frac{ n\m}{8\M}  \leq u'(r) u(r)^{-1/n'} = n (u^{1/n})'.$$
 Integrating these quantities over the interval $[0,r]$, we get
 $$\frac{\m r }{8\M} 
 \leq u(r)^{1/n} = |B_r \cap F|^{1/n},$$
 and taking the power $n$ of both sides yields the lower density estimate.
The upper density estimate is obtained by applying an analogous argument, using $G = F\cup B_r(x_0)$ as a comparison set  for $x_0 \in \partial^* F$ and $r<r_0$ satisfying \eqref{lineup}.
\end{proof}

The following lemma is a classical argument showing that a set that is close to $K $ in $L^1$ and satisfies uniform density estimates is close to $K$ in an $L^{\infty}$ sense.
\begin{lemma}\label{bound} Suppose that $F$ satisfies uniform density estimates as in \eqref{UDES}. Then there exists $C$ depending on $\m/\M$, $n$, and $\Lambda$ such that
$$ {\rm{hd}}(\pa F , \pa K)^n \leq C |F\Delta K|,$$
where ${\rm{hd}}(\cdot, \cdot) $
is the Hausdorff distance between sets.
In particular, for any $\eta>0$, there exists $\e>0$ such that if 
$|F\Delta K | < \e$, then $K_{1-\eta} \subset F \subset K_{1+\eta},$
where $K_{a} = aK.$
\end{lemma}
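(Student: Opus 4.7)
The plan is to derive the Hausdorff bound by a direct case analysis using the density estimates \eqref{UDES} together with the convex geometry of $K$, and then to obtain the $L^\infty$-containment $K_{1-\eta}\subset F\subset K_{1+\eta}$ as a straightforward consequence via a connectedness argument. Two elementary geometric facts about $K$ underlie the proof: since $K$ is convex with $B_{\m}\subset K\subset B_{\M}$, at every $y\in\partial K$ and every small radius $r$, a supporting halfspace at $y$ gives $|B_r(y)\setminus K|\geq c_1 r^n$, while the cone generated by the interior ball $B_{\m}(0)\subset K$ and the apex $y$ is contained in $K$ and yields $|B_r(y)\cap K|\geq c_1 r^n$, with $c_1=c_1(n,\m/\M)$.

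Set $h={\rm hd}(\partial F,\partial K)$. First I reduce to the regime $h\leq r_0$, noting that if $h$ exceeds $r_0$ then $|F\Delta K|$ is automatically bounded below by the absolute constant from the density estimate, and the inequality holds after adjusting $C$. Otherwise, either (a) some $x\in\partial F$ has $d(x,\partial K)\geq h/2$, or (b) some $y\in\partial K$ has $d(y,\partial F)\geq h/2$. In case (a), after approximating $x$ by a nearby point of $\partial^*F$, the ball $B_{h/4}(x)$ is disjoint from $\partial K$; convexity of $K$ forces it to lie entirely in $K$ or entirely in $K^c$, and applying \eqref{UDES} in either alternative yields $|K\setminus F|\geq c(h/4)^n$ or $|F\setminus K|\geq c(h/4)^n$. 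In case (b), $B_{h/2}(y)$ is disjoint from $\partial^*F$; by the density $0/1$ dichotomy for points off the essential boundary together with the connectedness of the open ball, up to a null set $B_{h/2}(y)\subset F$ or $B_{h/2}(y)\subset F^c$, and both alternatives contribute at least $c_1(h/2)^n$ to $|F\Delta K|$ by the geometric facts above. Combining the cases gives $h^n\leq C|F\Delta K|$.

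For the $L^\infty$-containment, $B_{\m}\subset K$ and convexity give $(1-\eta)K+\eta B_{\m}\subset K$, so $d(x,\partial K)\geq\eta\m$ for $x\in K_{1-\eta}$; the bound $f_*(w)\leq|w|/\m$ (a consequence of $B_{\m}\subset K$) similarly shows $d(x,K)\geq\eta\m$ for $x\notin K_{1+\eta}$. Choosing $\varepsilon$ so that $(C\varepsilon)^{1/n}<\eta\m/2$, the first part of the lemma guarantees that any $x\in K_{1-\eta}$ has $B_{\eta\m/2}(x)$ disjoint from $\partial F$; were this ball a.e.\ in $F^c$, the lower bound on $|K\setminus F|$ would contradict $|F\Delta K|<\varepsilon$, so $x\in F$. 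The symmetric argument yields $F\subset K_{1+\eta}$. The main subtlety I anticipate lies in case (b) of the first step: cleanly justifying the $F$ versus $F^c$ dichotomy for a ball avoiding $\partial^*F$ via the convention $\partial F=\overline{\partial^*F}$, the density characterization for sets of finite perimeter, and connectedness of the ball. Once this dichotomy is in place, everything else is routine tracking of constants depending on $n$, $\m/\M$, and $r_0$.
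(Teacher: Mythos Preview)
Your proof is correct and follows the same core idea as the paper's: locate a point (nearly) realizing the Hausdorff distance and use the density estimates there to produce a lower bound on $|F\Delta K|$. In fact your argument is more thorough than the paper's. The paper only treats your case~(a)---it asserts that ``there is some $x\in\partial F$ such that $B_d(x)$ is contained entirely in $K$ or in $K^c$'' and applies \eqref{UDES} at that $x$---without addressing the other half of the Hausdorff distance (your case~(b), a point $y\in\partial K$ far from $\partial F$), and without deriving the ``In particular'' conclusion explicitly. Your handling of case~(b) via the constancy of $\chi_F$ on a ball disjoint from $\partial^*F$, together with the elementary volume lower bounds $|B_r(y)\cap K|\geq c_1 r^n$ and $|B_r(y)\setminus K|\geq c_1 r^n$ at $y\in\partial K$, fills this gap cleanly. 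Your reduction to $h\leq r_0$ is also a technicality the paper omits (implicitly relying on the ambient containment $F\subset B_{R_0}$ to bound $h$ a priori). So the approaches coincide in spirit, but yours is the more careful execution; the anticipated subtlety in case~(b) is handled correctly by the constancy-of-$\chi_F$ argument you outline.
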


\begin{proof} 
Let $d = {\rm{hd}}(\pa F , \pa K).$ Then there is some $x \in \pa F$ such that either $B_d(x) $ is contained entirely in the complement of $K$ or $B_d(x)$ is entirely contained in $K$. If the first holds, then the lower density estimate in \eqref{UDES} implies that
$$ |F\Delta K| \geq |F\cap B_d(x) | \geq \frac{c_0\m^n}{\M^n} d^n,$$
while if the second holds, then the upper density estimate in  \eqref{UDES} implies that 
$$|F \Delta K | \geq |B_d(x) \setminus F| \geq \frac{c_0\m^n}{\M^n} d^n.$$ 
\end{proof}

We will make use of the following form of the Wulff inequality without a volume constraint.
\begin{lemma}\label{isop} Let $R_0>$ diam$(K)$ and $\Lambda >n$. Up to translation, the Wulff shape $K$ is the unique minimizer of the functional
$$\PP(F) + \Lambda\big| |F| - |K|\big| $$
among all sets $F\subset B_{R_0}$. 
\end{lemma}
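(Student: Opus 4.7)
The plan is to reduce the claim to the equality case of the classical Wulff inequality by analyzing the one-variable function obtained after applying the Wulff inequality to $\PP(F)$.

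First, since $R_0>\mathrm{diam}(K)$, some translate of $K$ lies in $B_{R_0}$, so competitors of the form $K+y\subset B_{R_0}$ exist and the infimum is at most $\PP(K)=n|K|$. It therefore suffices to show that
\begin{equation*}
\PP(F)+\Lambda\big||F|-|K|\big|\geq n|K|
\end{equation*}
for every $F\subset B_{R_0}$, with equality only if $F$ is a translate of $K$ (up to a null set).

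The key step is to apply the Wulff inequality $\PP(F)\geq n|K|^{1/n}|F|^{1/n'}$ and then study, with $t=|F|/|K|\geq 0$, the scalar function
\begin{equation*}
\phi(t)=n|K|\,t^{1/n'}+\Lambda|K|\,|t-1|.
\end{equation*}
I claim $\phi(t)\geq n|K|=\phi(1)$ with equality iff $t=1$, provided $\Lambda>n$. For $t\geq 1$ this is immediate, since both $n|K|t^{1/n'}\geq n|K|$ and $\Lambda|K|(t-1)\geq 0$, with the second strict unless $t=1$. For $t\in[0,1]$, use the elementary bound $t^{1/n'}\geq t$ (because $1/n'<1$) to write
\begin{equation*}
\phi(t)-n|K|=n|K|\bigl(t^{1/n'}-1\bigr)+\Lambda|K|(1-t)\geq -n|K|(1-t)+\Lambda|K|(1-t)=(\Lambda-n)|K|(1-t),
\end{equation*}
which is strictly positive when $t<1$ and $\Lambda>n$. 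Combining this with the Wulff inequality gives $\PP(F)+\Lambda||F|-|K||\geq\phi(t)\geq n|K|$.

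For the uniqueness statement, equality in the chain above forces $t=1$, i.e.\ $|F|=|K|$, and also equality in the Wulff inequality. The rigidity statement for the Wulff inequality (cited after its statement in the introduction) then yields that $F$ is a translate of $K$ up to a set of measure zero. The hypothesis $R_0>\mathrm{diam}(K)$ ensures such a translate is an admissible competitor, so the minimum is indeed attained. There is no real obstacle here; the only point to watch is making sure the penalty coefficient $\Lambda>n$ is exactly what is needed to dominate the linear term coming from the concavity deficit of $t\mapsto t^{1/n'}$ near $t=1$, which is precisely the hypothesis.
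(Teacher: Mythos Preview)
Your proof is correct and follows essentially the same approach as the paper's: both apply the Wulff inequality, use the elementary bound $t^{1/n'}\geq t$ for $t\in[0,1]$ (equivalently $|K|^{1/n}|E|^{1/n'}\geq |E|$ when $|E|\leq |K|$) to handle the penalty term, and conclude via the equality case of the Wulff inequality. The only cosmetic difference is that the paper first invokes lower semicontinuity to obtain a minimizer $E$ and then argues about $E$, whereas you prove the lower bound $\PP(F)+\Lambda\big||F|-|K|\big|\geq n|K|$ directly for all competitors; the substantive content is the same.
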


\begin{proof}
Let $E$ be a minimizer of $\PP(F) + \Lambda\big| |F| - |K|\big| $ among all sets of finite perimeter $F\subset B_{R_0}$;  this functional is lower semicontinuous so such a set exists. Comparing with $K$, we find that
\begin{equation}\label{compare no vc}
 \PP (E) + \Lambda \big| |E| - |K|\big| \leq \PP(K) = n|K|.
 \end{equation}
The Wulff inequality implies that $|E| \leq |K|$, and so $\PP(E) \geq n|E|^{1/n'} |K|^{1/n} \geq n|E|$.
Thus \eqref{compare no vc} implies that $\Lambda \left( |K| - |E|\right) \leq n \left(|K| - |E|\right).$ Since $\Lambda >n$,
 it follows that $|E| = |K|$. It follows that $E$ must be a translation of $K$, the unique (up to translation) equality case in the Wulff inequality.
 \end{proof}

We are now ready to prove Theorem~\ref{th1}.  
\begin{proof}[Proof of Theorem~\ref{th1}]
By \eqref{FiMPStatement}, we need only to show that there exists a constant $C= C(n)$ such that 
\begin{equation}\label{new1}\BPP(E)^{4n/(n+1)} \leq C\delta_{\PP}(E),
\end{equation}
for any 
set of finite perimeter $E$ with $0<|E|<\infty.$
By Lemma~\ref{compact}, it suffices to consider sets contained in $B_{R_0}$.
Let us introduce the set 
$$\mathcal{F}_{\NN} = \Big\{f \  : \ \frac{\M}{\m} \leq \mathcal{N} \Big\}$$ 
for $\NN \geq 1$, recalling $\M$ and $\m$ defined in \eqref{mM}.
In Steps $1$-$4$, we prove
 that, for every $\NN \geq 1$, there exists  a constant $C= C(n,\NN)$ such that  \eqref{new1} holds for any surface energy $\PP$ corresponding to a surface tension $f \in \mathcal{F}_{\NN}$. 
In Step $5$, we remove the dependence of the constant on $\NN$. \\

\noindent \textit{Step 1:} \textit{Set-up.} \\
Suppose for the sake of contradiction that (\ref{new1}) is false for some $\NN$.
We may then find a sequence of  sets $\{E_j\}$ with $E_j \subset B_{R_0}$  and a sequence of surface energies $\{\PP_{j}\}$, each $\PP_j$ with corresponding surface tension $f^j\in \mathcal{F}_{\NN}$, Wulff shape $K_j$, and support function $f_*^j$, such that the following holds:
\begin{align}  
|E_j | &= |K_j|=1, \nonumber \\
\PP_j(E_j) &- \PP_j(K_j)\to 0, \nonumber\\ 
\label{contra}\PP_j(E_j)& < \PP_j(K_j) +c_1 \beta_{\PP_j}(E_j)^{4n/(n+1)},
 \end{align}
 where $c_1 = c_1(\NN,n)$ is a constant to be chosen later in the proof.

 Each $f^j $ is in $ \mathcal{F}_{\NN}$ and is normalized to make $|K_j| =1$ implying that $\{f^j\}$ is locally uniformly bounded above, and hence, by convexity,  locally uniformly Lipschitz. By the Arzela-Ascoli theorem, up to a subsequence, $f^j \to f^{\infty}$ locally uniformly. 
 The uniform convergence ensures that this limit function $f^{\infty}$ is a surface tension in $\mathcal{F}_{\NN}$. 
 We denote the corresponding surface energy by $\PP_{\infty}$, Wulff shape by $K_{\infty}$ and support function by $f_*^{\infty}$. Note that $|K_{\infty}| =1.$ 

There exists $c(\NN)$ such that  
$\PP_j(E) \geq c(\NN)P(E)$ for any set of finite perimeter $E$, again thanks to $f^j \in\mathcal{F}_{\NN}$ and $|K_j|=1$.
Then, since $\PP_j(E_j) \to n$ (as $\PP_j(K_j)=n$), the perimeters are uniformly bounded. Furthermore, $E_j \subset B_{R_0}$, so up to a subsequence, $E_j\to E_{\infty}$ in $L^1$ with  $|E_{\infty}| =1$.

Proposition~\ref{convergence}(1) implies that $\PP_{\infty}(E_{\infty}) \leq 
{\lim} \ \PP_j(E_j)= n$, so by the Wulff inequality, $E_{\infty} = K_{\infty}$ up to translation.
Furthermore, Proposition~\ref{convergence}(2) then  ensures that 
${\lim}\ \g_{\PP_j}(E_j) = \g_{\PP_{\infty}}(K_{\infty}) =\frac{n}{n-1}, $
and therefore, by (\ref{betaform}),
$$\underset{j \to \infty}{\lim} \ \beta_{\PP_j}(E_j)^2 =\underset{j \to \infty}{\lim} \ \frac{1}{n} \left(\PP_j(E_j )  -(n-1) \g_{\PP_j}(E_j ) \right)= 0.$$
\\
\noindent\textit{Step 2:} \textit{Replace each $E_j$ with a minimizer $F_j$. }\\
As in \cite{fuscojulin11}, the idea is to replace each $E_j$ with a set $F_j$ 
for which we can say more about the regularity.  We let $\e_j = \beta_{\PP_j}(E_j)$
and let $F_j$ be a minimizer to the problem 
$$\text{min} \left\{Q_j(F)= \PP_j(F)
 + \frac{m_{\PP_j}}{8M_{\PP_j}} |\beta_{\PP_j}(F)^2 -\e_j^2|
 + \Lambda \big| |F| - 1 \big|  \ :\ F \subset B_{R_0} \right\}$$
for a fixed $\Lambda > 4n$. 
Lemma~\ref{LSC} ensures that such a minimizer exists. As before,  $\PP_j(F_j) \geq c(\mathcal{N})P(F_j)$. Pairing this with \eqref{Perimeter Bounds} provides a uniform bound on $P(F_j)$, so by compactness,
 $F_j \to F_{\infty} $ in $L^1$ up to a subsequence for some $F_{\infty} \subset B_{R_0}$. 
 
 For each $j$, we use the fact that $F_j$ minimizes $Q_j$, choosing $E_j$ as a comparison set. This, combined with (\ref{contra}) and Lemma~\ref{isop}, yields
\begin{align}
\PP_j(F_j)  &
 + \frac{1}{8\NN} |\beta_{\PP_j}(F_j)^2 - \e_j^2| 
 + \Lambda \big| |F_j| - 1 \big|
 \leq Q_j(F_j) \leq \PP_j(E_j)\nonumber \\
 \label{fine}
&\leq \PP_j(K_j) + c_1 \e_j^{4n/(n+1)} \leq
\PP_j(F_j) + \Lambda \big| |F_j| - 1 \big| + c_1 \e_j^{4n/(n+1)}.
\end{align}
It follows that 
$\frac{1}{8\NN} \big|\beta_{\PP_j}(F_j)^2 -\e_j^2 \big| \leq c_1 \e_j^{4n/(n+1)}$, immediately
 implying that $\beta_{\PP_j}(F_j) \to 0$. 
Moreover, rearranging and using the fact that $\e_j \to 0$ 
and $\frac{4n}{n+1}>2$, we have
$$\frac{\e_j^2}{2^{(n+1)/2n} }\leq \e_j^2 - 8\NN c_1\e_j^{4n/(n+1)} \leq \beta_{\PP_j}(F_j)^2,$$
 where the exponent $(n+1)/2n$ is chosen so that, taking the power $2n/(n+1),$ we obtain
\begin{equation}\label{B}
\e_j^{4n/(n+1)} \leq 2\beta_{\PP_j}(F_j)^{4n/(n+1)}.
\end{equation}
In the last inequality in \eqref{fine}, if we replace $F_j$ with arbitrary set of finite perimeter $E\subset B_{R_0}$, then we obtain
$$\PP_j(F_j)+ \Lambda\big| |F_j| -1\big| \leq \PP_j(E) + \Lambda\big| |E| - 1\big| + c_1 \e_j^{4n/(n+1)},$$
again using Lemma~\ref{isop}. Taking the limit inferior as $j \to \infty$, this implies that $F_{\infty}$ is a minimizer of the problem 
$$ \min \left\{ \PP_{\infty}(F) + \Lambda||F|-1| \ :\ F \subset B_{R_0}\right\},$$
and so $F_{\infty } = K_{\infty}$ up to a translation by Lemma~\ref{isop}. With no loss of generality, we translate each $F_j$ such that $\inf\{ |(F_j+z) \Delta K_{\infty} | :z\in\Rn\}  = |F_j \Delta K_{\infty}|.$\\

\noindent\textit{Step 3:} \textit{ For $j$ sufficiently large,  $\frac{1}{2}K_j \subset F_j \subset 2K_j$ and $|F_j | = 1$.}\\ 
Lemma~\ref{UDE} implies that each $F_j$ satisfies uniform density estimates, and thus for $j$ sufficiently large, Lemma~\ref{bound} ensures that $\frac{1}{2} K_j \subset F_j\subset 2 K_j,$ as $|K_j \Delta F_j|  \leq |K_j \Delta K_{\infty}| + |K_{\infty} \Delta F_j|$
 and both terms on the right hand side go to zero.
 
Let $r_j>0$ be such that  $|r_jF_j| =1$. We may take $r_jF_j$ as a comparison set for $F_j$;  $r_j\leq2$ by Lemma~\ref{LSC}, so $r_jF_j \subset 4K_j\subset B_{R_0}$ as long as $R_0> 4\M > C\mathcal{N},$ the second inequality following from $|K_j|=1$. Since $\beta_{\PP_j}$ is invariant under scaling, $Q_j(F_j)\leq Q_j(r_jF_j)$ yields
\begin{align}\label{scale1} \PP_j(F_j)+ \Lambda|1- |F_j||\leq  r_j^{n-1} \PP_j(F_j) .\end{align}
This immediately implies that $r_j \geq 1$ for all $j$, in other words, $|F_j| \leq 1$.
Furthermore,  $r_j \to 1$ because $F_j \to K_{\infty}$ in $L^1$ and $|K_{\infty}|=1$. 
Suppose that, for some subsequence, $r_j>1$. Then, using $|F_j| = 1/r_j^{n}$,  \eqref{scale1} implies
\begin{equation}\label{etasmall}
\Lambda \leq \bigg(\frac{r_j^n(r_j^{n-1} - 1)}{r_j^n -1 } \bigg)\PP_j(F_j).
\end{equation}
For any $0<\eta<\frac{1}{n}$ and for $j$ sufficiently large, the right hand side is bounded by $ (1-\eta)\PP_j(F_j)$, as 
$\underset{r \to 1^+}{\lim}\ \frac{r^n(r^{n-1}-1)}{r^n -1} = \frac{n-1}{n}$.
Furthermore, $\PP_j(F_j) \leq n + \e_j^2$ since $Q_j(F_j) \leq Q_j(K_j)$, so \eqref{etasmall} implies that
$$\Lambda  \leq(1-\eta) \PP_j(F_j) \leq (1-\eta) \left(n + \e_j^2\right) \leq n
$$
for $j$ sufficiently large. Since $n<\Lambda$, we reach a contradiction, concluding that $|F_j| = 1$ for $j$ sufficiently large.
\\

\noindent \textit{Step 4:} \textit{Derive a contradiction to \eqref{contra}.}\\
We will show that $\beta_{\PP_j}(F_j)^{4n/(n+1)} \leq C \delta_{\PP_j}(F_j)$, which in turn will be used to contradict \eqref{contra}.
Adding and subtracting the term $\PP_{j}(K_j)/n = (n-1) \g_{\PP_j}(K_j)/n$ to \eqref{betaform}, we have
\begin{equation*}
\begin{split}
\beta_{\PP_j}(F_j)^2 &
\leq  \frac{ \PP_j(F_j)}{n} - \frac{n-1}{n} \int_{F_j} \frac{dx}{f^j_*(x)} =\delta_{\PP_j}(F_j) + \frac{n-1}{n}\bigg(\int_{K_j} \frac{dx}{f^j_*(x) } - \int_{F_j} \frac{dx}{f^j_*(x)}\bigg) \\
&=\delta_{\PP_j}(F_j) + \frac{n-1}{n}\bigg( \int_{F_j\setminus K_j} 1 - \frac{1}{f^j_*(x) }  \ dx + \int_{K_j\setminus F_j} \frac{1}{f^j_*(x) } - 1 \ dx 
\bigg).
\end{split}
  \end{equation*}
We now control the last term in terms of $\delta_{\PP_j}(F_j).$ Note the following: since
 $\frac{1}{2} K_j \subset F_j \subset 2K_j$,  the last term above is bounded by $C|F_j\Delta K_j| \leq \delta_{\PP_j}(F_j)^{1/2}.$ This could establish \eqref{new1} with the exponent $4$. However, with the following argument, we obtain the improved exponent $4n/(n+1)$.

As noted before, Lemma~\ref{UDE} implies that
each $F_j$ satisfies uniform density estimates (\ref{UDES}) 
with $m_{\PP_j}/M_{\PP_j}\geq 1/{\NN}$. 
The lower density estimate provides information about how far $f^j_*(x)$ can deviate from $1$ for $x \in F_j\setminus K_j$, thus bounding the first integrand. Indeed, arguing as in the proof of Lemma~\ref{bound}, for any $x \in F_j \setminus K_j$, let $d = f^j_*(x) -1$. 
The intersection $K_j\cap B_{d} (x) $ is empty by the definition of $f_*^j$,
 and thus $F_j\cap B_{d}(x) \subset F_j \setminus K_j$.
 Therefore, for $x \in \partial^*F_j \setminus K_j$, 
  $$\frac{c_0}{ {\NN}^n} d^n \leq |B_{d} (x) \cap F_j |
  \leq |F_j \Delta K_j| \leq C\delta_{\PP_j}(F_j)^{1/2}$$
 by the lower density estimate in \eqref{UDES} and the quantitative Wulff inequality as in \eqref{FiMPStatement}. In fact, this bound holds for any 
$x \in F_j \setminus  K_j$; since $F_j$ is bounded, for any $x \in F_j \setminus K_j$, there is some $y \in \partial^*F_j \setminus K_j$ such that $f_*^j(x) \leq f^*_j (y).$ Therefore, $f^j_*(x) - 1 \leq C\delta_{\PP_j}(F_j)^{1/2n}$ for all $x \in F_j \setminus K_j$, and so
\begin{equation} \label{aa}
\begin{split}\int_{F_j \setminus K_j } 1- \frac{1}{f^j_*(x) }dx &\leq \int_{F_j \setminus K_j} f_*(x) - 1 \ dx 
 \leq  \int_{F_j \setminus K_j} C\delta_{\PP_j}(F_j)^{1/2n} \ dx\\
 & = C |F_j\Delta K_j | \delta_{\PP_j}(F_j)^{1/2n} \leq  C \delta_{\PP_j}(F_j)^{1/2+1/2n},
\end{split}
\end{equation}
where $C = C({\NN},n)$ and the final inequality uses \eqref{FiMPStatement} once more.
The analogous argument using the upper density estimate in \eqref{UDES}, paired with the fact that eventually $\frac{1}{2}K_j \subset F_j$, 
provides an upper bound for the size of
 $1-f^j_*(x) $ for $x \in K_j\setminus F_j$, giving
\begin{equation}\label{bb}
\begin{split}
\int_{K_j\setminus F_j} \frac{1}{f^j_*(x)} - 1 \ dx \leq 2 \int_{K_j \setminus F_j }1 -f^j_*(x) \ dx
 \leq  C\delta_{\PP_j}(F_j)^{1/2 + 1/2n}.
\end{split}
\end{equation}
Combining (\ref{aa}) and (\ref{bb}), we conclude that 
\begin{equation}\label{feb}\beta_{\PP_j}(F_j)^{4n/(n+1)} \leq C_1\delta_{\PP_j}(F_j)\end{equation}
where $C_1= C_1 (\NN, n)$.

We now use the minimality of $F_j$, 
comparing against $E_j$, along with (\ref{contra}) and (\ref{B}) to obtain
\begin{align*}
\PP_j(F_j) & \leq \PP_j(E_j) 
\leq \PP_j(K_j) + c_1 \e_j^{4n/(n+1)} \leq  \PP_j(K_j) +2c_1 \beta_{\PP_j}(F_j)^{4n/(n+1)}.
 \end{align*}
By \eqref{B}, $\beta_{\PP_j}(F_j)$ is positive, so by choosing $c_1< n/2C_1$, this contradicts \eqref{feb}, thus proving \eqref{new1} for the class $\mathcal{F}_\NN$ with the constant $C$ depending on $n$ and $\NN$.\\
 
\noindent\textit{Step 5:} \textit{Remove the dependence on ${\NN}$ of the constant in (\ref{new1}).}\\
We argue as in \cite{FiMP10}. We will use the following notation: $\PP_{K}$ is the surface energy with Wulff shape $K$, surface tension $f^{K}$, and support function $f_*^{K}$. We use $\delta_K, \beta_K$, and $\g_K$ to denote $\delta_{\PP_K}, \beta_{\PP_K}$, and $\g_{\PP_K}$ respectively.

By John's Lemma (\cite[Theorem III]{John48}), for any convex set $K \subset \Rn$, there exists an affine transformation $L$ such that $\det L >0$ and  $B_1 \subset L(K) \subset B_n.$ This implies that $M_{L(K)} / m_{L(K)} \leq n$ and so $f^{L(K)} \in \mathcal{F}_n$.  Our goal is therefore to show that $\beta_{\PP}(E)$ and $\delta_{\PP}(E)$ are invariant under affine transformations. Indeed, once we verify that $\beta_K(E) = \beta_{L(K)}(L(E))$ and $ \delta_{K}(E) = \delta_{L(K)}(L(E)),$ we have 
$$\beta_K(E)^{4n/(n+1)} = \beta_{L(K)}(L(E))^{4n/(n+1)} \leq C(n) \delta_{L(K)}(L(E)) = 
C(n)  \delta_{K}(E) ,$$
and \eqref{new1} is proven with a constant depending only on $n$.

Suppose $E$ is a smooth, open, bounded set. Then 
$$
\PP_K(E) =
\underset{\e \to 0 }{\lim}\ \frac{ |E + \e K | - |E| }{\e};
$$
this is shown by applying the anisotropic coarea formula to the function 
$$
d^K(x, \pa E) :=\begin{cases} 
\inf \{ f_*(x-y) : y \in \pa E\} & {\rm if }\ x \in E^c\\
-\inf \{ f_*(x-y) : y \in \pa E\} & {\rm if }\ x \in E
\end{cases} 
$$
 and noting that 
$(E+\e K) \setminus E = \{ x: 0\leq  d^K(x, \pa E) <\e\}.$

Since $L$ is affine, $ |L(E + \e K)| - |L(E)| = \det L \left(|E + \e K | - |E|\right)$, and so 
$$\PP_K(E) = \underset{\e \to 0}{\lim} \ \frac{ |L(E + \e K)| - |L(E)|}{\e \det L}
=\frac{\PP_{L(K)}(L(E))}{\det L}.
$$
Since $|E| =|L(E)|/\det L$, we have
\begin{align*}
\delta_K(E)   = 
\frac{ \PP_K(E)}{n|K|^{1/n}|E|^{1/n'} } -1 
& = \frac{ \PP_{L(K)}(L(E))}{n|L(K)|^{1/n} |L(E)|^{1/n'} } -1 = \delta_{L(K)}(L(E)), 
\end{align*}
and thus $\delta_K(E)$ is invariant. Similarly,
\begin{align*}
f_*^K(L^{-1}z - y )& = 
\inf \Big\{ \lambda \ : \ \frac{L^{-1}(z) -y }{\lambda} \in K \Big\} 
= \inf \Big\{ \lambda \ : \ \frac{z - L(y)}{\lambda} \in L(K) \Big\} = f_*^{L(K)}\left(z-L(y)\right),
\end{align*} 
and thus
$$ \int_E \frac{dx}{f_*^K(x-y)}= \int_{L(E) } \frac{dz}{f_*^K\left(L^{-1}(z) - y\right) \det L}  = \int_{L(E)} \frac{dz}{f_*^{L(K)}\left(z-L(y)\right)\det L}.$$
Taking the supremum over $y\in \Rn$ of both sides, we have 
$$ \g_{K} (E)= \frac{\g_{L(K)}(L(E))}{\det L} .$$
From $(\ref{betaform})$,
$$
\beta_K(E) =\bigg(\frac{\PP_K(E) -(n-1) \g_K(E)}{n|K|^{1/n} |E|^{1/n'} } \bigg)^{1/2} .$$
We have just shown that, for the denominator, 
$$\bigg(\frac{1}{n|K|^{1/n} |E|^{1/n'} } \bigg)^{1/2} =  \bigg(\frac{\det L }{n|L(K)|^{1/n} |L(E)|^{1/n' }}\bigg)^{1/2}
,$$
and for the numerator, 
$$\bigg(\PP_K(E) -(n-1) \g_K(E) \bigg)^{1/2}  =\bigg( \frac{ \PP_{L(K) } (L(E)) - (n-1)\g_{L(K)}(L(E))}{\det L } \bigg)^{1/2}.$$
The term $\det L$ cancels, yielding
$$\beta_K(E) = \bigg(\frac{\PP_{L(K) } (L(E)) - (n-1)\g_{L(K)}}{n|L(K)|^{1/n} |L(E)|^{1/n'} }\bigg)^{1/2}
=\beta_{L(K)}(L(E)),
$$
showing that $\beta_K(E)$ too is invariant.
\end{proof}


\section{The case of $\lambda$-elliptic Surface Tension}\label{smoothsection}
In this section, we prove Theorem~\ref{Smooth}. 
This proof closely follows the proof of $(\ref{FuscoJulin})$ in \cite{fuscojulin11}. Using a selection principle argument and the regularity theory for $(\Lambda, r_0)$-minimizers of $\PP$, we reduce to the case of sets that are small $C^1$ perturbations of the Wulff shape $K$. In \cite{fuscojulin11}, this argument brings Fusco and Julin to the case of nearly spherical sets, at which point they call upon $(\ref{fugst})$, where Fuglede proved precisely this case in \cite{fuglede1989}.

We therefore prove in Proposition~\ref{Fug-type} an analogue of $(\ref{fugst})$ in the case of the anisotropic surface energy $\PP$ when $f$ is a $\lambda$-elliptic surface tension. The following lemma shows that if $E$ is a small $C^1$ perturbation of the Wulff shape $K$ with $|E| =|K|$, then the Taylor expansion of the surface energy vanishes at first order and takes the form \eqref{theexpansion}. We then use the quantitative Wulff inequality as in $(\ref{FiMPStatement})$ and the barycenter constraint along with \eqref{theexpansion} to prove Proposition~\ref{Fug-type}.

\begin{lemma} \label{expansionlem}
Suppose that $\PP$ is a surface energy corresponding to a $\lambda$-elliptic surface tension $f$, and $E$ is a set such that 
$|E| = |K|$ and
 $$\partial E = \{ x + u(x) \nu_K(x) \, :\, x \in \partial K \}$$ 
 where $u: \pa K \to \mathbb{R}$ and $\| u \|_{C^1(\partial K)} = \e.$
 There  exists $\e_0>0$ depending on $\lambda$ and $n$ such that if $\e<\e_0$, 
  \begin{align}\label{theexpansion}
 \PP(E) = \PP(K) + &\frac{1}{2}\int_{\partial K }(\nabla u)^{\rm{T}} \nabla^2 f(\nu_K)\nabla u \,d\Hn
 - \frac{1}{2} \int_{\partial K }H_K u^2 \,d\Hn
+\e \ O (\|u\|_{H^1(\partial K)}^2),
\end{align}
where $H_K$ is the mean curvature of $K$ and all derivatives are restricted to the tangential directions.
\end{lemma}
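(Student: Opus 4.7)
The plan is to parametrize $\partial E$ by the normal graph map $\Phi(x) = x + u(x)\nu_K(x)$ over $\partial K$, change variables to write
\[
\PP(E) = \int_{\partial K} f\bigl(\nu_E(\Phi(x))\bigr)\, J_\Phi(x)\, d\Hn(x),
\]
and Taylor expand both factors to second order in $u$ and $\nabla u$. Writing $A = d\nu_K$ for the shape operator of $\partial K$ (so $\tr A = H_K$), for any tangent vector $\tau$ one has $d\Phi(\tau) = (I+uA)\tau + (\nabla u \cdot \tau)\nu_K$, from which I would derive the expansions
\[
J_\Phi = 1 + u H_K + \tfrac12 u^2(H_K^2 - |A|^2) + \tfrac12|\nabla u|^2 + \e\cdot O(u^2 + |\nabla u|^2),
\]
\[
\nu_E\circ\Phi = \nu_K - \nabla u + uA\nabla u - \tfrac12|\nabla u|^2\nu_K + \e\cdot O(u^2 + |\nabla u|^2),
\]
by imposing orthogonality of $\nu_E$ to $d\Phi(\tau)$ and normalizing. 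Next I would Taylor expand $f$ about $\nu_K$ using $f\in C^2(\Rn\setminus\{0\})$ and the homogeneity identities $\nabla f(\nu)\cdot \nu = f(\nu)$ and $\nabla^2 f(\nu)\nu = 0$. The key simplification comes from \eqref{gradf}: on $\partial K$, $\nabla f(\nu_K(x)) = x$, which turns the first-order Taylor term $\nabla f(\nu_K)\cdot(\nu_E - \nu_K)$ into $-x\cdot \nabla u$, while the second-order Taylor term contributes $\tfrac12(\nabla u)^T\nabla^2 f(\nu_K)\nabla u$ modulo cubic remainders (cross terms with $uA\nabla u$ are cubic, and the $\nu_K$-components are annihilated by $\nabla^2 f(\nu_K)\nu_K = 0$).

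Multiplying $f(\nu_E)\cdot J_\Phi$, and noting the cancellation between the $-\tfrac12|\nabla u|^2 f(\nu_K)$ piece from the first-order Taylor expansion of $f$ and the $\tfrac12 f(\nu_K)|\nabla u|^2$ piece from $J_\Phi$, the expansion reads
\[
\PP(E) - \PP(K) = L(u) + \tfrac12\int_{\partial K}(\nabla u)^T \nabla^2 f(\nu_K)\nabla u\, d\Hn + \mathcal Q(u) + \e\, O(\|u\|_{H^1}^2),
\]
where $L(u) = \int_{\partial K}\bigl(f(\nu_K) H_K u - x\cdot \nabla u\bigr)\, d\Hn$ collects the linear-in-$u$ terms and $\mathcal Q(u)$ gathers the remaining $u^2$ and $u\nabla u$ terms from the Jacobian and the cross products. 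The tangential divergence theorem on $\partial K$, combined with $\DIV_{\partial K}\nu_K = H_K$ and the observation that the ambient identity field has surface divergence $(n-1) - f(\nu_K)H_K$ in its tangential component, reduces $L(u)$ to the clean expression $(n-1)\int u\, d\Hn$, reflecting that the anisotropic mean curvature of $K$ is constantly $n-1$. The $u\nabla u$ cross terms in $\mathcal Q(u)$ are handled similarly by integration by parts, turning them into further $u^2$ contributions.

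Finally, I would impose the volume constraint. The standard expansion $|E| - |K| = \int u\, d\Hn + \tfrac12\int H_K u^2\, d\Hn + O(\|u\|_{L^3}^3)$ together with $|E|=|K|$ yields $\int u\, d\Hn = -\tfrac12\int H_K u^2\, d\Hn + \e\, O(\|u\|_{L^2}^2)$. Substituting into $(n-1)\int u$ and adding the remaining $u^2$ contributions from $\mathcal Q(u)$ produces, after cancellation, precisely $-\tfrac12\int H_K u^2\, d\Hn$, while the quadratic form in $\nabla u$ survives unchanged. All leftover terms are pointwise cubic in $(u,\nabla u)$ and hence absorbed into $\e\, O(\|u\|_{H^1}^2)$ under $\|u\|_{C^1(\partial K)}\leq\e$. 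The main obstacle will be this final cancellation: the $u^2$-pieces from $\mathcal Q(u)$ carry geometric coefficients such as $f(\nu_K)(H_K^2 - |A|^2)$ and surface divergences from the $u\nabla u$ integrations by parts, and one must verify that together with the $-\tfrac{n-1}{2} H_K u^2$ contribution from the constraint they collapse to the clean coefficient $-\tfrac12 H_K$ for every Wulff shape. As a consistency check, the sphere case $f = |\cdot|$ gives $A = I|_{\mathrm{tang}}$, $H_K = n-1$, and $x - f(\nu_K)\nu_K \equiv 0$ (so the cross terms vanish outright), and the expansion reduces to Fuglede's classical formula $\tfrac12\int_{S^{n-1}}|\nabla u|^2 - \tfrac{n-1}{2}\int u^2$.
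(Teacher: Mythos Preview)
Your approach is sound and will work, but the paper's organization bypasses the cancellation you flag as the main obstacle. The key simplification is that by $1$-homogeneity of $f$ one has $f(\nu_E(\Phi))\,J_\Phi = f(J_\Phi\,\nu_E) = f(w)$, where $w = g_1\wedge\cdots\wedge g_{n-1}$ is the wedge of the pushforward tangent vectors identified with a vector in $\Rn$. Thus there is a \emph{single} Taylor expansion of $f$ at $\nu_K$: the second-order term is already $\tfrac12\int(\nabla u)^T\nabla^2 f(\nu_K)\nabla u$ (since $\nabla^2 f(\nu_K)\nu_K=0$ kills the normal components of $w-\nu_K$), and the entire first-order term is $\int_{\partial K} x\cdot(w-\nu_K)\,d\Hn$. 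For this term the paper expands $|E|$ in two ways: the divergence theorem $n|E|=\int_{\partial E}y\cdot\nu_E$ pulls back to $\int_{\partial K}(x+u\nu_K)\cdot w$ and, under $|E|=|K|$, gives $\int x\cdot(w-\nu_K)=-\int(u+H_Ku^2)+\e\,O(\|u\|_{H^1}^2)$; separately, the Jacobian expansion of the normal flow $\psi_t(x)=x+tu\nu_K$ gives $\int u=-\tfrac12\int H_Ku^2+\e\,O(\|u\|_{H^1}^2)$. Combining the two yields $\int x\cdot(w-\nu_K)=-\tfrac12\int H_Ku^2$ directly, with no geometric identity to verify. Your decomposition $L(u)+\mathcal{Q}(u)$ is in fact equal to $\int x\cdot(w-\nu_K)$, so the cancellation you need, namely $\mathcal{Q}(u)=\tfrac{n-2}{2}\int H_Ku^2$, is true and can be read off from the paper's two volume formulas before the constraint is imposed; but establishing it pointwise as $f(\nu_K)(H_K^2-|A|^2)-\DIV_{\partial K}\bigl((A-H_KI)x^T\bigr)=(n-2)H_K$ on $\partial K$ is a genuine extra computation that the homogeneity trick renders unnecessary.
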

\begin{remark}
\rm{The second fundamental form $\rm{II}_K$ of $K$ satisfies
$$ \nabla^2 f(\nu_K(x)) \rm{II}_K(x) = \rm{Id}_{T_x\pa K}  \ \ \text{ for all } x \in \pa K.
$$
Therefore, $H_K= \text{tr}({\rm{II}_K})$ is equal to $\text{tr}(\nabla^2 f\, \rm{II}_K^2)$ and thus \eqref{theexpansion} agrees with, for example, \cite[Corollary $4.2$]{clarenz2004surfaces}.}
\end{remark}
\begin{proof}[Proof of Lemma~\ref{expansionlem}] 
For a point $x\in \partial K$, let $\{\tau_1, \hdots, \tau_{n-1}\}$ be normalized eigenvectors of $\nabla \nu_K$, where each $\tau_i $ corresponds to the eigenvalue $\lambda_i$.
  This set is an orthonormal basis for $T_xK$, and thus $\{\tau_1,\hdots, \tau_{n-1}, \nu_K\}$ is an orthonormal basis for $\Rn.$
A basis for $T_{x+u\nu_K}E$ is given by the set $\{g_1 , \hdots, g_{n-1}\}$, where,  adopting the notation $u_i = \partial_{\tau_i}u,$ 
$$g_i = \partial_{\tau_i}[x + u\nu_K] = (1+\lambda_i u)\tau_i + u_i \nu_K.$$

We make the standard identification of an $(n-1)$-vector with a vector in $\Rn$ in the following way. The norm of an $(n-1)$-vector $v_1\wedge\cdots\wedge v_{n-1}$ is given by $|v_1 \wedge \hdots \wedge v_{n-1}| = | \det (v_1, \hdots , v_{n-1})|.$ If $|v_1 \wedge \hdots \wedge v_{n-1}| \neq 0$, then the vectors $v_1,\hdots , v_{n-1} $ are linearly independent and we may consider the $n-1$ dimensional hyperplane $\Pi$ spanned by $v_1, \cdots , v_{n-1}.$ Letting $\nu$ be a normal vector to  $\Pi$, we make the identification 
$$v_1 \wedge \hdots \wedge v_{n-1} = \pm |v_1 \wedge \hdots \wedge v_{n-1}|\, \nu,$$
where the sign is chosen such that 
$\det (v_1, \hdots , v_{n-1} , \pm\nu) >0.$
In particular, we make the identifications
$$\tau_1 \wedge \hdots \wedge \tau_{n-1} = \nu_K, \qquad\frac{ g_1 \wedge \hdots \wedge g_{n-1}}{|g_1 \wedge \hdots \wedge g_{n-1}|} = \nu_E
, \quad \text{and} \quad\tau_1 \wedge \hdots \wedge \nu_K \wedge \hdots \wedge \tau_{n-1}  = - \tau_i.$$
The sign is negative in the third identification because 
\begin{align*}
 \det (\tau_1 ,\hdots, \nu_K , \hdots, \tau_{n-1}, -\tau_{i})
= -\det (\tau_1 ,\hdots,-\tau_{i},\hdots, \tau_{n-1}, \nu_K ) =\det (\tau_1 ,\hdots,\tau_{i},\hdots, \tau_{n-1}, \nu_K ) = 1.
\end{align*}
We let $w:=g_1 \wedge \hdots \wedge g_{n-1}$, and so
\begin{align}
w &= [ (1+\lambda_1 u )\tau_i + u_1 \nu_K] \wedge \hdots \wedge [(1+\lambda_{n-1}u )\tau_{n-1} + u_{n-1} \nu_K] \nonumber \\ 
& = \prod_{i=1}^{n-1} (1+ \lambda_i u) \nu_K - \sum_{i=1}^{n-1}  u_i \prod_{i\neq j} (1+ \lambda_ju)\tau_i \nonumber \\
&\label{ggg}   = \Big[1+ H_K u + \sum_{i<j } \lambda_i \lambda_j u^2 \Big]\nu_K
     -\sum_{i=1}^{n-1}u_i \Big[ 1  + \sum_{j\neq i } \lambda_j u\Big] \tau_i +\e\, O( |u|^2 + |\nabla u|^2).
\end{align}

In order to show (\ref{theexpansion}), the volume constraint is used to show that the first order terms in the Taylor expansion of the surface tension vanish.
We achieve this by expanding the volume in two different ways. 

The divergence theorem implies that 
\begin{align*} 
n|E|  = \int_{\partial E} x \cdot \nu_E\, d\Hn& =
 \int_{\partial K} (x+ u\, \nu_K) \cdot \left(\frac{w}{|w|}\right)|w|\,d\Hn = \int_{\partial K} (x+ u\,\nu_K) \cdot w\,d\Hn. \end{align*}
Adding and subtracting $\nu_K = \tau_1 \wedge \hdots \wedge \tau_{n-1}$, and using \eqref{ggg} and the fact that $\nu_K \cdot \tau_i = 0$, we have \begin{align*}
n|E|  =   \int_{\partial K } x\cdot \nu_K\,  d\Hn
+\int_{\partial K} u   + x \cdot (w &- \nu_K )+H_K u^2\, d\Hn +\e \, O( \|u\|_{H^1(\partial K)}^2
).
\end{align*}
Since $ \int_{\partial K } x\cdot \nu_K  \,d\Hn = n|K|,$
 the volume constraint $|E| = |K|$ implies that 
\begin{equation}\label{vol1}
\begin{split} \int_{\partial K} x \cdot (w &- \nu_K )\, d\Hn= -\int_{\partial K} u +H_K u^2 \, d\Hn+\e \, O( \|u\|_{H^1(\partial K)}^2).
\end{split}
\end{equation}

Now we expand the volume in a different way. Because $f$ is a $\lambda$-elliptic surface tension, 
the Wulff shape $K$ is $C^2$ with mean curvature depending on $\lambda$ and $n$. 
Therefore, there exists $t_0=t_0(\lambda, n)>0$ such that the neighborhood 
$$D = \{ x+t\nu_K(x) :x\in \partial K , t\in (-t_0, t_0) \}$$
satisfies the following property: for each $y\in D,$ there is a unique projection $\pi:D \to \partial K$ such that
$\pi(y) =x$ if and only if $y = x + t\nu_K(x)$ for some $t\in (-t_0, t_0)$. In this way, we extend the normal vector field $\nu_K$ to a vector field $N_K$ defined on $D$ by letting $N_K:D\to\Rn$ be defined by $N_K(y)= \nu_K(\pi(y)).$ We also extend $u$ to be defined on $D$ by letting $u(y) = u(\pi(y))$ for all $y \in D$.
Therefore, if $\e_0<t_0$, $\partial E$ may be realized as the time $t=1$ image of $\partial K$ under the flow defined by
$$\frac{d}{dt} \psi_t(x) = uN_K(\psi_t(x)), \qquad \psi_0(x) = x.$$
Such a flow is given by 
$\psi_t(x) = x + tuN_K,$
and so 
$\nabla \psi_t(x) = {\rm{Id}} + tA$ where $A= \nabla (uN_K).$
A small adaptation of the proof of \cite[Lemma $17.4$]{maggi2012sets} gives
\begin{equation}\label{Jacobian}
J\psi_t = 1 + t \text{ tr}(A) +\frac{t^2}{2} (\text{tr}(A)^2 - \text{tr}(A^2)) + \e \, O(|u|^2 + |\nabla u|^2).
\end{equation}
Integrating by parts, it is easily verified that
\begin{align*}
\int_{K} \text{tr} (A)^2 - \text{tr} (A^2)\, dx
& = \int_{K } \DIV (uN_K\,\DIV(uN_K))\, dx - \int_{\partial K } \sum_{i,j=1}^n 
(uN_K)^{(i)}\partial_i(uN_K)^{(j)} \nu_K^{(j)}\,d\Hn 
\\
&  = \int_{K} \text{div} (uN_K\,\DIV(uN_K)) \,dx - \int_{\partial K }
u \nabla u \cdot \nu_K\, d\Hn .
\end{align*}
The second equality is clear by choosing the basis $\tau_1, \hdots, \tau_{n-1}, \tau_{n}$, where $\tau_n = \nu_K$.
Furthermore, the divergence theorem implies that
$$ \int_{K} \DIV(uN_K\, \DIV(uN_K))\, dx = \int_{\partial K } u \ \DIV(uN_K) \,  d\Hn = 
\int_{\partial K}u \nabla u \cdot \nu_K + H_K u^2 \,d\Hn,$$
so that
$$
\int_{K} \text{tr} (A)^2 - \text{tr} (A^2)\, dx=
\int_{\partial K} H_K u^2 \,d\Hn.
$$
With this and \eqref{Jacobian} in hand, we have the following expansion of the volume:
\begin{align*}
|\psi_t(K) | = \int_K J\psi_t \,dx
&= |K| + t \int_{\partial K } u \,d\Hn 
+ \frac{t^2}{2} \int_{\partial K} H_K  u^2\,d\Hn + t^3 \e \, O(\|u\|_{H^1(\partial K)}^2).
\end{align*}
Therefore, the volume constraint $|K| =|E|= |\psi_1(K)|$ implies that 
\begin{equation}\label{vol2} \int_{\partial K } u \, d\Hn=- \frac{1}{2}\int_{\partial K} H_K u^2\,  d\Hn+  \e \,O(\|u\|_{H^1(\partial K)}^2).
\end{equation} 
 Combining $(\ref{vol1})$ and $(\ref{vol2})$, we conclude that 
 \begin{equation}\label{volume}
  \int_{\partial K} x \cdot (w-\nu_K )\, d\Hn
  =-\frac{1}{2}\int_{\partial K } u^2H_K\, d\Hn + \e \, O(\|u\|_{H^1(\partial K)}^2).
 \end{equation} 

We now proceed with 
a Taylor expansion of the surface energy of $E$:
\begin{align*} 
\PP(E)  =& \int_{\partial^* E} f(\nu_E) \,d\Hn=
  \int_{\partial K} f\Big(\frac{w}{|w|}\Big) |w|\,d\Hn= \int_{\partial K} f(w)\,d\Hn \\
  = &\int_{\partial K } f(\nu_K)\, d\Hn
  + \int_{\partial K } \nabla f(\nu_K)\cdot (w - \nu_K )\,d\Hn
\\
&+ \frac{1}{2}\int_{\partial K } [w-\nu_K ]^{\text{T}}  \nabla^2 f(\nu_K)  [w - \nu_K ] \,d\Hn +\e \, O( \|u\|_{H^1(\partial K)}^2),
\end{align*}
so, recalling that $\nabla f(\nu_K(x)) = x$ by \eqref{gradf},
$$\PP(E) =\PP(K)
+ \int_{\partial K }x\cdot (w- \nu_K )\,d\Hn
+\frac{1}{2}\int_{\partial K } \sum_{i,j = 1}^{n-1} u_iu_j(\tau_i^{\text{T}} \nabla^2 f(\nu_K) \tau_j) \,d\Hn
+ \e \, O( \|u\|_{H^1(\partial K)}^2).
$$
Applying $(\ref{volume})$ yields $(\ref{theexpansion})$, completing the proof.
\end{proof}

We now prove Proposition~\ref{Fug-type}, using $(\ref{theexpansion})$ as a major tool.

\begin{proof}[Proof of Proposition~\ref{Fug-type}]
Suppose $E$ is a set as in the hypothesis of the proposition, 
i.e., $|E| = |K|,$ $\bary E = \bary K$, and
 $$\pa E = \{ x + u(x) \nu_K(x): x \in \pa K \},$$ 
where $u: \pa K \to \mathbb{R}$ is a function such that $u \in C^1(\pa K)$ and $\|u\|_{{C^1}(\pa K)}= \e \leq \e_1$ with $\e_1$ to be fixed during the proof. Up to multiplying $f$ by a constant, which changes $\lambda$ by the same factor and leaves $\m/\M$ unchanged, we may assume that $|K| =1$. Let 
$$B(u) = \frac{1}{2}\int_{\pa K }(\nabla u)^{\text{T}} \nabla^2 f(\nu_K)\nabla u\, d\Hn
 -\frac{1}{2} \int_{\pa K }H_K u^2 \,d\Hn,$$
so that, by \eqref{theexpansion}, 
\begin{equation}\label{deficit bilinear form}
\delta_{\PP}(E) = \frac{1}{n} B(u) +\e \,O (\|u\|_{H^1(\pa K)}^2)
\end{equation}
as long as $\e_1 \leq \e_0$ for $\e_0$ from Lemma~\ref{expansionlem}.

\vspace{3mm}
\noindent {\it{Step 1:}} {\it There exists $C = C(n,\lambda, \m/\M)$ such that, for $\e_1$ small enough depending on $ \m/\M$ and $\lambda$,}
\begin{equation}\label{L1squared} \Big(  \int_{\pa K} |u|\, d \Hn\Big)^2 \leq C\delta_{\PP}(E).
\end{equation}

\vspace{3mm}
\noindent {\it{Step 1(a):}} {\it There exists $C= C(n,\m/\M)$ such that, for $\e_1= \e_1(\m/\M)$ small enough,}
\begin{equation}\label{okok}|E \Delta K| \leq C\delta_{\PP}(E)^{1/2}.\end{equation}
The quantitative Wulff inequality in the form \eqref{FiMPStatement} states that
$ |E \Delta(K+ x_0)|\leq   C(n)\delta_{\PP}(E)^{1/2}$
for some $x_0 \in \Rn$, 
so by the triangle inequality,
 \begin{equation}\label{bound1}
 |E \Delta K|  \leq C(n)  \delta_{\PP}(E)^{1/2}  + |(K+x_0) \Delta K| .
 \end{equation}
 It therefore suffices to show that $|(K+x_0) \Delta K| \leq C\delta_{\PP}(E)^{1/2}.$ By \cite[Lemma $17.9$]{maggi2012sets},
 \begin{equation}\label{center} 
 |K\Delta (K+x_0)| \leq 2 |x_0| P(K) \leq\frac{2n}{\m} |x_0| .
 \end{equation}
 Furthermore,  the barycenter constraint $\bary E= \bary K$ implies that
  $$
  x_0 =  \int_{K} x_0\, dx  =  \int_{E} x\, dx - \int_K x - x_0\, dx
  = \int_{E} x\,dx - \int_{K + x_0} x\,dx.
$$
 For $\e_1$ small enough depending on $\M/\m$, $E, K+x_0 \subset B_{2\M}$, a fact that is verified geometrically  since $|x_0| \to 0$ as $\e \to 0$ and thus $|x_0|$ may be taken as small as needed. Therefore,
 \begin{align*}
 |x_0|   =\bigg| \int_{E} x\,dx - \int_{K + x_0} x\,dx\bigg|
  \leq 2\M |E \Delta (K+x_0)| \leq \M C(n)\delta_{\PP}(E)^{1/2},
\end{align*}
where the second inequality comes from \eqref{FiMPStatement}. 
This,  \eqref{center}, and \eqref{bound1} prove \eqref{okok}.\\

\noindent {\it{Step 1(b):}} { \it For $\e_1$ sufficiently small depending on $\lambda$ and $n$, 
 \begin{equation}\label{koko}   \int_{\pa K} | u| \,  d\Hn \leq 2  |E \Delta K |  .\end{equation}
 }
Let $d_K(x) = \text{dist} (x,\pa K)$. As in the proof of Lemma~\ref{expansionlem}, there exists  $t_0 = t_0(\lambda, n)$ such that for all $t<t_0$, 
 $\{ d_K = t\} = \{x + t\nu_K(x) \}$. Take $\e_1<t_0$ and let $G_{t} =\{d_K = t\} \cap (E\setminus K)$. Then
 \begin{eqnarray*}
 E\setminus K &=& \{x+ t \nu_K\,  : \, x\in \{ x \in \pa K : u(x)>0\},\,  t \in (0, u(x) )  \},\\
 G_{t}& =&  \{ x + t \nu_K : \, x\in \{ x \in \pa K : u(x) >t\}\}.
 \end{eqnarray*}
 The coarea formula and the area formula  imply that
 \begin{align*} 
 |E\setminus K | &= \int_{E\setminus K } |\nabla d_K| \,dx = \int_0^{\infty} dt \int_{G_{t}}\, d\Hn
  = \int_0^{\infty} \,dt \int_{\{u>t\}} J (\text{Id} + t\nu_K)\, d\Hn,
  \end{align*}
so
  \begin{align*}
 |E\setminus K |  & \geq \frac{1}{2} \int_0^{\infty} dt \int_{\{u>t\} } \,d\Hn  = \frac{1}{2} \int_0^{\infty} |\{u>t\} |\,dt
  = \frac{1}{2} \int_{\pa K}  u^+\, d \Hn.
 \end{align*}
The analogous argument yields $|K\setminus E|  \geq \frac{1}{2} \int_{\pa K} u^-\,d\Hn$, and \eqref{koko} is shown.
Combining (\ref{okok}) and (\ref{koko}) implies (\ref{L1squared}).

 \vspace{3mm}
\noindent {\it{Step 2:}}
 {\it There exists $C=C(n, \lambda,\m/\M)$ such that, for  $\e_1=\e_1(n, \lambda, \m/\M)$ small enough,}
\begin{equation}\label{L2bound1} \|u\|_{H^1(\pa K)}^2 \leq C \delta_{\PP}(u).
\end{equation}
The $\lambda$-ellipticity of $f$ implies
\begin{align*}
 \int_{\pa K} |\nabla u|^2\, d\Hn & 
 \leq 
\frac{1}{\lambda} \int_{\pa K} (\nabla u)^{\text{T}} \nabla^2 f(\nu_K ) (\nabla u)\, d\Hn  = 
\frac{1}{\lambda}\Big(2B(u) + \int_{\pa K} H_K  |u|^2\,d\Hn\Big).
\end{align*}
The Wulff shape $K$ is bounded and $C^2$, so $H_K$ is bounded by a constant $C= C(n, \lambda)$. Therefore,
\begin{equation}\label{bound on gradient}
 \int_{\pa K} |\nabla u|^2\, d\Hn   \leq 
 \frac{2}{\lambda}B(u) +C \int_{\pa K}  |u|^2\,d\Hn.
\end{equation}

As pointed out in \cite[proof of Theorem 4]{de2014sharp}, one may produce a version of Nash's inequality on $\pa K$ that takes the form
 \begin{align}\label{nash}
 \int_{\pa K} |u|^2\, d \Hn \leq c \eta^{(n+2)/n} \int_{\pa K} |\nabla u|^2 \,d\Hn + \frac{c}{\eta^{(n+2)/2}}\Big( \int_{\pa K} |u| \,d \Hn\Big)^2
 \end{align}
for all $\eta>0$, where $c$ is a constant depending on $H_{K}$ (and therefore on $\lambda$ and $n$) and $\M/\m$. Indeed, as in $\Rn$, this form of Nash's inequality is a consequence of the Sobolev inequality on $\pa K$, shown in \cite[Section 18]{simon1984lectures}. 
We pair \eqref{nash} with \eqref{bound on gradient} and  \eqref{L1squared} to obtain
\begin{align*}
\int_{\pa K} |\nabla u|^2\, d\Hn&   \leq 
 \frac{2}{\lambda}B(u) +
 C \eta^{(n+2)/n}  \int_{\pa K} |\nabla u|^2\, d\Hn + \frac{C}{\eta^{(n+2)/2}} \delta_{\PP}(E).
 \end{align*}
 For $\eta$ small enough, we absorb the middle term into the left hand side. Then, recalling \eqref{deficit bilinear form}, we have
\begin{align*}
\frac{1}{2}\int_{\pa K} |\nabla u|^2\, d\Hn  \leq 
C \delta_{\PP}(E) + \e\,O \big(\|u\|_{H^1(\pa K)}^2\big).
\end{align*}
Combining this estimate with \eqref{nash} and \eqref{L1squared}, we find that $\int_{\pa K} |u|^2 \, d\Hn$ is also bounded by $C \delta_{\PP}(E) + \e\,O \big(\|u\|_{H^1(\pa K)}^2\big)$. Therefore, 
\begin{align*}
\| u \|_{H^1(\pa K)}^2  \leq 
C \delta_{\PP}(E) + \e\,O \big(\|u\|_{H^1(\pa K)}^2\big).
\end{align*}
Finally, taking $\e_1$ small enough, we absorb the second term on the right, proving \eqref{L2bound1}.
\end{proof}

We now show that if $\partial E = \{ x+ u\nu_K: x \in \partial K\}$ with $\|u\|_{C^1(\pa K)}$ small, then $\BPP(E)$ is controlled by $\|u\|_{H^1(\partial K)}.$ With the notation from the proof of Lemma~\ref{expansionlem},
\begin{align*} 
n|K|\BPP(E)^2  \leq& \int_{\partial E} f(\nu_E) - \frac{x}{f_*(x)}\cdot \nu_{E} \, d\Hn = \int_{\partial K } f(w) - x\cdot w \,  d\Hn.
\end{align*}
From the expansion of $\PP$ in the proof of Lemma~\ref{expansionlem} and the fact that $x \cdot \nu_K = f(\nu_K)$  by \eqref{bdK}, the right hand side is equal to
\begin{align*}
 \frac{1}{2}\int_{\partial K } (\nabla u)^{\text{T}} \nabla^2 f(\nu_K) \nabla u\, d\Hn
+ \e \, O( \|u\|_{H^1(\partial K)}^2) \leq C \| u\|_{H^1(\pa K)}^2+ \e \, O( \|u\|_{H^1(\partial K)}^2),
\end{align*}
where $C= \| \nabla^2 f\|_{C^{0}(\pa K)}$.
For $\e$ sufficiently small, we absorb the term $\e \, O( \|u\|_{H^1(\partial K)}^2)$ and have
\begin{equation}\label{betaexp}
\BPP(E)^2 \leq \frac{C}{n|K|} \|u\|_{H^1(\partial K)}^2.
\end{equation}
\begin{remark}
\rm{
This is the first point at which we use the upper bound on the Hessian of $f$. In other words, Proposition~\ref{Fug-type} still holds for surface tensions $f \in C^{1,1}(\Rn \setminus \{0\})$ that satisfy the lower bound on the Hessian in the definition of $\lambda$-ellipticity. 
}
\end{remark}

Next, we prove Theorem~\ref{Smooth}, for which we need the following definition.
\begin{definition}\label{pm} A set of finite perimeter $E$ is a $(\Lambda, r_0)$-minimizer of $\PP$, for some $0\leq \Lambda <\infty $ and $r_0>0$,
if 
 $$\PP(E ; B(x,r) ) \leq \PP(F; B(x,r)) + \Lambda|E\Delta F|$$
 for $E\Delta F \subset \subset B(x,r) $ and $r<r_0$. 
 \end{definition}
 
\begin{proof}[Proof of Theorem~\ref{Smooth}]
 Proposition~\ref{poincare} implies that the proof reduces
 to showing
\begin{equation}\label{betaonly}\BPP(E)^2 \leq C\delta_{\PP}(E).
\end{equation}  
where $C = C(n ,\lambda, \| \nabla^2 f\|_{C^0(\pa K)} , \m/\M)$.
Suppose for contradiction that \eqref{betaonly} fails. There exists a sequence $\{E_j\}$ such that $|E_j| = |K|$ for all $j$, $\delta_{\PP}(E_j) \to 0$, and 
\begin{align}\label{contrastatement}
\PP(E_j) \leq \PP(K) + c_2 \BPP(E_j)^2
\end{align} 
for $c_2$ to be chosen at the end of this proof. Arguing as in the proof of Theorem~\ref{th1}, we determine that, up to a subsequence, $\{E_j\}$ converges in $L^1$ to a translation of $K$.
As in the proof of Theorem~\ref{th1} (and as in \cite{fuscojulin11}), we replace the sequence $\{E_j\}$ with
 a new sequence $\{F_j\}$, where each $F_j$ is a minimizer of the problem
$$\min \left\{ Q_j (E)= \PP(E) + \frac{|K|\m}{8\M} \left| \BPP(E)^2  - \e_j^2\right|+ \Lambda \big| |E | - |K|\big| 
  \ \ : \ E \subset B_{R_0} \right\}$$
 with $\e_j = \BPP(E_j)$; existence for this problem is shown in
 Lemma~\ref{LSC}.  
Continuing as in the proof of Theorem~\ref{th1}, we determine that
  \begin{equation}\label{epsbeta2}
  \e_j^2 \leq 2 \BPP(F_j)^2,
  \end{equation}
that up to a subsequence and translation, $F_j \to K$ in $L^1$, and 
  that $|F_j| = |K|$ for $j$ sufficiently large.
By Lemma~\ref{UDE}, each $F_j$ satisfies uniform density estimates, and so
by Lemma~\ref{bound}, for any $\eta>0$, we may choose $j$ sufficiently large such that 
$K_{1-\eta} \subset F_j \subset K_{1+\eta}.$

Arguing as in \cite{fuscojulin11}, we show that $F_j$ is a $(\Lambda, r_0)$-minimizer of $\PP$ for $j$ large enough, where $\Lambda$ and $r_0$ are uniform in $j$. Let $G$ such that $G\Delta F_j \subset \subset B_r(x_0)$
 for $x_0 \in F_j$ and for $r<r_0$, where $r_0$ is to be fixed during the proof.
 For any $\eta>0$, if $B_r(x_0) \subset K_{1-\eta}$, then trivially $\PP(G) \geq \PP(F_j)$. 
 If $B_r(x_0) \not\subset K_{1-\eta},$
  then for $\eta$ sufficiently small, Lemma~\ref{3.1} implies that 
   $|y_{F_j}| \leq 1/4 $ and $|y_G|\leq 1/4$. 
   Furthermore, by choosing $\eta$ and $r_0$ sufficiently small, we may take $B_r(x_0) \cap K_{1/2} = \emptyset.$
The minimality of  $F_j$ implies $Q(F_j) \leq Q(G)$; after rearranging and applying the triangle inequality, this implies that
\begin{align}\label{aboveok}
\PP(F_j)& \leq \PP(G) + \Lambda | F_j \Delta G |  + \frac{|K|\m}{8\M}\left| \BPP(G)^2 - \BPP(F_j)^2 \right|.
 \end{align}
As in \eqref{bbound} in the proof of Lemma~\ref{UDE},
\begin{equation*}
\frac{|K|\m}{8\M}\left| \BPP (F)^2  - \BPP(G)^2 \right| 
 \leq 
 \ \frac{\left| \PP(F_j) - \PP(G) \right|}{2} + \frac{\left| \g_{\PP}(F_j) - \g_{\PP}(G) \right|}{2}  +4|F_j \Delta G|
\end{equation*} 
for $r_0$ small enough depending on $n$.
If $\PP(F_j) \leq \PP(G)$, then the $(\Lambda, r_0)$-minimizer condition is automatically satisfied. Otherwise, subtracting $\frac{1}{2}\PP(F_j)$ from both sides of \eqref{aboveok} 
and renormalizing, we have
\begin{align}\label{january}
\PP(F_j) \leq \PP(G) +  |\g_{\PP}(G) - \g_{\PP}(F_j)|+(8+ 2\Lambda)|F_j \Delta G|.
\end{align}
To control $|\g_{\PP}(G) - \g_{\PP}(F_j)|$, we need something sharper than the H\"{o}lder modulus of continuity of $\g_{\PP}$ given in Proposition~\ref{convergence}(2). Indeed, $\g_{\PP}$ is Lipschitz continuous for sets whose intersection contains a ball around their centers:
$$
\g_{\PP}(F_j) - \g_{\PP}(G) 
\leq \int_{F_j} \frac{dx}{f_*(x-y_{F_j})} - \int_G \frac{dx}{f_*(x-y_{F_j})} 
= \int_{F_j \Delta G} \frac{dx}{f_*(x-y_{F_j})},
$$
and analogously, 
$$
\g_{\PP}(G) - \g_{\PP}(F_j ) \leq \int_{F_j \Delta G} \frac{dx}{f_*(x-y_G)}.
$$
Since $B_r \cap K_{1/2} = \emptyset$, $|y_{F_j}| \leq 1/4$, and $|y_G| \leq 1/4$, we know that $1/f_*(x-y_{F_j}) \geq 4/m_{\PP}$ and $1/f_*(x-y_G) \geq 4/m_{\PP}$ for any $x \in F_j \Delta G$, implying that 
$$|\g_{\PP}(F_j) - \g_{\PP}(G) | \leq \frac{4}{m_{\PP}}|F_j \Delta G|.$$
Therefore, \eqref{january} becomes
\begin{equation}
\PP(F_j) \leq \PP(G) + \Lambda_0 \left|F_j \Delta G\right|,
\end{equation}
where $\Lambda_0= 8 + 2\Lambda + 4/\m,$ and so $F_j$ is a $(\Lambda_0, r_0)$-minimizer for $j$ large enough. 

We now exploit some regularity theorems for sets $F_j$ that are $(\Lambda, r_0)$-minimizers that converge in $L^1$ to a $C^2$ set. First, let us introduce a bit of notation. 
For $x \in \Rn$, $r>0$, and $\nu \in S^{n-1}$, we define
\begin{align*} 
\textbf{C}_{\nu}(x,r) = \{ y\in \mathbb{R}^{n} : |p_{\nu} (y-x)|<r, |q_{\nu}(y-x) < r\},\\
\textbf{D}_{\nu} (x,r) = \{ y \in \Rn : |p_{\nu}(y-x)| <r , |q_{\nu}(y-x) | = 0\},
\end{align*}
where $q_{\nu}(y) =  y \cdot \nu$ and $p_{\nu}(y) = y - (y \cdot \nu) y.$
We then define the \textit{cylindrical excess} of $E$ at $x$ in direction $\nu$ at scale $r$ to be
$$\textbf{exc} (E, x ,r,\nu ) = \frac{1}{r^{n-1}} \int_{\textbf{C}_{\nu}(x,r) \cap \partial^* E}  \frac{|\nu_E - \nu|^2}{2} \,d\Hn$$

The following regularity theorem for almost minimizers of an elliptic integrand is the translation in the language of sets of finite perimeter of a classical result in the theory of currents, see \cite{alm66, SSA77, bomb82, DuzaarSteffen02}. For a closer statement to ours, see Lemma 3.1 in \cite{DePMag14}.

\begin{theorem} \label{epsreg}
Let $f$ be a $\lambda$-elliptic surface tension with corresponding surface energy $\PP$. Suppose $E$ is a $(\Lambda, r_0)$-minimizer  of $\PP$. For all $\alpha<1$ there exist constants $\e$ and $C_1$ depending on $n, \lambda$ and $\alpha$ such that if
$${\rm{\bf{exc}}}(E, x,r,\nu) + \Lambda r <\e$$
then there exists $u\in C^{1,\alpha}(\textbf{D}_{\nu}(x,r))$ with $u(x) =0$ such that 
\begin{align*}
 \textbf{C}_{\nu}(x,r/2) \cap \partial^*E & = ({\rm{Id}} + u \nu)({\rm{\bf{D}}}_{\nu}(x,r/2)),\\
  \|u\|_{C^0({\rm{\bf{D}}}_\nu(x_0, r/2))}& < C_1r\, {\rm{\bf{exc}}}(E,x,r,\nu)^{1/(2n-2)},\\
  \|\nabla u\|_{C^0({\rm{\bf{D}}}_{\nu}(x_0, r/2))}
&< C_1\, {\rm{\bf{exc}}}(E,x,r,\nu)^{1/(2n-2)},\\
 \text{and} \qquad r^{\alpha} [\nabla u ]_{C^{0,\alpha}({\rm{\bf{D}}}_{\nu}(x,r/2))}& < C_1\, {\rm{\bf{exc}}}(E,x,r,\nu)^{1/2}.
\end{align*} 
\end{theorem}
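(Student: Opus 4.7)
The statement is a standard $\varepsilon$-regularity theorem of De Giorgi type for almost-minimizers of a uniformly elliptic integrand, and the natural plan is to run the classical scheme (Lipschitz approximation $\to$ Caccioppoli $\to$ harmonic approximation $\to$ excess decay $\to$ iteration) adapted to the anisotropic functional $\PP$, the ellipticity of $\nabla^2 f(\nu)$ on the tangent space of $\nu$ playing the role usually played by Euclidean ellipticity of the area integrand. The bounds in the conclusion are just the quantitative outputs of this scheme.

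First, under the smallness assumption $\mathbf{exc}(E,x,r,\nu) + \Lambda r < \varepsilon$, I would prove a Lipschitz approximation lemma: there is a $1$-Lipschitz function $v : \mathbf{D}_{\nu}(x,r)\to\mathbb{R}$ whose graph covers $\partial^*E\cap \mathbf{C}_{\nu}(x,r)$ outside a set whose $\mathcal{H}^{n-1}$ measure is controlled by the excess. Using the $(\Lambda,r_0)$-minimality of $E$ against modifications supported in $\mathbf{C}_{\nu}(x,r)$, together with the $1$-homogeneity of $f$, one derives a Caccioppoli-type inequality for $v$: on a smaller disk $\mathbf{D}_{\nu}(x,r/2)$, the $L^2$ norm of $\nabla v$ is bounded by the $L^2$ oscillation of $v$ plus a term of order $\Lambda r$.

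Next comes the main step, the harmonic approximation. The Euler--Lagrange equation of $\PP$ at a set with normal close to $\nu$ linearizes to $\mathrm{div}(A\,\nabla w) = 0$ on $\mathbf{D}_{\nu}(x,r)$, where $A$ is $\nabla^2 f(\nu)$ restricted to the tangent hyperplane of $\nu$; by $\lambda$-ellipticity this operator is uniformly elliptic (with constants depending on $\lambda$ and $\|\nabla^2 f\|_{C^0}$, uniform on $\partial K$). The Caccioppoli inequality together with the $(\Lambda,r_0)$-minimality then forces $v$ to be $L^2$-close, at a rate quantified by the excess and by $\Lambda r$, to a solution $w$ of this linear equation, on a still smaller disk. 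Since solutions of a linear elliptic equation with constant coefficients are smooth with universal $C^{2}$ estimates, one obtains the crucial one-step decay
\begin{equation*}
\mathbf{exc}(E,x,\theta r,\nu') \leq \theta^{2\alpha}\, \mathbf{exc}(E,x,r,\nu) + C\,\Lambda r,
\end{equation*}
for some $\theta\in(0,1)$ and some rotated direction $\nu'$ with $|\nu'-\nu|$ controlled by the excess.

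Iterating this decay on a sequence of shrinking cylinders centered at an arbitrary point $y \in \partial^* E\cap \mathbf{C}_{\nu}(x,r/2)$ produces a Campanato-type estimate that yields $C^{1,\alpha}$ regularity of the graph parametrization $u$, with the $[\nabla u]_{C^{0,\alpha}}$-bound being an immediate consequence of the geometric rate $\theta^{2\alpha}$. The $C^0$ and $\nabla u$ bounds with the exponent $1/(2n-2)$ come from combining the Lipschitz approximation with height bounds derived from the isoperimetric inequality on the Lipschitz graph; this gives the $L^\infty$ control of $u$ by the square root of the excess in the integrated height sense, which converts to the exponent $1/(2n-2)$ after accounting for the $(n-1)$-dimensional area of the base disk. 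The main obstacle in carrying this out is the harmonic approximation step, where one must replace the Euclidean area integrand by $f$ and keep track of the dependence of all constants on $\lambda$ alone; fortunately this is exactly what has been done in the current-theoretic setting by Almgren, Schoen--Simon--Almgren, Bombieri, and Duzaar--Steffen, and in the language of sets of finite perimeter in Lemma 3.1 of \cite{DePMag14}, from which the statement can be quoted.
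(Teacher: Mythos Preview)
Your proposal is correct and matches the paper's treatment: the paper does not prove Theorem~\ref{epsreg} but simply cites it as a classical result from the theory of currents (Almgren, Schoen--Simon--Almgren, Bombieri, Duzaar--Steffen), pointing to Lemma~3.1 of \cite{DePMag14} for a statement in the language of sets of finite perimeter. Your sketch of the De Giorgi scheme is a helpful outline of what lies behind those references, and you land on the same citations, so there is no discrepancy.
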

Applying Theorem~\ref{epsreg} as in \cite{CiLe12}, we come to prove the following statement.
 \begin{theorem}\label{improvedconvergence}
Let $f$ be $\lambda$-elliptic with corresponding surface energy $\PP$ and let $\{E_j\}$ be a sequence of $(\Lambda, r_0)$-minimizers such that $E_j \to E$ in $L^1$, with $\partial E \in C^2$. Then there exist functions $\psi_j \in C^1(\partial E)$ such that 
$$\partial E_j =({\rm{Id}} + \psi_j \nu_E)(\partial E),$$
and $\|\psi_j\|_{C^1(\partial E)} \to 0.$
\end{theorem}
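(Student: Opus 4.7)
The plan is to prove this by combining uniform density estimates, Hausdorff convergence of boundaries, an excess-decay argument that uses the $C^2$ regularity of $\partial E$, and the $\varepsilon$-regularity theorem already stated as Theorem~\ref{epsreg}. I would first observe that $(\Lambda, r_0)$-minimizers of $\PP$ satisfy uniform density estimates (essentially the argument of Lemma~\ref{UDE}, without the $\BPP$ term, which is even easier), so together with $E_j \to E$ in $L^1$ one gets $\partial E_j \to \partial E$ in Hausdorff distance. In particular, for $j$ large every point of $\partial E_j$ lies in a tubular neighborhood of $\partial E$ where the nearest-point projection $\pi \colon U \to \partial E$ is well-defined and smooth (this uses $\partial E \in C^2$).

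The second step is to produce uniform smallness of the cylindrical excess of $E_j$ on a fixed small scale. Because $\partial E \in C^2$, for every $x \in \partial E$ and every sufficiently small $r$ one has ${\rm\bf exc}(E, x, r, \nu_E(x)) \leq C r^2$ with $C$ depending only on the $C^2$ norm of $\partial E$. Using the almost-minimality, one shows convergence of the surface energies $\PP(E_j) \to \PP(E)$ by comparing $E_j$ with $E$ inside balls of radius $r_0$; combined with lower semicontinuity of $\PP$ (Proposition~\ref{convergence}(1)) and the Hausdorff convergence this yields, for $x_j \in \partial E_j$ with $x_j \to x \in \partial E$, that
\[
\limsup_{j\to\infty} {\rm\bf exc}(E_j, x_j, r, \nu_E(x)) \leq 2\,{\rm\bf exc}(E, x, r, \nu_E(x)) \leq C r^2.
\]
Fixing $r$ small enough so that the right-hand side is below the threshold $\varepsilon$ of Theorem~\ref{epsreg} (and $\Lambda r < \varepsilon/2$), the $\varepsilon$-regularity theorem applies uniformly at every point of $\partial E_j$ for $j$ large.

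The third step converts these local graph representations into a single graph over $\partial E$ via the map $\pi$. For each $x_j \in \partial E_j$, Theorem~\ref{epsreg} gives $C^{1,\alpha}$ control of $\partial E_j$ as a graph over the tangent plane to $\partial E$ at $\pi(x_j)$, with $C^{1,\alpha}$ norm going to $0$ as $j \to \infty$ (since the excess is $O(r^2)$ and the $C^0$ and gradient bounds in Theorem~\ref{epsreg} scale by a positive power of the excess). The tangent plane to $\partial E$ at $\pi(x_j)$ and the one at nearby points differ by an angle controlled by the $C^2$ norm of $\partial E$; combining this with the uniform $C^1$ smallness of the local graphs shows that the normal $\nu_E(\pi(x_j))$ remains transverse to $\partial E_j$. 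Hence the projection $\pi \colon \partial E_j \to \partial E$ is a local $C^1$ diffeomorphism with $C^1$-small inverse perturbation, and by the Hausdorff convergence it is also proper and injective for $j$ large, yielding a well-defined $\psi_j \in C^1(\partial E)$ with $\partial E_j = (\mathrm{Id} + \psi_j \nu_E)(\partial E)$.

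Finally, $\|\psi_j\|_{C^0(\partial E)}$ is bounded by the Hausdorff distance of $\partial E_j$ to $\partial E$, which tends to $0$, and $\|\nabla \psi_j\|_{C^0(\partial E)}$ is bounded by the $C^1$ norm of the local graphs provided by Theorem~\ref{epsreg}, which tends to $0$ by the excess estimate above. I expect the main obstacle to be the second step: establishing that ${\rm\bf exc}(E_j, x_j, r, \nu_E(x))$ is small uniformly in $x_j \in \partial E_j$, which requires promoting the $L^1$ convergence $E_j \to E$ to convergence of the surface measures $f(\nu_{E_j})\,\Hn \llcorner \partial^* E_j \rightharpoonup f(\nu_E)\,\Hn \llcorner \partial^* E$ as Radon measures. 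This is the standard consequence of almost-minimality plus lower semicontinuity, but it is the delicate point where the hypothesis that $E_j$ is a $(\Lambda, r_0)$-minimizer (rather than merely $L^1$-convergent) is used in an essential way.
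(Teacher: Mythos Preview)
Your proposal is correct and follows the standard route. The paper itself does not give a detailed proof of this statement: it simply says ``Applying Theorem~\ref{epsreg} as in \cite{CiLe12}, we come to prove the following statement,'' and then states the theorem. Your outline---density estimates giving Hausdorff convergence, smallness of excess via convergence of the anisotropic surface measures (which is exactly the point where almost-minimality is used), application of the $\varepsilon$-regularity Theorem~\ref{epsreg} at a fixed small scale, and patching the local $C^{1,\alpha}$ graphs into a global normal graph via the nearest-point projection---is precisely the argument carried out in \cite{CiLe12} (there for perimeter, here transplanted verbatim to the $\lambda$-elliptic setting). You have also correctly identified the one nontrivial step, namely upgrading $L^1$ convergence to weak-$*$ convergence of the measures $f(\nu_{E_j})\,\Hn\llcorner\partial^*E_j$, which is what makes the excess small uniformly.
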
 

 Theorem~\ref{improvedconvergence} implies that we may express $\partial F_j$ as 
$$\partial F_j  = \{ x+ \psi_j \nu_K :  x \in \partial K\},$$
where $\|\psi_j\|_{C^1(\partial K)} \to 0$. 
Moreover, $\bary F_j = \bary K$ and $|F_j| = |K|$, so Proposition~\ref{Fug-type} and \eqref{betaexp} imply that
\begin{equation}\label{here} C \delta_{\PP}(F_j)\geq \| \psi_j \|_{H^1(\partial K)}^2 \geq c \BPP(F_j)^2.
\end{equation}
On the other hand, $F_j$ minimizes $Q_j$, so choosing $E_j$ as a comparison set and using (\ref{contrastatement}) and (\ref{epsbeta2}), we have 
$$\PP(F_j) \leq \PP(E_j) \leq \PP(K) + c_2\e_j^2 \leq \PP(K) + 2c_2\BPP(F_j)^2.$$
By \eqref{epsbeta2}, $\BPP(F_j)>0,$. Then, using \eqref{here} and choosing $c_2$ sufficiently small, we reach a contradiction. 
\end{proof}  


\section{The case of  crystalline surface tension in dimension $2$}\label{crystalsection}
In this section, we prove Theorem~\ref{dim2}.
As in the previous section, we begin by showing the result in a special case, and then use a selection principle argument paired with specific regularity properties to reduce to this case.

Let $n=2$ and suppose that $f$ is a crystalline surface tension as defined in Definition~\ref{crystals}, with $\PP$ the corresponding anisotropic surface energy.
 The corresponding Wulff shape $K \subset \mathbb{R}^2 $ 
is a convex polygon with normal vectors $\{\nu_i\}_{i=1}^N$. Let us fix some notation to describe $K$, illustrated in Figure $1$. Denote by $s_i$ the side of $K$
 with normal vector $\nu_i$, choosing the indices
 such that $s_i$ is adjacent to $s_{i+1}$ and $s_{i-1}$. Let $\theta_i \in (0,\pi)$ be the angle between $s_i$ and $s_{i+1}$, adopting the convention that $s_{n+1}  = s_1$. Let $H_i$ be the distance from the origin to the side $s_i$. By construction,
\begin{equation}\label{Hi}
f(\nu_i) = H_i.
\end{equation}

We say that a set $E\subset \mathbb{R}^2$ is \textit{parallel to $K$} if $E$ is an open convex polygon with $\{\nu_E\}= \{\nu_i\}_{i=1}^N$, that is, $\nu_E(x)\in \{\nu_i\}_{i=1}^N$ for all $ x \in \partial^*E$, and for each $i \in \{1,\hdots, N\}$, there exists $x \in \pa^*E$ with $\nu_E(x) = \nu_i$. For a set $E$ that is parallel to $K$, we denote by $\sigma_i$ the side of $E$ 
with normal vector $\nu_i$, and $h_i$ the distance between the origin and $\sigma_i$; again see Figure $1$.
 We define $\e_i= h_i - H_i$. Notice that $\e_i$ has a sign, with $\e_i \geq 0$ when dist$(0, s_i) \leq $ dist$(0, \sigma_i)$ and $\e_i \leq 0 $ 
when dist$(0, s_i) \geq $ dist$(0, \sigma_i)$. For simplicity of notation, we let $|s | = \mathcal{H}^1(s)$ for any line segment $s$. 

The following proposition proves strong form stability for sets $E$ that are parallel to $K$ such that  $|E|=|K|$ and $|E\Delta K| ={\inf} \{|E\Delta (K+y)| : y\in \mathbb{R}^2\}$. Then, by a selection principle-type argument and a rigidity result, we will reduce to this case.

\begin{proposition}
\label{crystalprop}
Let $E\subset \mathbb{R}^2$ be parallel to $K$ such that $|E| = |K|$ and $|E\Delta K| ={\inf} \{ |E\Delta (K+y)|: y\in \mathbb{R}^2\}$.
 Then there exists a constant $C$ depending on $f$ such that 
$$\BPP(E)^2 \leq C\delta_{\PP}(E).$$
\end{proposition}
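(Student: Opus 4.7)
The plan is to parameterize $E$ by the signed face-displacements $\e_i = h_i - H_i$ and establish the inequality as an algebraic statement about polygons parallel to $K$. The hypotheses that $|E| = |K|$ and $|E \Delta K|$ is minimized over translations leave a finite-dimensional family (of dimension $N-3$, after accounting for the volume equality and the two translation parameters).

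First I would dispose of the non-degenerate regime by compactness. On the finite-dimensional space of polygons parallel to $K$ with $|E|=|K|$ in the translation-minimizing position, $\delta_{\PP}$ is a continuous function, and the Wulff inequality forces $\delta_{\PP}(E) = 0$ only when $E = K$. Hence for any $\eta > 0$ there exists $c = c(f,\eta) > 0$ with $\delta_{\PP}(E) \geq c$ whenever $|E \Delta K| \geq \eta$. Combined with the trivial upper bound $\BPP(E)^2 \leq \PP(E)/(2|K|) = 1 + \delta_{\PP}(E)$, this handles the case $|E \Delta K| \geq \eta$ with the constant $C = 1 + c^{-1}$.

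For $|E \Delta K| < \eta$ with $\eta = \eta(f)$ sufficiently small, all $|\e_i|$ are uniformly small and the side lengths $|\sigma_i|$ are explicit linear functions of the $h_j$'s. Two identities drive the analysis. First, the equality $|E| = |K|$ combined with $|E| = \tfrac{1}{2}\sum_i h_i |\sigma_i|$ and $\PP(K) = \sum_i H_i L_i = 2|K|$ yields
\[
\PP(E) - \PP(K) = -\sum_i \e_i |\sigma_i|.
\]
Second, choosing the center $y = 0$ in the sup-definition of $\g_{\PP}(E)$ and applying the divergence theorem with $\DIV(x/f_*(x)) = 1/f_*(x)$ gives
\[
2|K|\BPP(E)^2 \leq \PP(E) - \int_E \frac{dx}{f_*(x)} = \sum_i \left( H_i |\sigma_i| - h_i \int_{\sigma_i} \frac{d\Hn(x)}{f_*(x)} \right).
\]
Each boundary integral can then be evaluated by decomposing $\sigma_i$ according to the cones $C_j = \{tx : t>0,\, x \in s_j\}$ emanating from the origin through the faces of $K$: on $C_j$ the gauge is the linear function $f_*(x) = (x \cdot \nu_j)/H_j$, making each piece of the integral explicit.

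A Taylor expansion to second order in $\e$ then exhibits both $\delta_{\PP}(E)$ and the upper bound on $\BPP(E)^2$ as positive-semidefinite quadratic forms in $\e$, subject to the linear constraints coming from the volume equality and the $|E\Delta K|$-minimization. The desired inequality reduces to a finite-dimensional quadratic-form comparison on the resulting constraint subspace, from which one reads off a constant $C = C(f)$. The main obstacle is the bookkeeping for the cone decomposition: as $\e$ varies, the partition $\{\sigma_i \cap C_j\}_j$ shifts and produces corner contributions of order $|\e|^3$, which must be controlled by verifying that the leading quadratic form is positive-definite on the constraint subspace, using the geometry of $K$ (the angles $\theta_i$ and distances $H_i$) in an essential way.
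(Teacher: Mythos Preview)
Your route is genuinely different from the paper's, and the difference matters at the one step you leave unproven.

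Both you and the paper start from the same identities: $\PP(E)-\PP(K)=-\sum_i\e_i|\sigma_i|$ and the bound $2|K|\BPP(E)^2\le \PP(E)-\int_E dx/f_*(x)$ obtained by choosing the center $y=0$. From there the paper does \emph{not} expand $\BPP(E)^2$ as a quadratic form and compare Hessians. Instead it writes
\[
\BPP(E)^2 \le \delta_\PP(E) + \frac{1}{2|K|}\Big(\int_{K\setminus E}\frac{dx}{f_*(x)}-\int_{E\setminus K}\frac{dx}{f_*(x)}\Big),
\]
evaluates the two integrals via the anisotropic coarea formula (integrating the relative surface energy of dilates $rK$ across the strips between $s_i$ and $\sigma_i$), and shows that the bracket equals $-\sum_i\e_i|\sigma_i| + O(|\e|^2) = 2|K|\delta_\PP(E)+O(|\e|^2)$. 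So everything reduces to bounding the $O(|\e|^2)$ remainders by $\delta_\PP(E)$.

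Here is where the two arguments diverge. The paper uses the already-proved sharp quantitative Wulff inequality of Figalli--Maggi--Pratelli: since the hypothesis $|E\Delta K|=\inf_y|E\Delta(K+y)|$ means $|E\Delta K|=\alpha_\PP(E)|E|$, one has
\[
\Big(\sum_i|\e_i|\Big)^2 \le C\,|E\Delta K|^2 \le C\,\delta_\PP(E),
\]
and every second-order error is immediately absorbed. Your proposal instead asserts that the quadratic form arising from the second-order expansion of $\delta_\PP$ is positive-definite on the constraint subspace, and that one can therefore ``read off'' a constant $C$. That positive-definiteness is exactly the content of the quantitative Wulff inequality localized at $K$; the Wulff equality case alone tells you the zero set of $\delta_\PP$ is two-dimensional (translations), but it does \emph{not} tell you the Hessian has only a two-dimensional kernel. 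You have not verified this, and it is the heart of the inequality, not a bookkeeping detail. The cleanest fix is to do what the paper does: invoke $\alpha_\PP(E)^2\le C(n)\delta_\PP(E)$ to obtain $|\e|^2\le C\delta_\PP(E)$, after which your cone-decomposition computation (which is a perfectly valid alternative to the coarea argument) closes without further analysis of any Hessian.
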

\begin{proof}
Let $E$ be as in the hypothesis of the proposition. 
By (\ref{Hi}), we have
\begin{align*} 
\PP(E)  =\sum_{i=1}^N H_i |\sigma_i|,\qquad
\PP(K)  =\sum_{i=1}^N H_i |s_i|, \qquad
|E| =
 \sum_{i=1}^N \frac{h_i |\sigma_i|}{2}, \qquad
  |K| = \sum_{i=1}^N \frac{ H_i |s_i|}{2}.
  \end{align*}
  \begin{figure}[t]
\begin{center}
\includegraphics[scale=0.30]{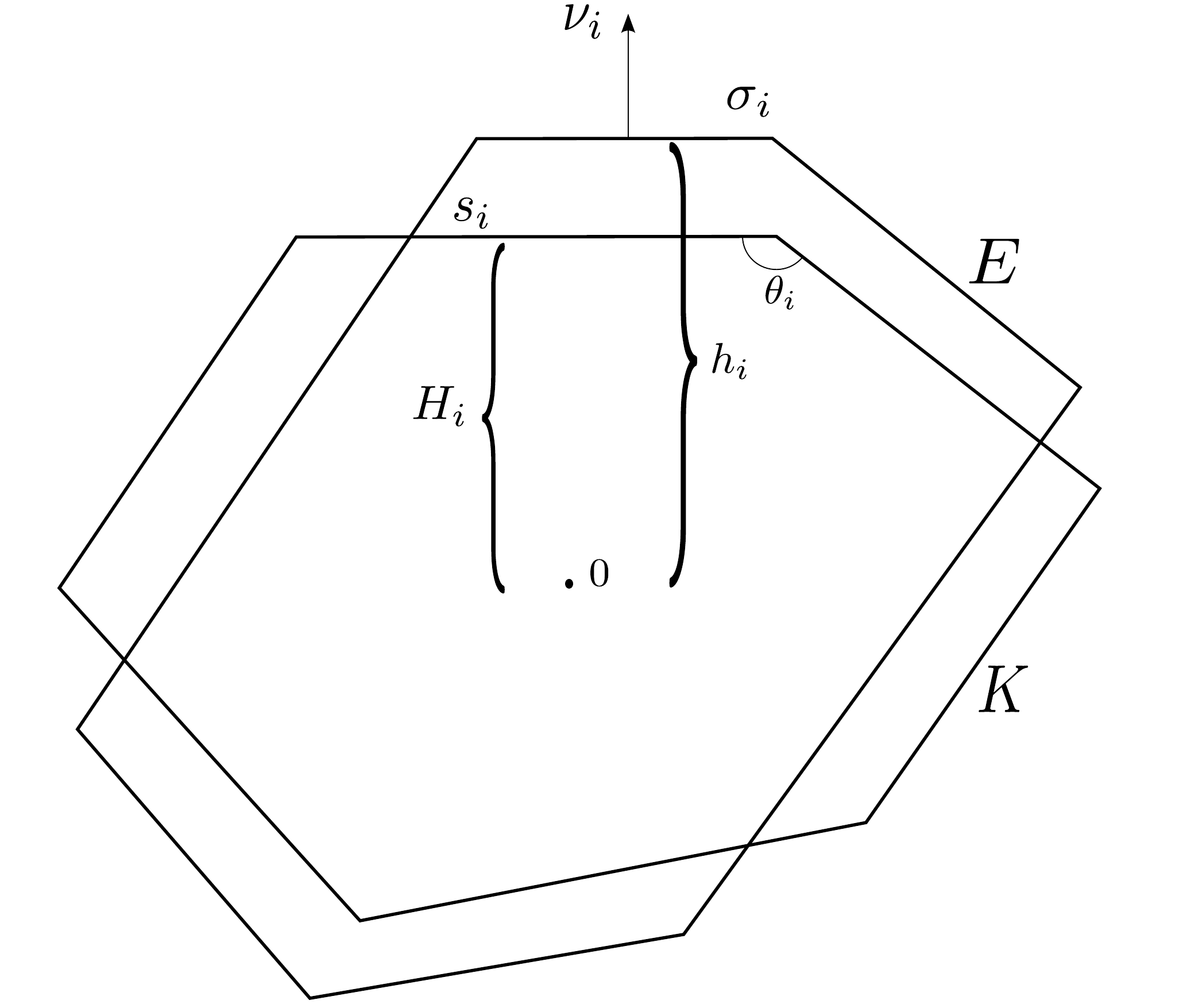}\caption{\small{Notation used for $K$ and a parallel set $E$.}}
\end{center}
\end{figure}
Recalling that
 $\e_i = h_i - H_i$, we may express the volume constraint $|E| = |K|$ as
$$\sum_{i=1}^N \frac{ H_i |s_i|}{2} = |K| = |E| = \sum_{i=1}^N \frac{ H_i |\sigma_i| }{2} + \sum_{i=1}^N \frac{ \e_i |\sigma_i|}{2}.$$
Furthermore,
\begin{equation} \label{deficit} 2|K|\delta_{\PP}(E) =\PP(E) -\PP(K)=\sum_{i=1}^N  H_i (|\sigma_i| - |s_i| )= -\sum_{i=1}^N \e_i |\sigma_i|. \end{equation}
Note that $\sum_{i=1}^N |\e_i| \leq C |E\Delta K|$ for some constant $C=C(f)$, and so by (\ref{FiMPStatement}),
\begin{equation} \label{epsilon}\bigg( \sum_{i=1}^N |\e_i|\bigg)^2 \leq C\delta_{\PP}(E),
\end{equation}
and in particular, $|\e_i|^2 \leq C\delta_{\PP}(E)$ for each $i$.\\

\noindent \textit{Step 1:} We use (\ref{betaform}) and add and subtract $\frac{\PP(K)}{2|K|} = \frac{\g_{\PP}(K)}{2|K|}$ to obtain
\begin{align*}
\BPP(E)^2
\leq \frac{1}{2|K|}\Big(\PP(E)  -\int_E \frac{dx}{f_*(x) }\Big)  &=\delta_{\PP}(E)  + \frac{1}{2|K|} \Big(\int_{K \setminus E} \frac{dx}{f_*(x)}
  - \int_{E \setminus K} \frac{dx}{f_*(x)}\Big).
  \end{align*} 
  Thus we need only to control the term $A-B$ linearly by the deficit, where
\[ A = \int_{K \setminus E}  \frac{dx}{f_*(x) } , \qquad \qquad
B= \int_{E\setminus K} \frac{dx}{f_*(x) }.
\]
To bound the term $A-B$ from above, we bound $A$ from above and bound $B$ from below. Our main tool is the anisotropic coarea formula in the form given in \eqref{awcoarea}.
 
First, we consider the term $A$, where (\ref{awcoarea}) yields
\begin{equation}\label{Coarea A}
A = \int_{K\setminus E} \frac{dx}{f_*(x)} = \int_0^\infty \frac{\PP(rK; K\setminus E)}{r}dr
=\int_0^1 \frac{\PP(rK; K\setminus E)}{r}dr.
\end{equation}
We introduce the notation
$$ I^- = \{ i \in \{1, \hdots N\} : \e_i<0\}, \qquad
I^+ = \{1, \hdots N\} \setminus I^-.$$
From \eqref{Coarea A}, we obtain an upper bound on $A$ by integrating over $r$, for each $i \in I^-$, the part of the perimeter of $rK$ that lies between $\sigma_i$ and $s_i$. This means that for each $r$, we pick up the part of $\partial^*(rK)$ that is parallel to $\sigma_i$ and $s_i$, as well as part of the adjacent sides: 
\begin{equation}\label{seepic}\PP(rK; K\setminus E) \leq  \sum_{I^-}\left[ H_i r |s_i| + H_{i-1}  \frac{(rH_{i} - h_i)}{\sin(\theta_{i-1})} 
 + H_{i+1} \frac{(rH_i - h_i )}{\sin( \theta_i)} \right];
 \end{equation}
see Figure 2 and recall \eqref{Hi}.
  \begin{figure}\label{seeline}
\begin{center}
\includegraphics[scale=0.75]{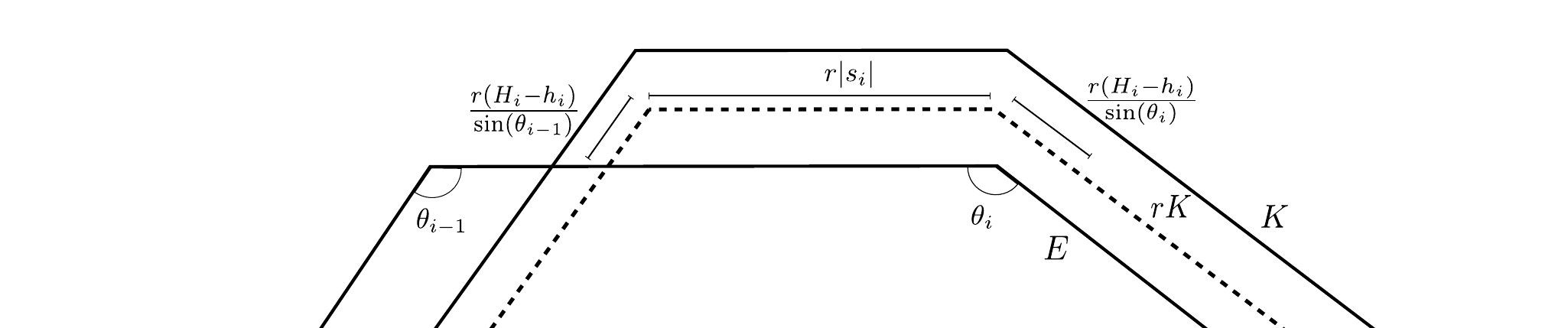}\caption{\small{The surface energy of $rK$ relative to $K\setminus E$ is bounded by the right hand side of \eqref{seepic}.}}
\end{center}
\end{figure}
This and \eqref{Coarea A} imply that 
 \begin{equation}\label{5}
 A
 \leq \sum_{I^-}
  \int_{h_i/H_i}^1
   \left[ H_i r |s_i| + H_{i-1}  \frac{(rH_{i} - h_i)}{\sin(\theta_{i-1})} 
 + H_{i+1} \frac{(rH_i - h_i )}{\sin( \theta_i)} \right] \frac{dr}{r},
 \end{equation}

Now we add and subtract the term  $\int_{h_i/H_i}^1 H_i |\sigma_i| \frac{dr}{r}.$ The idea is that $H_i |\sigma_i|$ gives a rough estimate of the term in brackets on the right hand side of $(\ref{5})$. Indeed, for each $r$, the part of $\partial^*(r K)$ between $\sigma_i$ and $s_i$ has length roughly equal to $H_i|\sigma_i|$. We will see that this estimate is not too rough; the error can be controlled by the deficit. Thus we rewrite \eqref{5} as
\begin{align*}
A& \leq  \sum_{i \in I^-} \int_{h_i/H_i}^1 \frac{H_i |\sigma_i|}{r}\,dr
+\sum_{i \in I^-} \int_{h_i/H_i}^1 H_i |s_i| + \left[ H_i - \frac{h_i}{r} \right] \left( \frac{H_{i-1}}{\sin (\theta_{i-1}) } + \frac{H_{i+1}}{\sin(\theta_i)} \right) - \frac{H_i |\sigma_i|}{r} \,dr.
\end{align*}
Noting that $H_i/\sin(\theta_j) \leq C =C(f)$ for each $i, j$, the right hand side is bounded by $A_1 + A_2$, 
where
\[
A_1 = 
\sum_{i \in I^-} \int_{h_i/H_i}^1 \frac{H_i |\sigma_i|}{r}dr , \qquad A_2 = \sum_{i \in I^- } \int_{h_i/H_i}^1 H_i |s_i| +C \left[ H_i - \frac{h_i}{r} \right]  - \frac{H_i |\sigma_i|}{r}dr.
\]
 The term $A_2$ is the error term that we will show is controlled by the deficit in Step $2$.

 First, we perform an analogous computation for $B$, 
and show how, once the error terms are taken care of,
 the proof is complete.
 Again, by (\ref{awcoarea}), we have
$$B = \int_{E\setminus K} \frac{dx}{f_*(x) }
=
 \int_0^{\infty} \frac{\PP(rK ; E\setminus K)}{r}\, dr=
 \int_1^{\infty} \frac{\PP(rK ; E\setminus K)}{r} \,dr.$$
To bound $B$ from below, we integrate, for each $i\in I^+$, only the part of $\partial^*(rK)$ that is parallel to $s_i$ and $\sigma_i$ and lies between $s_i$ and $\sigma_i$ . We call this segment
 $\ell_i^r := E\setminus K \cap \{e_i + rx_i\}$, where $e_i$ is the vector parallel to the sides $\sigma_i $ and $s_i$, $x_i \in s_i$, and $r \in [1, h_i/H_i]$. 
 
Thus, letting $s_i^r$ be the side of $rK$ parallel to $s_i$ and recalling \eqref{Hi},  we have
$$\int_1^{\infty} \frac{\PP(rK; E\setminus K)}{r} \,dr 
\geq \sum_{i \in I^+} \int_1^{h_i/H_i}\frac{ H_i|s_i^r \cap \ell_i^r| }{r}\,dr.$$
Once again, a rough estimate for $H_i|s_i^r \cap \ell_i^r |$ is given by $H_i |\sigma_i|.$ We will again show that this estimate is not too rough, specifically, that the error between these integrals is controlled by the deficit.
So we continue:
$$B\geq  \sum_{i \in I^+} \int_1^{h_i/H_i} \frac{H_i |\sigma_i| }{r} \,dr
+ \sum_{i \in I^+} \int_1^{h_i/H_i} \frac{H_i|s_i^r \cap \ell_i^r|}{r} - \frac{H_i |\sigma_i|}{r} dr =B_1 +B_2,$$
  where 
\begin{align*}
B_1 = \sum_{i \in I^+} \int_1^{h_i/H_i} \frac{H_i |\sigma_i| }{r} \,dr,\qquad B_2& = \sum_{i \in I^+} \int_1^{h_i/H_i} \frac{H_i|s^r_i \cap \ell_i^r|}{r} - \frac{H_i |\sigma_i|}{r}\, dr.
 \end{align*}
 Like $A_2$, $B_2$ is an error term that we will show is controlled by the deficit in Step $2$. 

Before bounding $|A_2|$ and $|B_2|$ by the deficit, let us see how this will conclude the proof. As we saw,
$\BPP(E)^2 \leq \delta_{\PP}(E) + \frac{1}{2|K|}(A-B). $
Recalling that $h_i = H_i + \e_i$, 
\begin{align*}
A-B &= \sum_{i \in I^-} \int_{h_i/H_i}^1 \frac{H_i |\sigma_i|}{r} dr
    - \sum_{i \in I^+ } \int_1^{h_i/H_i} \frac{H_i |\sigma_i|}{r} dr+ A_2 -B_2\\
& = -\sum_{i \in I^-} H_i |\sigma_i| \log \Big(\frac{h_i}{H_i} \Big)  
- \sum_{i \in I^+} H_i |\sigma_i| \log\Big(\frac{h_i}{H_i} \Big) + A_2- B_2 \\
& =- \sum_{i =1}^N H_i |\sigma_i|\Big(\frac{\e_i}{H_i} + O(\e_i^2)\Big)
 + A_2 - B_2 =  - \sum_{i=1}^{N} \e_i |\sigma_i| + \sum_{i=1}^N O(\e_i^2) +A_2 - B_2 .
\end{align*}
The first term is precisely equal to 
$2|K| \delta_{\PP}(E)$ by (\ref{deficit}), while
 $\sum_{i} O(\e_i^2) \leq C\delta_{\PP}(E)$ by $(\ref{epsilon})$.
 Therefore, once we show that $|A_2|$ and $|B_2|$ are controlled linearly by the deficit, our proof is complete.\\

\noindent \textit{Step 2:} In this step we bound the error terms. We show that  $|A_2| \leq C \delta_{\PP}(E)$; the proof that $|B_2| \leq C \delta_{\PP}(E)$ is analogous.
The main idea for estimating the integral $A_2$
 is to show that the contribution of the adjacent sides is small, and then estimate the rest of integrand slice by slice. Recalling $A_2,$ the triangle inequality gives
\begin{equation} \label{a2} |A_2|\leq \bigg| \sum_{i \in I^-} \int_{h_i/H_i}^1 \frac{H_i}{r} (r|s_i| - |\sigma_i|) dr \bigg|
+  C\sum_{i \in I^-}\bigg| \int_{h_i/H_i}^1 \Big[H_i - \frac{h_i}{r}\Big] dr\bigg|.
\end{equation}
The second term in (\ref{a2}) corresponds to the contribution of adjacent sides. By $h_i = H_i +\e_i$,
\begin{align*}
&C \sum_{ i \in I^-}\bigg| \int_{h_i / H_i}^1 
\left[H_i - \frac{h_i}{r}\right] 
dr \bigg| =  C\sum_{i \in I^-}  \bigg| (H_i - h_i) + h_i \log\Big(\frac{h_i}{H_i}\Big)\bigg|  \\
=& C\sum_{i \in I^-}
 \Big| -\e_i + h_i \frac{\e_i}{H_i} + O(\e_i^2)\Big| =C \sum_{i \in I^-}
 \Big| \frac{\e_i^2 }{H_i} + O(\e_i^2) \Big| =C \sum_{I^-} O(\e_i^2) \leq C \delta_{\PP}(E).
\end{align*}
To bound the first term in \eqref{a2}, we will show that $\big| r|s_i| - |\sigma_i| \big| \leq C
 \max \{ |\e_{i-1}| \}$
for $r \in [h_i/H_i, 1]$, where the constant $C$ depends on $\PP$, and then obtain our bound by integrating.
To this end, we rotate our coordinates such that $\nu_i = e_2$, so the side $s_i$ has endpoints $(a, H_i)$ and $(b, H_i)$ for some $a<b$. We compute explicitly the endpoints of $\sigma_i$; it has, respectively, left and right endpoints
$$\Big( a + \tan\left(\theta_{i-1} - \pi/2 \right) \e_i - \frac{\e_{i-1}}{\sin(\theta_{i-1})} ,\ h_i \Big) \qquad \text{and} \qquad
\Big(b - \tan\left(\theta_i -\pi/2 \right)\e_i + \frac{\e_{i+1}}{\sin(\theta_{i})}, \  h_i \Big).$$
Thus 
$$|\sigma_i| = \Big|b
 - \tan \left(\theta_i -\pi/2 \right) \e_i
  + \frac{\e_{i+1}}{\sin(\theta_{i})} 
  -\Big(a + \tan\left(\sigma_{i-1} - \pi/2\right)
   \e_i 
   - \frac{\e_{i-1}}{\sin(\theta_{i-1})}\Big)\Big| .
   $$
   and so
   $$\|\sigma_i| - |b-a| | \leq C( |\e_i| +|\e_{i+1}| +|\e_{i-1}|),$$
   where $C$ depends on $\PP$.
Therefore, recalling that $|b-a| = |s_i|$,
\begin{align*}
 \Big| r|s_i| - |\sigma_i| \Big| 
& \leq (1-r)|s_i| +C( |\e_i| +|\e_{i+1}| +|\e_{i-1}|) \leq
 \frac{|\e_i|}{H_i} |s_i| + C \max \{ |\e_j |\}\leq C  \max \{ |\e_j |\}.
\end{align*}
Given this estimate on slices, we integrate over $r$:
\begin{align*}
\sum_{i \in I^-}& \int_{h_i / H_i}^1 \frac{H_i}{r} \left(r|s_i| - |\sigma_i|\right)dr \leq  C\max \{ |\e_j |\}  \sum_{i \in I^-} \int_{h_i / H_i}^1 \frac{H_i}{r} dr=C \max \{ |\e_j |\}\sum_{i \in I^-} H_i 
\Big| \log\Big(\frac{h_i}{H_i}\Big)\Big| \\
&= C  \max \{ |\e_j |\}\sum_{i \in I^-}  (\e_i + O(\e_i^2)) = O(\max\{ |\e_j|^2\}) \leq C(\PP) \delta_{\PP}(E),
\end{align*}
where the last inequality follows from \eqref{epsilon}.
\end{proof}
We prove Theorem~\ref{dim2} after introducing the following definition that we will need in the proof.
\begin{definition}\label{special}
A set $E$ is a \textit{volume constrained} $(\e, \eta_0)$-\textit{minimizer of $\PP$} if 
$$\PP(E) \leq \PP(F) + \e |E \Delta F|$$  for all $F$ such that $|E| = |F|$ and $(1-\eta_0)E \subset F\subset (1+\eta_0) E .$
\end{definition}
\begin{proof}[Proof of Theorem~\ref{dim2}]
By Proposition~\ref{poincare}, we need only to show that there exists some $C$ depending on $f$ such that 
\begin{equation}\label{betaonly2}\BPP(E)^2 \leq C\delta_{\PP}(E).
\end{equation}
 for all sets $E$ of finite perimeter with $0<|E|<\infty.$
Suppose for contradiction that  (\ref{betaonly2}) does not hold. There exists a sequence $\{E_j\}$ such that $|E_j| = |K|$, $\delta_{\PP}(E_j) \to 0$, and 
\begin{align}\label{contra2}
\PP(E_j) \leq \PP(K) + c_3 \BPP(E_j)^2
\end{align} 
for $c_3$ to be chosen at the end of this proof. By an argument identical to the one given in the proof of Theorem~\ref{Smooth}, we obtain a new sequence $\{F_j\}$ with $F_j \subset B_{R_0}$ for all $j$ such that the following properties hold:

\begin{itemize}
\item each $F_j $ is a minimizer of $Q_j(E) = \PP(E) + \frac{|K|\m}{8\M}| \BPP(E)^2 - \e_j^2| + \Lambda\big| |E| - |K|\big|$ among all sets $E \subset B_{R_0},$
where $\e_j  = \BPP(E_j)$;
\item
$F_j$ converges in $L^1$ to a translation of $K$;
\item
$|F_j | = |K| \text{ for $j$ sufficiently large}$;
\item the following lower bound holds  for $\BPP(F_j):$
\begin{equation}\label{epsbeta} \e_j^2 \leq 2 \BPP(F_j)^2.\end{equation}
\end{itemize}
Translate each $F_j$ such that $|F_j \Delta K | = {\inf}\{|F_j \Delta (K+y)|: y\in\mathbb{R}^2\}$.
We claim that  for all $\e >0$, there exists $\eta_0>0$ such that $F_j$ is a volume constrained $(\e, \eta_0)$-minimizer of $\PP$ (Definition~\ref{special}) for $j$ large enough. 
Indeed, fix $\e>0$ and let $\eta_1 = c_1 \e$, where $c_1 = c_1(f)$ will be chosen later. 
 By Lemma~\ref{3.1}, there exists $\eta_2 $ such that if $(1- \eta_2)K \subset E \subset (1+  \eta_2)K,$ 
 then $|y_{E}| <\eta_1$. Let $\eta_0 =\min \{ \eta_1, \eta_2\}/2$. 

By Lemma~\ref{UDE}, each $F_j$ satisfies uniform density estimates, and so Lemma~\ref{bound} implies that,  for $j$ large, $(1- \eta_0)K \subset F_j \subset (1+ \eta_0)K$ and thus $|y_{F_j}|<\eta_1$.
Let $E$ be such that $|E| = |F_j|$ and $(1-\eta_0)F_j \subset E \subset (1+\eta_0)F_j$. Then $|y_E| <\eta_1$ and
\begin{align*}
(1- \eta_1)K \subset F_j \subset (1+\eta_1)K, \qquad (1- \eta_1)K \subset &E \subset (1+  \eta_1)K.
\end{align*}
Because $F_j$ minimizes $Q_j$, 
$$\PP(F_j) + \frac{|K|\m}{4\M} |\BPP(F_j)^2 -\e_j^2| \leq \PP(E) + \frac{|K|\m}{4\M} |\BPP(E)^2 - \e_j^2|$$
and so by the triangle inequality and since $\m \leq \M$, 
$$\PP(F_j) \leq \PP(E) + \frac{|K|}{4} |\BPP(E)^2 - \BPP(F_j)^2| .$$
If $\PP(F_j) \leq \PP(E)$, then the volume constrained minimality condition holds trivially. Otherwise, with a bound as in \eqref{bbound}, we have
$$\PP(F_j) \leq \PP(E) + \frac{\PP(F_j) - \PP(E)}{2} + \frac{|\g_{\PP}(E) - \g_{\PP}(F_j)|}{2}  .$$
and so
$$\PP(F_j) \leq \PP(E) + |\g_{\PP}(E) - \g_{\PP}(F_j)| .$$

As in the proof of Theorem~\ref{Smooth}, the H\"{o}lder modulus of continuity for $\g_{\PP}$ shown in Proposition~\ref{convergence}(2) does not provide a sharp enough bound on the term $ |\g_{\PP}(E) - \g_{\PP}(F_j)| $; we must show that $\g_{\PP}$ is Lipschitz when the centers of $E$ and $F_j$ are bounded away from their symmetric difference. In this case, we must be more careful and show that the Lipschitz constant is small when $|E| = |F|$ and $E$ and $F_j$ are $L^{\infty}$ close.
If $\g_{\PP}(E) \geq \g_{\PP}(F_j)$, then using \eqref{gamma}, we have 
\begin{align*} 
\g_{\PP} (E ) - \g_{\PP}(F_j) & \leq \int_E \frac{dx}{f_*(x-y_E)} - \int_{F_j} \frac{dx}{f_*(x-y_E)}= \int_{E\setminus F_j} \frac{dx}{f_*(x-y_E)} - \int_{F_j\setminus E} \frac{dx}{f_*(x-y_E)}.
\end{align*}
One easily shows from the definition that for any $x, y \in \Rn$, 
$$f_*(x) - \frac{1}{m_{\PP}} |y| \leq f_*(x-y) \leq f_*(x) + \frac{1}{\m}|y|.$$
Therefore, since
$(1- \eta_1) K \subset E\Delta F_j \subset (1+ \eta_1) K$ and $|y_E| \leq \eta_1,$
$$
1- \eta_1 (1 +1/\m) \leq f_*(x-y_E) \leq 1+ \eta_1 (1 +1/\m)
$$
 for $x \in E\Delta F_j$, implying that
\begin{align*} 
\g_{\PP}(E) - \g_{\PP}(F_j)
&  \leq \int_{E\setminus F_j} \frac{dx}{1-\eta_1 (1+ 1/\m)} 
- \int_{F_j\setminus E}\frac{dx}{1+\eta_1(1+1/\m )} 
 \leq C \eta_1 |E\Delta F_j|.
\end{align*}
where $C= 1+ 1/\m$.
The analogous argument holds if $\g_{\PP}(E) \leq \g_{\PP}(F_j),$
and so
$$\PP(F_j) \leq \PP(E) +C \eta_1 |E\Delta F_j|.$$
Letting $c_1 =1/C$, we conclude that $F_j$ is a volume constrained $(\e, \eta_0)$-minimizer of surface energy, and 
for $j$ large enough, $(1- \eta_0/2)K \subset F_j \subset (1+ \eta_0/2)K$ by Lemma~\ref{bound}.
Therefore, Theorem~\ref{figmag} below implies that,  for $j$ sufficiently large, $F_j$ is a convex polygon with 
$\nu_{F_j}(x) \in \{v_i\}_{i=1}^N$ for $\mathcal{H}^{1}\text{-a.e. } x \in \partial F_j.$  Moreover, for any $\eta,$ $(1- \eta)K \subset F_j \subset (1+ \eta)K$ for $j$ large enough, so actually $\{v_{F_j}\} = \{v_i\}_{i=1}^N$ for $j$ sufficiently large. In other words, for $j$ large enough, $F_j$ is parallel to $K$, so Proposition~\ref{crystalprop} implies that
\begin{equation}
\BPP(F_j)^2 \leq C_1 \delta_{\PP}(F_j),
\end{equation}
where $C_1$ depends on $f$.
On the other hand, $F_j$ minimizes $Q_j$, so 
comparing against $E_j$ and using (\ref{contra2}) and (\ref{epsbeta}) implies
\begin{align*}
\PP(F_j) & \leq \PP(E_j) 
\leq \PP(K)  + c_3 \e_j^2 \leq  \PP(K) +2c_3\BPP(F_j)^{2}.
 \end{align*}
By \eqref{epsbeta}, $\BPP(F_j)>0$, so
choosing $c_3$ small enough such that $c_3 < |K|/C_1$, we reach a contradiction.
\end{proof}

\begin{theorem}\cite[Theorem 7]{figallimaggi11}\label{figmag}
Let $n=2$ and let $f$ be a crystalline surface tension. There exists a constant $\e_0$
 such that if, for some $\eta>0$ and some $0<\e<\e_0$ , $(1-\eta/2)K \subset E \subset (1+\eta/2)K$ and $E$ is a volume constrained $(\e, \eta)$-minimizer, then $E$ is a convex polygon with 
$$\nu_E(x) \in \{v_i\}_{i=1}^N \qquad \text{for }\mathcal{H}^{1}\text{-a.e. } x \in \partial E.$$
\end{theorem}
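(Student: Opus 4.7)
The plan is to exploit the piecewise-linear structure of a crystalline surface tension to force $\nu_E \in \{v_i\}$ almost everywhere, via a ``staircase replacement'' competitor that has zero $\PP$-cost relative to $E$ but strictly gains volume, and thus strictly improves $\PP$ after a volume-restoring rescaling. I will proceed in three steps: (i) reduce to the case $E$ convex, (ii) carry out the staircase comparison to rule out anomalous normals, and (iii) read off the polygonal structure.

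For step (i), I would use the $L^\infty$ sandwich $(1-\eta/2)K \subset E \subset (1+\eta/2)K$ together with the fact that in two dimensions $\PP$ is nonincreasing under convexification, so the convex hull $E^{\rm c}$ satisfies $\PP(E^{\rm c}) \leq \PP(E)$ and $|E^{\rm c}| \geq |E|$. Rescaling $E^{\rm c}$ by $r \leq 1$ to restore $|E|$ and then applying the Wulff inequality yields $\PP(rE^{\rm c}) \leq \PP(E) - c\,|E^{\rm c}\setminus E|$ with $c = c(f) > 0$. Since $rE^{\rm c}$ is admissible in the $(\e,\eta_0)$-minimality inequality for $\e$ small and $\eta$ small, this forces $|E^{\rm c}\setminus E| = 0$, i.e.\ $E$ is convex.

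For step (ii), assume $E$ is convex and there is $\Gamma \subset \partial E$ with $\mathcal{H}^1(\Gamma) > 0$ on which $\nu_E$ lies in an open arc $(v_i, v_{i+1})$ strictly between two consecutive normals of $K$. On this arc $f$ is \emph{linear}, coinciding with $\nu \mapsto x_j \cdot \nu$, where $x_j$ is the shared vertex of the two edges of $K$ with normals $v_i$ and $v_{i+1}$. By the divergence theorem applied to the constant vector field $x_j$, the $\PP$-cost of any path joining two fixed points $p,q$ with all normals in the closed arc $[v_i,v_{i+1}]$ is the fixed quantity $x_j \cdot R_{\pi/2}(q-p)$. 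Hence replacing $\Gamma$ by the unique two-segment staircase with normals $v_i, v_{i+1}$ and endpoints $p,q$ yields a set $E'$ with $\PP(E') = \PP(E)$. Since $E$ is convex and the arc $\Gamma$ bows strictly inward of the staircase corner, $|E'| > |E|$, with $|E' \setminus E| \geq c_1(f)\,(\mathcal{H}^1(\Gamma_0))^2$ for a subarc $\Gamma_0 \subset \Gamma$ whose normal stays in a compact subset of $(v_i, v_{i+1})$. A volume-restoring rescaling of $E'$ by $r < 1$ then produces $E''$ with $\PP(E) - \PP(E'') \geq c_0(f)\,(\mathcal{H}^1(\Gamma_0))^2$.

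The main obstacle will be the bookkeeping, because both the $\PP$-gain and the symmetric difference $|E \Delta E''|$ are of order $(\mathcal{H}^1(\Gamma_0))^2$: one must establish an upper bound $|E \Delta E''| \leq C_1(f)\,(\mathcal{H}^1(\Gamma_0))^2$ with an explicit constant so that taking $\e_0 < c_0/C_1$ contradicts the $(\e,\eta_0)$-minimality of $E$. A localization argument is also needed to isolate a subarc $\Gamma_0$ with normal bounded away from $v_i$ and $v_{i+1}$, so that the staircase construction does not degenerate. Once this rules out $\mathcal{H}^1(\Gamma) > 0$, the conclusion $\nu_E \in \{v_i\}$ $\mathcal{H}^1$-a.e., combined with convexity and the monotonicity of the outward normal along $\partial E$, forces $\partial E$ to be a union of at most $N$ line segments, so $E$ is a convex polygon with $\nu_E \in \{v_i\}$ a.e., as desired.
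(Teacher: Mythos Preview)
The paper does not prove this theorem; it is quoted as a black box from \cite[Theorem~7]{figallimaggi11} and invoked in the proof of Theorem~\ref{dim2}. The Remark immediately following the statement is the only content the paper adds, and it explains merely why the weaker minimality notion of Definition~\ref{special} suffices under the added sandwich hypothesis $(1-\eta/2)K\subset E\subset(1+\eta/2)K$: namely, all competitors used in the Figalli--Maggi argument then automatically lie between $(1-\eta)E$ and $(1+\eta)E$. So there is no proof in this paper to compare your proposal against.

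Your sketch is nonetheless a correct outline of how such a rigidity result is proved, and it matches the flavor of competitor-based arguments the Remark alludes to. Two small corrections. In step~(i), the Wulff inequality is not what gives $\PP(rE^{\rm c})\le\PP(E)-c\,|E^{\rm c}\setminus E|$; rather one has $\PP(rE^{\rm c})=r\,\PP(E^{\rm c})\le r\,\PP(E)$ directly from convexification in the plane, and then $1-r\ge c'\,|E^{\rm c}\setminus E|$ from $r^2|E^{\rm c}|=|E|$, with the Wulff inequality only entering to bound $\PP(E)$ from below. In step~(ii) your bookkeeping worry largely dissolves once you observe that both quantities are proportional to the \emph{same} area $A$ enclosed between the arc and the staircase: the $\PP$-gain from rescaling is $(1-r)\PP(E)\approx \tfrac{A}{2|E|}\PP(E)$, while $|E\Delta E''|\le|E\Delta E'|+|E'\Delta rE'|\le A+C(1-r)\le C'A$, so their ratio is bounded below by a constant depending only on $f$, independent of the chosen subarc. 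The localization to a short subarc $\Gamma_0$ is still needed, but only to guarantee that the staircase competitor remains inside $(1+\eta)E$, not to control the ratio of constants.
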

\begin{remark}\rm{In \cite[Theorem 7]{figallimaggi11}, Figalli and Maggi assume that $E$ is a volume constrained $(\e, 3)$-minimizer (and actually, their notion of $(\e, 3)$-minimality is slightly stronger than ours). However, by adding the additional assumption that $(1-\eta/2) K \subset E \subset (1+\eta/2)K$, it suffices to take $E$ to be a volume constrained-$(\e, \eta)$ minimizer (with the definition given here) with $\eta$ as small as needed. Indeed, if $(1-\eta/2) K \subset E \subset (1+\eta/2)K$, then $(1-\eta)E \subset$ co$(E) \subset  (1+\eta)E$ where  co$(E)$ is the convex hull of $E$. Then, in the proof of \cite[Theorem 7]{figallimaggi11}, the only sets $F$ used as comparison sets are such that $|E| = |F|$ and $(1-\eta)E\subset F\subset (1+\eta)E$. }
\end{remark}

\section{Another form of the oscillation index}\label{other}
The oscillation index $\BPP(E)$ is the natural way to quantify the oscillation
 of the boundary of a set $E$ relative to the Wulff shape $K$ for a given surface energy $\PP$,
  as it admits the stability inequality \eqref{statement1} 
  with a power that is independent of $f$. 
One may wonder if it would be suitable to quantify the oscillation of $E$ by looking at the Euclidean distance between normal vectors of $E$ and corresponding normal vectors of $K$. 
While such a quantity may be useful in some settings, in this section we show that it does not admit a stability result with a power independent of $f$. This section examines the term $\BPP^*(E)$ defined in \eqref{betastar} and gives two examples showing a failure of stability. We then give a relation between $\BPP $ and $\BPP^*$ for $\g$-$\lambda$ convex surface tensions. As a consequence of Theorem~\ref{th1}, this implies a stability result for $\BPP^*$, though, as the examples show, there is a necessary dependence on the $\g$-$\lambda$ convexity of $f$.

The following example illustrates that there does not exist a power $\sigma$ such that 
\begin{equation}\label{noway}
\beta^*_{\PP}(E)^{\sigma} \leq C(n,f) \delta_{\PP}(E)
\end{equation}
for all sets $E$ of finite perimeter with $0<|E|<\infty$ and for all surface energies $\PP$.
\begin{example}\label{crystalexample} \rm{
In dimension $n=2$, we construct a sequence of Wulff shapes $K_{\theta}$
 (equivalently, a sequence of surface energies $\PP_{\theta}$) and a sequence of sets $E_{\theta}$ such that $\delta_{{\theta}} (E_{\theta}) \to0$ but $\beta^*_{{\theta}}(E_{\theta}) \to \infty$  as $\theta \to 0$. We use the notation $ \delta_{\theta}=\delta_{\PP_{\theta}}$ and $ \beta^*_{\theta}=\beta^*_{\PP_{\theta}}$.

We let $K_{\theta}$ be a unit area rhombus where one pair of opposing vertices has angle $\theta<\frac{\pi}{4} $ and the other has angle $\frac{\pi}{2} - \theta$. The length of each side of $K_{\theta}$ is proportional to $\theta^{-1/2}$. 
 Let $L = \theta^{-1/4}$.
We then construct the sets $E_{\theta}$ by cutting away a triangle with a zigzag base and with height $L$ from both corners of $K_{\theta}$ with vertex of angle $\theta$ (see Figure $3$). We choose the zigzag so that each edge in the zigzag is parallel to one of the adjacent edges of $K_{\theta}$. By taking each segment in the zigzag to be as small as we wish, we may make the area of each of the two zigzag triangles arbitrarily close to the area of the triangle with a straight base, which is 
$$A = L^2\tan(\theta/2) = \theta^{-1/2}\tan(\theta/2)\approx \theta^{1/2},$$
as this triangle has base $2L \tan(\theta/2).$ Both of the other two sides of the triangle have length $m= L/\cos(\theta/2).$
\begin{figure}[t]
\begin{center}
\includegraphics[scale=0.44]{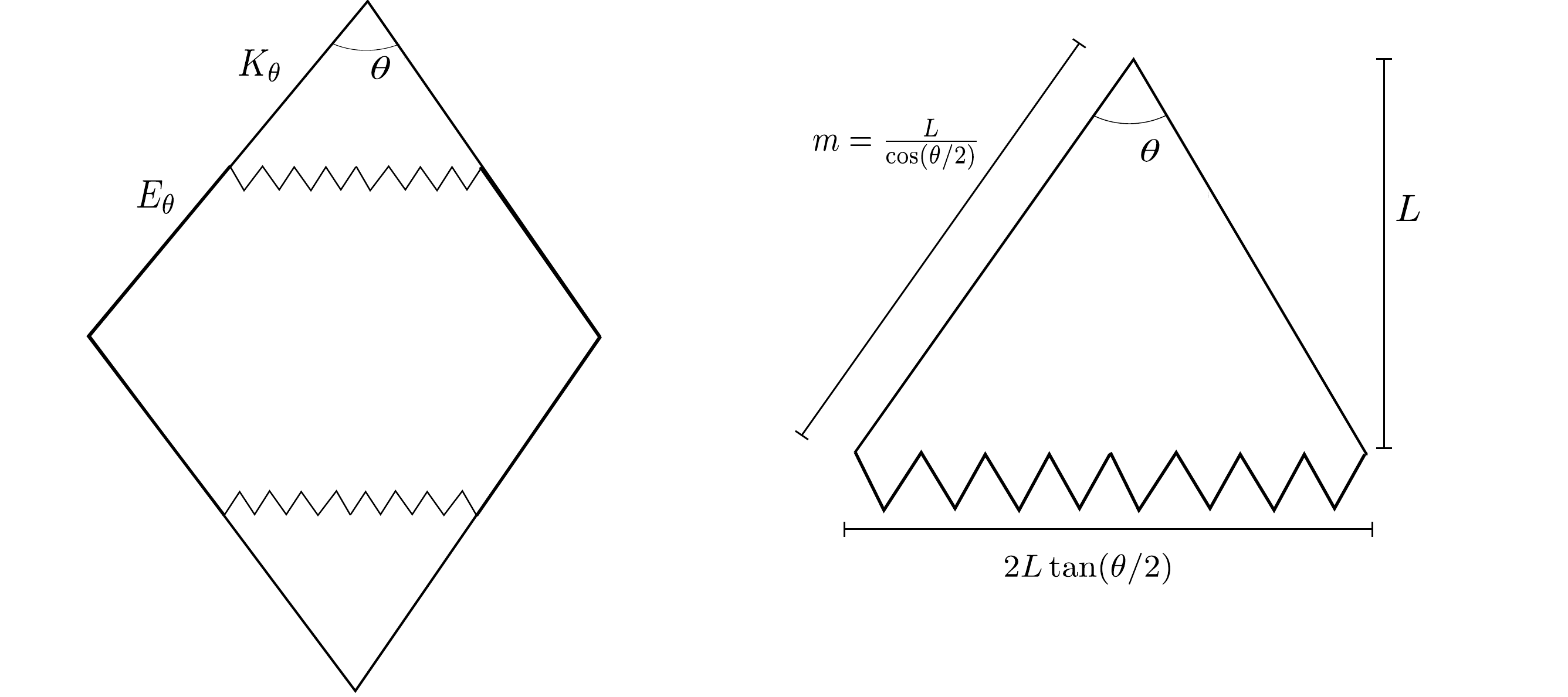}\caption{\small{The sets $E_{\theta}$ are formed by cutting away a zigzag triangle from the top and bottom of $K_{\theta}$  have $\delta_{\theta}(E_{\theta}) \to 0$ but $\beta^*_{{\theta}}(E_{\theta}) \to \infty$  as $\theta \to 0$.}}
\end{center}
\end{figure} 

Let us now compute the deficit $\delta_{\theta}$ and the Euclidean oscillation index $\beta_{{\theta}}^*$ of $E_{\theta}$. By construction, $\PP_{\theta}(E_{\theta}) = \PP_{\theta}(K_{\theta}) = 2$, and therefore
$$\delta_{{\theta}}(E_{\theta}) = \frac{2}{2(1- A)^{1/2}}  - 1  = \frac{1}{(1- A)^{1/2}} -1 = \theta^{1/2} + o(\theta^{1/2}).$$
To compute $\beta^*_{\theta} (E_{\theta})^2$, we cannot characterize the point $y$ for which the minimum in \eqref{betastar} is attained in general. However, something may be said for an $n$-symmetric set, i.e., a set $E$ that for which there exist $n$ orthogonal hyperplanes such that $E$ is invariant under reflection with respect to each of them. The intersection of these orthogonal hyperplanes is called the \textit{center of symmetry of $E$}. Indeed, a slight variation in the proof of \cite[Lemma $5.2$]{maggi2008some} shows that
\begin{equation}\label{3beta}3\BPP^*(E) \geq \bigg( \frac{1}{n|K|^{1/n}|E|^{1/n'} }\int_{\partial^* E} 1-\nu_E(x) \cdot \nu_K\Big(\frac{x-z}{f_*(x-z)}\Big)\, d\Hn(x) \bigg)^{1/2}.
\end{equation}
 where $z$ is the center of symmetry of $E$. 
By construction, $E_{\theta}$ is a $2$-symmetric set with center of symmetry $0$, so
\begin{align*}9\beta^*_{{\theta}} (E_{\theta})^2 
&\geq
 \frac{1}{2(1-A)^{1/2}} \int_{Z} 1 -\nu_{E_{\theta}} (x) \cdot \nu_{K_{\theta}}\Big(\frac{x}{f_*(x)}\Big)\, d \mathcal{H}^{1} \geq 
  \frac{1}{2} \int_{Z} 1 -\nu_{E_{\theta}} (x) \cdot \nu_{K_{\theta}}\Big(\frac{x}{f_*(x)}\Big)\, d \mathcal{H}^{1},
 \end{align*}
where $Z$ denotes the union of the two zigzags. By construction, $\mathcal{H}^1(Z)$ is exactly equal to $\mathcal{H}^{1}(\partial K_{\theta} \setminus \partial E_{\theta}) = 4m$. Moreover, because the edges of $E_{\theta}$ are parallel to those of $K_{\theta}$, we find that 
$$1 -\nu_{E_{\theta}} (x) \cdot \nu_{K_{\theta}}\Big(\frac{x}{f_*(x)}\Big) = \begin{cases} 0 & x \in Z_1\\
1-\cos(\pi - \theta) & x \in Z_2
\end{cases}
$$
where $Z_1$ is the set of $x\in Z$ where $\nu_{E_{\theta}}(x)$ is equal to 
$\nu_{K_{\theta}}(\frac{x}{f_*(x)})$ and $Z_2$ is the set of $x \in Z$ 
where $\nu_{E_{\theta}}(x)$ is equal to the normal vector to the other side of $K_{\theta}.$ Moreover, we have constructed $E_{\theta}$ so that $\mathcal{H}^1(Z_1) = \mathcal{H}^1(Z_2) = 2 m.$ Thus, as $\theta<\frac{\pi}{4},$
  \begin{align*} 
\beta^*_{{\theta}} (E_{\theta})^2 & \geq   \frac{1}{2}  \int_{Z_2} 1 - \cos (\pi - \theta) \ d \mathcal{H}^{1}  \geq \frac{\mathcal{H}^1(Z_2)}{2} = m =1/(\theta^{1/4}\cos(\theta/2))\to \infty
\end{align*}
as $\theta \to 0$.
Therefore, for any exponent $\sigma$, the inequality (\ref{noway}) fails to hold; we may choose $\theta$ sufficiently small such that $E_{\theta}$ is a counterexample.}
\end{example}
 
The next example shows that even if we restrict our attention to surface energies that are $\g$-$\lambda$ convex (Definition~\ref{gammalambda}), an inequality of the form in (\ref{noway}) cannot hold with an exponent smaller than $\sigma = 4.$
The example is presented in dimension $n=2$ for convenience, though the analogous example in higher dimension also holds.
\begin{example}\label{pexample} \rm{ Fix $p>2$ 
and define the  surface tension $f_p(x) = \left( |x_1|^p+ |x_2|^p \right)^{1/p}$ to be the $\ell^p$ norm in $\mathbb{R}^2$. We show below that $f_p$ is a $\g$-$\lambda$ convex surface tension. H\"{o}lder's inequality ensures that the support function $f_*$ is given by $f_q$, in the notation above, where $q$ is the H\"{o}lder conjugate of $p$. The Wulff shape $K= \{f_q(x) <1\}$ is therefore the $\ell^q$ unit ball. We let $\PP_p$ denote the surface energy corresponding to the surface tension $f_p$.

We build a sequence of sets $\{E_{r}\}$ depending on $p$ such that, for any $\sigma<4$, we may choose $p$ large enough so that $\delta_{p}(E_{r})/\beta^*_{p}(E_{r})^{\sigma}\to 0$ as $r \to 0$.
Here we use the notation $\beta^*_{p}(E) = \beta^*_{\PP_p}(E) $ and $\delta_p(E) = \delta_{\PP_p}(E).$
We may locally parameterize $K$ near $(0,1)$ as the subgraph of the function 
$v_q(x_1) = \left(1 - |x_1|^q\right)^{1/q}.$ 
Thus  
$v_q'(x_1) =  -|x_1|^{q-2}x_1/{(1- |x_1|^q )^{1/p}}$ and
\begin{equation}\label{nuKK}
\nu_K((x_1, \ v_q(x_1))) = \frac{\Big(\frac{|x_1|^{q-2}x_1}{(1- |x_1|^q )^{1/p}}, \ 1\Big) }{\sqrt{1+
\frac{|x_1|^{2q-2}}{(1-|x_1|^q )^{2/p}} }} =\frac{(|x_1|^{q-2}x_1 + O(|x_1|^{2q-1}) , \ 1) }{\sqrt{1+|x_1|^{2q-2} + O(|x_1|^{3q-2} ) }}
\end{equation}
The sets $E_r$ are formed by replacing the top and bottom of $K$ with cones. More precisely, let $\textbf{C}_r = (-r,r) \times \mathbb{R}$. We form $E_r$ by replacing $\partial K \cap \textbf{C}_r$ with the graphs of the functions $w$ and $-w$, where $w_1: (-r, r) \to \mathbb{R}$ is defined by $w (x_1)  =  -r^{q-1} |x_1|/{(1-r^q)^{1/p}} + C_0,$ with $C_0= (1-r^q)^{1/q } + r^q/{(1-r^q)^{1/p}}.$  The constant $C_0$ is chosen so that $w(r) = v_q(r)$ and $w(-r) = v_q(-r)$.
\begin{figure}[t]
\begin{center}
\includegraphics[scale=0.45]{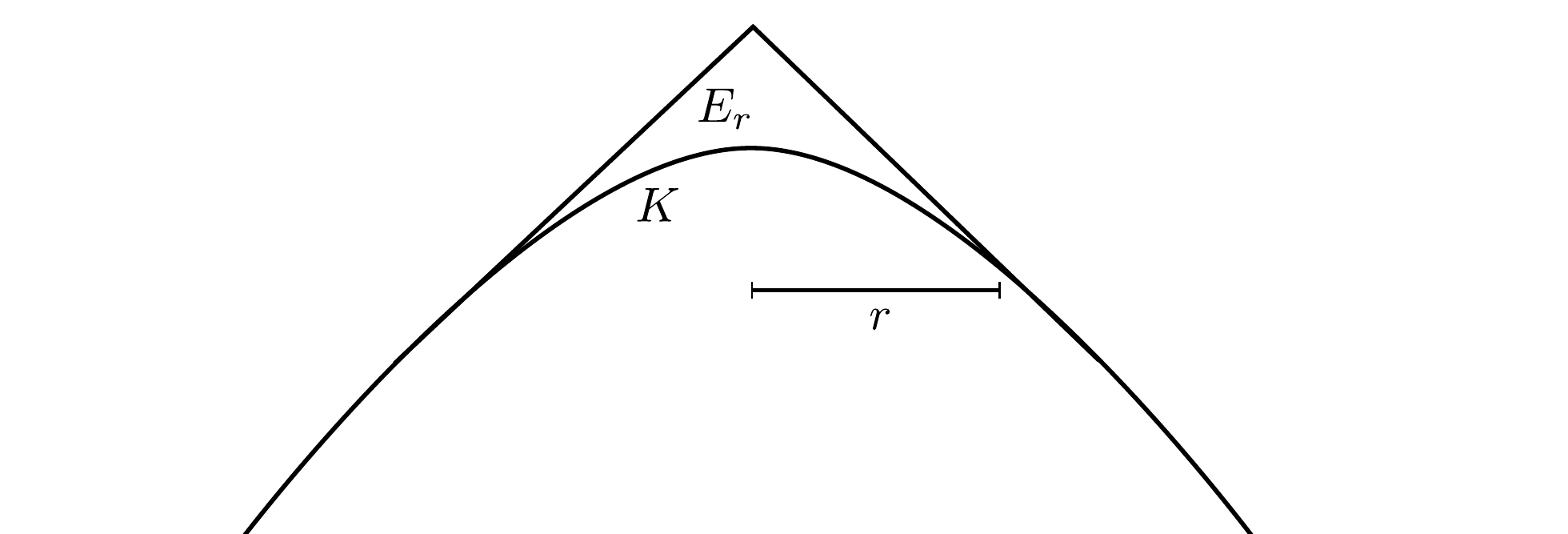}\caption{\small{The sets $E_{r}$ formed by replacing the top and bottom of the $\ell^q$ unit ball with a cone show that \eqref{noway} cannot hold for $\sigma<4$.}}
\end{center}
\end{figure}
For $x_1 \in (-r, r)$ for $r<1$, we have
$w'(x_1) = - r^{q-1}\text{sgn}(x_1)/(1-r^q)^{1/p}$ and 
\begin{equation}\label{nuCC}
\nu_E\left((x_1, \ w(x_1))\right) = \frac{ \Big( \text{sgn}(x_1)  \frac{ r^{q-1}}{(1-r^q)^{1/p}} ,\ 1  \Big) }{\sqrt{ 1 + 
\frac{ r^{2q-2}}{(1-r^q)^{2/p}}}} 
=\frac{( \text{sgn}(x_1)  r^{q-1} + O(r^{2q-1}), \ 1)  }{\sqrt{ 1 + r^{2q-2} +O(r^{3q-2})}}.
\end{equation}
Now, $\PP_p(E_r ) = \PP_p(K) + \PP_p(E_r; \textbf{C}_r) - \PP_p(K; \textbf{C}_r)$, so 
 \begin{align*}
\PP_p(E_r) - \PP_p(K)&
= \int_{-r}^r \Big( \frac{r^q}{1-r^q} +1\Big)^{1/p } - \Big( \frac{|x_1|^q}{1-|x_1|^q} +1\Big)^{1/p } dx_1\\
&=\frac{1}{p} \int_{-r}^r r^q - |x_1|^q + O(r^{2q})  \ dx_1 =Cr^{q+1} + o(r^{q+1} ).
 \end{align*}
The graph of $w$ lies above the graph of $v_q$ for all $|x_1|<r$, so $|E_r| >|K|.$  This implies that $$\delta_{p}(E_r) \leq \frac{\PP_p(E_r) - \PP_p(K)}{2|K|} =Cr^{q+1} + o(r^{q+1} ).$$

Next we compute $\beta_{p}^*(E_r)$ in several steps.
 As in Example~\ref{crystalexample}, $E_r$ is a $2$-symmetric set with center of symmetry $0$, thus it is enough to compute the right hand side of \eqref{3beta}. 
 First, the Taylor expansions in \eqref{nuKK} and \eqref{nuCC} imply that, for $x\in {\bf{C}}_r \cap \partial^* E$, $ \nu_E(x) \cdot \nu_K\big( \frac{x}{f_*(x)} \big)$ is given by
\begin{align*}
&\frac{(|x_1|^{q-2}x_1 + O(|x_1|^{2q-1}) , \ 1) }{\sqrt{1+|x_1|^{2q-2}+ O(|x_1|^{3q-2}) }}\cdot \frac{( \text{sgn}(x_1)  r^{q-1} + O(r^{2q-1}), \ 1)}{\sqrt{ 1 + 
 r^{2q-2} + O(r^{3q-2)}}} = \frac{ 1 + |x_1|^{q-1}r^{q-1} + O(r^{3q-2} )}{\sqrt{(1+|x_1|^{2q-2}+ r^{2q-2} + O( r^{4q-4}))}}\\
 &=  1 + |x_1|^{q-1}r^{q-1} -\frac{1}{2} (|x_1|^{2q-2}+ r^{2q-2} ) + O( r^{3q-2})=1 -\frac{1}{2}( |x_1|^{q-1}-r^{q-1})^2+O( r^{3q-2} ).
\end{align*}
For $x \in \partial^* E \setminus {\bf{C}}_r$,  $ \nu_E(x) \cdot \nu_K\big( \frac{x}{f_*(x)} \big) = 0$.
Hence, 
\begin{align*}
\bigg( \frac{1}{2} \int_{\partial^*E}& \left|\nu_E(x) - \nu_K\Big(\frac{x}{f_*(x)}\Big)\right|^2\, d\mathcal{H}^{1}\bigg)^{1/2} =\bigg( \int_{\partial^*E \cap {\bf{C}}_r} 1 - \nu_E \cdot \nu_K\big(\frac{x}{f_*(x)} \big)\,d\mathcal{H}^{1}\bigg)^{1/2}\\
&  =\bigg( \int_{-r}^r \frac{1}{2} ( |x_1|^{q-1}-r^{q-1})^2 \sqrt{ 1 + r^{2q-2} +O(r^{3q-1})} +  O(r^{3q-2} ) 
\, dx_1\bigg)^{1/2} \\
&=\bigg( \int_{-r}^r \frac{1}{2} ( |x_1|^{q-1}-r^{q-1})^2  +  O(r^{3q-2} ) 
\, dx_1\bigg)^{1/2} 
=Cr^{q-1/2} + o(r^{q-1/2}).
\end{align*}
Furthermore, $|E| = |K| + o(1)$,
so $\sqrt{2}|K|^{-1/4} |E|^{-1/4}= \sqrt{2}|K|^{-1/2} +o(1)$, and so
\begin{align*}\beta_p^*(E_r)&
 = \frac{1}{\sqrt{2}|K|^{1/4} |E|^{1/4}}
  \bigg( \frac{1}{2} \int_{\partial^* E} \Big|\nu_E (x) - \nu_K\Big(\frac{x}{f_*(x)} \Big)\Big|^2 d \mathcal{H}^1\bigg)^{1/2} =C r^{q -1/2} + o(r^{q -1/2}).
\end{align*}
Therefore,
$$
\frac{\delta_{p}(E_r) }{\beta_p^*(E_r)^{\sigma} } \approx \frac{r^{(q+1)}}{r^{\sigma(q - 1/2)}} = r^{ q+1 - \sigma q +\sigma/2}.$$ 
This quantity goes to $0$ as $r$ goes to zero if and only if 
$q+1 -\sigma q+ \sigma/2 >0,$
or, equivalently, if and only if $\frac{2 +\sigma}{2(\sigma-1)}>q$.
For any $\sigma<4$ we may find $1<q <\frac{2 + \sigma}{2(\sigma-1)}.$ Therefore, for any $\sigma<4,$ there exists a $\gamma$-$\lambda$ convex surface tension $f$ such that a bound of the form $\delta_{\PP}(E) \geq C\BPP^*(E)^{\sigma}$ fails.}
\end{example}

When $f$ is $\g$-$\lambda$ convex (recall Definition~\ref{gammalambda}), we can control $\beta^*_{\PP}(E)$ by $\BPP(E)$. As one expects after the previous example, the exponent in this bound depends on the $\g$-$\lambda$ convexity of $\PP$. Indeed, this is the content of Theorem~\ref{th2}. First, we show that the $\ell_p$ norms $f_p$ as defined in the previous example are $\g$-$\lambda$ convex for each $p\in(1, \infty)$.
In the case where $1<p\leq 2$, $f_p$ is actually \textit{uniformly} convex in tangential directions, so it is $\g$-$\lambda$ convex with $\g =0.$ 
Indeed, $f_p(\nu+\tau) = f_p(\nu) + \nabla f_p(\nu)\tau + \frac{1}{2}\int_0^1 \nabla^2 f_p(\nu + s\tau)[\tau , \tau] ds,$ and thus
$$f_p(\nu+\tau) +f_p(\nu-\tau) - 2 f_p(\nu)  = \frac{1}{2} \int_{-1}^{1} \nabla^2f_p(\nu+ s\tau) [\tau, \tau] ds.$$
We can bound the integrand from below pointwise. 
We compute
$$
\partial_{ii} f_p(\nu ) = (p-1) \Big( \frac{ |\nu_i|^{p-2}}{f_p(\nu)^{p-1}} - \frac{ |\nu_i|^{2p-2}}{f_p(\nu)^{2p-1}} \Big) , \qquad
\partial_{ij}f_p(\nu)  =(1-p) \frac{| \nu_i|^{p-2}\nu_i |\nu_j|^{p-2} \nu_j}{ f_p(\nu)^{2p-1}}.$$
Therefore, if $f_p(\nu) = 1,$ then 
$$\nabla^2 f_p(\nu) = (p-1) \sum_{i=1}^{n} |\nu_i|^{p-2} e_i \otimes e_i - (p-1) \sum_{i,j=1}^n  
| \nu_i|^{p-2}\nu_i |\nu_j|^{p-2} \nu_j e_i \otimes e_j$$
and so
$$\nabla^2f_p(\nu)[\tau, \tau] =
 (p-1)\sum_{i=1}^{n} |\nu_i|^{p-2} \tau_i^2 - (p-1) \Big(\sum_{i=1}^n  
| \nu_i|^{p-2}\nu_i \tau_i \Big)^2 .$$
It is enough to consider $\tau$ such that $\tau$ is tangent to $K_p=\{f_p<1\}$ at $\nu$, as $f_p$ is positive $1$-homogeneous and the span of $\nu$ and $T_{\nu}K_p$ is all of $\Rn$.
Observe that $\nabla f_p(\nu) = \sum_{i=1}^n |\nu_i|^{p-2} \nu_i e_i$;
 this is verified by the fact that the support function of $f_p$ is $f_q$, and that
  $\nabla f_p(\nu) = \frac{x}{f_q(x)} $ such that $\frac{x}{f_q(x)} \cdot \nu = f_p(\nu) = 1.$ Thus $\tau$ is tangent to $K_p$ at $\nu$  if and only if
$\tau \cdot \nabla f_p(\nu) = \sum_{i=1}^n  
| \nu_i|^{p-2}\nu_i \tau_i = 0.$ Therefore, for such $\tau$, 
$$\nabla^2f_p(\nu)[\tau, \tau] =
 (p-1)  \sum_{i=1}^{n} |\nu_i|^{p-2} \tau_i^2 \geq (p-1) |\tau|^2.$$
In the case where $p\geq 2$, we use Clarkson's inequality, which states that for $p \geq 2$,
$$f_p\Big(\frac{x+y}{2}\Big)^p +f_p\Big(\frac{x-y}{2}\Big)^p \leq \frac{f_p(x)^p}{2}  +\frac{ f_p(y)^p}{2}.$$
For $\nu$ such that $f_p(\nu) =1$
 and $\tau $ tangent to $K_p $ at $\nu$ with $f_p(\tau)=1,$
Clarkson's inequality with $x = \nu + \e \tau$ and $y = \nu - \e \tau$ implies
$$2\e^p \leq f_p(\nu + \e \tau)^p + f_p(\nu - \e \tau) - 2.$$
This is almost the condition we need, except we  have $f_p^p$ instead of $f_p$ for the terms on the right hand side. 
Note that both $f_p(\nu + \e \tau)$ and $f_p(\nu - \e \tau)$ are greater than $1$, as moving in the tangent direction to $K_p= \{f_p <1\} $  increases $f_p$.
The function $z^p$ is convex with derivative $pz^{p-1}$, so $z^p \leq 2^{p-1}pz + (2^{p-1}p-1)$ for all $z\in [1,2]$. Applying this to $z_1 = f_p(\nu+\e \tau)$ and $z_2 = f_p(\nu-\e \tau)$ yields
$$2\e^p \leq  2^{p-1}p f_p(\nu+\e \tau)^p +2^{p-1}pf_p(\nu - \e \tau)^p -2(2^{p-1}p).$$
Thus $f_p$ is $\g$-$\lambda$ convex with $\g = p-2$ and $\lambda =1/(2^{p-2}p).$

The following lemma about $\g$-$\lambda$ convexity condition will be used in the proof of Theorem~\ref{th2}.

\begin{lemma}\label{lem5} Assume that $f$ is $\g$-$\lambda$ convex.
 Then for all $\nu, \tau \in \Rn$ such that $\nu \neq 0$, 
\begin{equation}\label{frombelow}f(\nu + \tau ) \geq \frac{\lambda}{2^{2+\g}|\nu|} \Big|\tau -\Big(\tau \cdot\frac{\nu}{|\nu|}\Big) \frac{\nu}{|\nu|}\Big|^{2+\g}
+f(\nu) +\nabla f(\nu)\cdot \tau,
\end{equation}
\end{lemma}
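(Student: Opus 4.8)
The plan is to reduce \eqref{frombelow} to the defining inequality \eqref{gamma lambda} of $\g$-$\lambda$ convexity by a one-dimensional convexity argument along the segment from $\nu$ to $\nu+\tau$. First I would dispose of the component of $\tau$ parallel to $\nu$: write $\tau = \tau_\perp + c\,\nu/|\nu|$ where $\tau_\perp = \tau - (\tau\cdot\nu/|\nu|)\,\nu/|\nu|$ and $c = \tau\cdot\nu/|\nu|$. Since the right-hand side of \eqref{frombelow} only involves $\tau$ through $\tau_\perp$ (and through the linear term $\nabla f(\nu)\cdot\tau$, which is linear and hence handled exactly), the role of the parallel part is benign; the genuine content is the estimate in the direction $\tau_\perp$. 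So it suffices to prove the inequality, and I would organize the argument so that the key convexity step is applied to $\tau_\perp$.

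The main step is the following: for a convex positively $1$-homogeneous $f$, the function $\phi(t) = f(\nu + t\tau_\perp)$ is convex in $t$, and by $1$-homogeneity one checks that the second difference $f(\nu+t\tau_\perp) + f(\nu-t\tau_\perp) - 2f(\nu)$ is, after rescaling, controlled below using \eqref{gamma lambda}: applying \eqref{gamma lambda} with $\nu$ fixed and the perturbation $t\tau_\perp$ gives
\[
f(\nu + t\tau_\perp) + f(\nu - t\tau_\perp) - 2f(\nu) \geq \frac{\lambda}{|\nu|}\,|t\tau_\perp|^{2+\g} = \frac{\lambda}{|\nu|}\,|t|^{2+\g}|\tau_\perp|^{2+\g}.
\]
Now I would use the standard fact that for a convex function $\phi$ on $[-1,1]$ with $\phi(0) = f(\nu)$, $\phi'(0) = \nabla f(\nu)\cdot\tau_\perp$ (which exists because $f$ is differentiable away from the origin once it is, e.g., $\g$-$\lambda$ convex — or, more carefully, one works with one-sided derivatives, which is enough), one has the pointwise lower bound
\[
\phi(1) - \phi(0) - \phi'(0) \geq \tfrac{1}{2}\big(\phi(1) + \phi(-1) - 2\phi(0)\big) - \tfrac12\big(\phi'(0) + \phi(-1) - \phi(0)\big)\cdot 0,
\]
i.e.\ more simply: convexity gives $\phi(1)-\phi(0)-\phi'(0)\ge 0$ and also $\phi(1)-\phi(0)-\phi'(0) \ge \phi(1)+\phi(-1)-2\phi(0) - (\phi(1)-\phi(0)-\phi'(0))$, whence $\phi(1)-\phi(0)-\phi'(0) \ge \tfrac12(\phi(1)+\phi(-1)-2\phi(0))$. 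Wait — I should be careful here; the clean route is: by convexity the map $t\mapsto \phi(t)-\phi(0)-t\phi'(0)$ is nonnegative, convex, vanishes at $0$ with zero derivative, so it is nondecreasing on $[0,\infty)$ and bounded below on $[0,1]$ by its average; combining with the symmetrized second difference, $\phi(1)-\phi(0)-\phi'(0) \ge \tfrac12\big(\phi(1)+\phi(-1)-2\phi(0)\big) \ge \tfrac{\lambda}{2|\nu|}|\tau_\perp|^{2+\g}$. Then since $\nabla f(\nu)\cdot\tau = \nabla f(\nu)\cdot\tau_\perp + c\,\nabla f(\nu)\cdot\nu/|\nu|$ and $\nabla f(\nu)\cdot\nu = f(\nu)$ by Euler's identity, while $f(\nu+\tau) \ge f(\nu + \tau_\perp) + c\,f(\nu)/|\nu| \cdot$ (something) — hmm, this last comparison needs the convexity/homogeneity of $f$ applied to split off the parallel direction, which is where the extra factor $2^{2+\g}$ in the denominator comes from (one pays a factor $2$ in passing from $\tau_\perp$ to $\tau$ in the exponent-$(2+\g)$ term, or equivalently absorbs $|\tau_\perp|\le|\tau|$ differently). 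I would make this splitting precise by writing $\nu+\tau = \tfrac12(2\nu + 2\tau)$ and using $1$-homogeneity plus the midpoint convexity inequality between $\nu+2\tau_\perp$ and the parallel shift, so that $|\tau_\perp|^{2+\g}$ appears with the correct constant $\lambda/(2^{2+\g}|\nu|)$.

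The main obstacle I anticipate is precisely this bookkeeping in separating the tangential and radial parts of $\tau$ while tracking the exact constant $2^{2+\g}$: one must apply $\g$-$\lambda$ convexity at the right base point and at the right scale, and use $1$-homogeneity of $f$ and Euler's identity $\nabla f(\nu)\cdot\nu = f(\nu)$ to make the linear terms match. A secondary, minor point is the differentiability of $f$ at $\nu\neq 0$: a priori \eqref{gamma lambda} is a pointwise inequality for a convex function, so I would either invoke that $\g$-$\lambda$ convexity forces differentiability off the origin (the second-difference bound rules out corners in tangential directions, and $1$-homogeneity handles the radial direction), or phrase \eqref{frombelow} with one-sided directional derivatives and note it reduces to the stated form wherever $\nabla f$ exists, which by the above is all of $\Rn\setminus\{0\}$. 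Everything else is a routine one-variable convexity computation.
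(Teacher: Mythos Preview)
Your central one-variable step is wrong. You claim that for $\phi(t)=f(\nu+t\tau_\perp)$ and $\psi(t)=\phi(t)-\phi(0)-t\phi'(0)$ one has
\[
\psi(1)\;\ge\;\tfrac12\big(\phi(1)+\phi(-1)-2\phi(0)\big)=\tfrac12\big(\psi(1)+\psi(-1)\big),
\]
i.e.\ $\psi(1)\ge\psi(-1)$. Convexity, $\psi(0)=0$, and $\psi'(0)=0$ do \emph{not} force this: take $\psi(t)=(t_-)^2$ (so $\psi$ is convex, $\psi(0)=\psi'(0)=0$, $\psi(1)=0$, $\psi(-1)=1$). Nothing in the $\g$-$\lambda$ convexity hypothesis rules out such asymmetry of $t\mapsto f(\nu+t\tau_\perp)$ about $t=0$, since \eqref{gamma lambda} only bounds the symmetric second difference $\psi(t)+\psi(-t)$ from below. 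So the inequality you need simply fails in general, and with it your derivation of the constant $\lambda/(2|\nu|)$ collapses. This also explains why you cannot make the bookkeeping for the parallel part close: you are trying to recover the factor $2^{2+\g}$ from a splitting that is not where it actually comes from.

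The fix, and the paper's argument, is a one-line change of base point: apply \eqref{gamma lambda} at the \emph{midpoint} $\nu+\tau/2$ with perturbation $\tau/2$, so the second difference reads $\tilde f(\nu)+\tilde f(\nu+\tau)-2\tilde f(\nu+\tau/2)$ with $\tilde f(\cdot)=f(\cdot)-f(\nu)-\nabla f(\nu)\cdot(\cdot-\nu)$. Then $\tilde f(\nu)=0$ and, by convexity, $\tilde f(\nu+\tau/2)\ge 0$, which immediately yields $\tilde f(\nu+\tau)\ge$ the lower bound; the $2^{2+\g}$ arises from $|\tau/2|^{2+\g}$. No decomposition $\tau=\tau_\perp+c\,\nu/|\nu|$ is needed at all, and the differentiability issue you raise is moot since only convexity (hence $\tilde f\ge 0$) is used beyond \eqref{gamma lambda}.
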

\begin{proof} Note that if $f$ is $\g$-$\lambda$ convex, then $f$ is convex. 
To see that \eqref{frombelow} holds for given $\nu_0$ and $\tau_0$, we let $\tilde{f}(\nu) = f(\nu) - f(\nu_0) - \nabla f(\nu_0)\cdot(\nu - \nu_0).$
At the midpoint $\nu_0 + \frac{\tau_0}{2}$, the $\g$-$\lambda$ convexity condition gives us the following:
$$
\tilde{f}(\nu_0) + \tilde{f}(\nu_0 + \tau_0) - 2\tilde{f}(\nu_0 +\frac{\tau_0}{2} ) \geq \frac{\lambda}{|\nu_0|} \Big|\frac{\tau_0}{2} - \Big(\frac{\tau_0}{2}\cdot \frac{\nu_0}{|\nu_0|}\Big) \frac{\nu_0}{|\nu_0|}\Big|^{2+\g}.$$
Convexity implies that $\tilde{f}(\nu_0 + \frac{\tau_0}{2}) \geq 0$, and $\tilde{f}(\nu_0) = 0$ by definition of $\tilde{f}$, implying \eqref{frombelow}.
\end{proof}
Finally, we prove Theorem~\ref{th2}.
\begin{proof}[Proof of Theorem~\ref{th2}]
The quantity $\beta^*_{\PP}(E)$ 
measures the overall size of the Cauchy-Schwarz deficit on the boundary of $E$, while $\BPP(E)$ measures the overall deficit in the Fenchel inequality.
 Our aim is to obtain a pointwise bound of the Cauchy-Schwarz deficit functional by the Fenchel deficit functional, and then integrate over the reduced boundary of $E$. Without loss of generality, we may assume that $|E|= |K| = 1 $ and $E$ has center zero in the sense defined in Section~\ref{abg}.

We fix $x\in \partial^*E$ and consider the Fenchel deficit functional 
$G(\nu) = f(\nu) -\nu \cdot \frac{x}{f_*(x)}$, which possesses the properties that $G(\nu)\geq 0$ and $G(\nu )= 0$ if and only if $\nu =c\, \nabla f_*(x)$ for some $c>0$.

Let $w=\frac{\nabla f_*(x)}{|\nabla f_*(x)|} = \nu_K(\frac{x}{f_*(x)}).$ 
Lemma~\ref{lem5}, with $\nu = w $ and $\tau = \nu_E -w$, implies that
$$f(\nu_E )
 \geq \frac{\lambda}{2^{2+\g}} |(\nu_E - w) -((\nu_E - w) \cdot  w) w|^{2+\g}
+f(w) +\nabla f(w)\cdot (\nu_E - w).$$
Therefore, since $\nabla f(w) = \frac{x}{f_*(x)} $ and $f(w) = \nabla f(w) \cdot w$,
\begin{align*}
G(\nu_E)  \geq \frac{\lambda}{2^{2+\g}} \left|(\nu_E - w ) - ((\nu_E - w)\cdot w) w\right|^{2+\g}&  = \frac{\lambda}{2^{2+\g}}
(1-(\nu_E\cdot  w)^2)^{(2+\g)/2}\\
&= \frac{\lambda}{2^{2+\g}}((1-\nu_E\cdot  w)(1+\nu_E\cdot  w))^{(2+\g)/2}.
\end{align*}
We want to show that there exists some $c_1$ such that
\begin{equation}\label{lastone}G(\nu_E) \geq c_1 (1 - \nu_E \cdot  w)^{(2+\g)/2}.\end{equation}
When $ w \cdot \nu_E \geq -c_0$ for some fixed $0<c_0<1,$ then $G(\nu_E) \geq  \frac{\lambda}{2^{2+\g}}(1-c_0)^{(2+\g)/2} (1-\nu_E\cdot  w)^{(2+\g)/2}$ and \eqref{lastone} holds.
On the other hand, when $ w \cdot \nu_E < -c_0$ for $c_0$ small, we expect that $\frac{x}{f_*(x)}\cdot \nu_E$ must also be small and so $G(\nu_E)$ is not too small. 
Indeed,
$$m_{\PP} \leq f( w)  =\frac{x}{f_*(x)} \cdot  w  = \frac{|x|}{f_*(x)} \cos (\theta_1)\leq M_{\PP} \cos( \theta_1),$$
where $\theta_1$ is the angle between $ w$ and $\frac{x}{f_*(x)}.$ Similarly,
$$-c_0 \geq \nu_E \cdot  w = \cos( \theta_2),$$
where $\theta_1$ is the angle between $ w$ and $\nu_E$. Noting that $0<m_{\PP}/M_{\PP}<1$, so
 $\cos^{-1} (m_{\PP}/M_{\PP} )\in (0, \pi/2), $
  we let $\theta_0 =2 \cos^{-1 }(m_{\PP}/M_{\PP}) + \e,$ where $\e>0$ is chosen small enough so that $\theta_0<\pi.$ Letting $c_0 = - \cos(\theta_0),$ we deduce that
$\theta_1 \leq \cos^{-1} (m_{\PP}/M_{\PP})$ and $\theta_2 \geq \theta_0.$
Then 
$$\frac{x}{f_*(x)} \cdot \nu_E \leq \frac{|x|}{f_*(x)} \cos (\theta_2 - \theta_1) \leq M_{\PP} \cos\left( \cos^{-1}(m_{\PP}/M_{\PP})+ \e\right) \leq m_{\PP} - M_{\PP} c_{\e},$$
for a constant $c_{\e}>0$. Since $f(\nu_E) \geq m_{\PP}$, we have $G(\nu_E)\geq  M_{\PP} c_{\e}$, implying \eqref{lastone} because $1 -\nu_E \cdot  w\leq 2$.

H\"{o}lder's inequality and \eqref{lastone} imply
\begin{align*}
 \int_{\partial^* E} 1 - \nu_E \cdot  w \ d\Hn&
\leq
\Hn(\partial^*E)^{\g/(2+\g)} \Big( \int_{\partial^* E} (1-\nu_E\cdot  w)^{(2+\g)/2}d\Hn\Big)^{2/(2+\g)} 
\\
&= 
c_1^{-2/(2+\g)} P(E)^{\g/(2+\g)}
 \Big( \int_{\partial^* E}c_1 (1-\nu_E\cdot  w)^{(2+\g)/2}d\Hn\Big)^{2/(2+\g)} \\
&\leq 
c_1^{-2/(2+\g)} P(E)^{\g/(2+\g)} \Big( \int_{\partial^* E} G(\nu_E)d\Hn\Big)^{2/(2+\g)}.
\end{align*}
Dividing by $n|K|^{1/n}|E|^{1/n'}$ and taking the square root, we obtain
$$\BPP^*(E) \leq c_1^{-1/(2+\g)} \Big(\frac{P(E)}{n|K|^{1/n}|E|^{1/n'}}\Big)^{\g/2(2+\g)} \beta_{\PP}(E)^{2/(2+\g)}.$$
\end{proof}

\bibliographystyle{plain}
\bibliography{references2}

\begin{thebibliography}{10}

\bibitem{alm66}
F.~J. Almgren~Jr.
\newblock Some interior regularity theorems for minimal surfaces and an
  extension of {B}ernstein's theorem.
\newblock {\em Ann. of Math.}, 84(2):277--292, 1966.

\bibitem{SSA77}
F.~J. Almgren~Jr., R.~Schoen, and L.~Simon.
\newblock Regularity and singularity estimates on hypersurfaces minimizing
  parametric elliptic variational integrals.
\newblock {\em Acta Math.}, 139(1):217--265, 1977.

\bibitem{BarBraJul14}
M.~Barchiesi, A.~Brancolini, and V.~Julin.
\newblock Sharp dimension free quantitative estimates for the {G}aussian
  isoperimetric inequality.
\newblock {\em Ann. Probab.}, 2015.

\bibitem{BDuzFus13}
V.~B{\"{o}}gelein, F.~Duzaar, and N.~Fusco.
\newblock A quantitative isoperimetric inequality on the sphere.
\newblock {\em (preprint)}, 2013.

\bibitem{BDS2015}
V.~B{\"o}gelein, F.~Duzaar, and C.~Scheven.
\newblock A sharp quantitative isoperimetric inequality in hyperbolic
  $n$-space.
\newblock {\em Calc. Var. Partial Differential Equations}, 54(4):3967--4017,
  2015.

\bibitem{bomb82}
E.~Bombieri.
\newblock Regularity theory for almost minimal currents.
\newblock {\em Arch. Ration. Mech. Anal.}, 78(2):99--130, 1982.

\bibitem{brothers1994}
J.~E. Brothers and F.~Morgan.
\newblock The isoperimetric theorem for general integrands.
\newblock {\em Michigan Math. J.}, 41(3):419--431, 1994.

\bibitem{CiLe12}
M.~Cicalese and G.P. Leonardi.
\newblock A selection principle for the sharp quantitative isoperimetric
  inequality.
\newblock {\em Arch. Ration. Mech. Anal.}, 206(2):617--643, 2012.

\bibitem{clarenz2004surfaces}
U.~Clarenz and H.~Von Der~Mosel.
\newblock On surfaces of prescribed {F}-mean curvature.
\newblock {\em Pacific J. Math.}, 213(1):15--36, 2004.

\bibitem{DGS92}
B.~Dacorogna, W.~Gangbo, and N.~Sub{\'{\i}}a.
\newblock Sur une g\'en\'eralisation de l'in\'egalit\'e de {W}irtinger.
\newblock {\em Ann. Inst. H. Poincar\'e Anal. Non Lin\'eaire}, 9(1):29--50,
  1992.

\bibitem{dacorogna1992wulff}
B.~Dacorogna and C.E. Pfister.
\newblock Wulff theorem and best constant in {S}obolev inequality.
\newblock {\em J. Math. Pures Appl.}, 71(2):97--118, 1992.

\bibitem{de2014sharp}
G.~De~Philippis and F.~Maggi.
\newblock Sharp stability inequalities for the {P}lateau problem.
\newblock {\em Journal of Differential Geometry}, 96(3):399--456, 2014.

\bibitem{DePMag14}
G.~De~Philippis and F.~Maggi.
\newblock Regularity of free boundaries in anisotropic capillarity problems and
  the validity of {Y}oung's law.
\newblock {\em Arch. Ration. Mech. Anal.}, 216(2):473--568, 2015.

\bibitem{DuzaarSteffen02}
F.~Duzaar and K.~Steffen.
\newblock Optimal interior and boundary regularity for almost minimizers to
  elliptic variational integrals.
\newblock {\em J. Reine Angew. Math.}, 546:73--138, 2002.

\bibitem{Eldan2015}
R.~Eldan.
\newblock A two-sided estimate for the {G}aussian noise stability deficit.
\newblock {\em Invent. Math.}, 201(2):561--624, 2015.

\bibitem{Esposito2005}
L.~Esposito, N.~Fusco, and C.~Trombetti.
\newblock A quantitative version of the isoperimetric inequality: the
  anisotropic case.
\newblock {\em Ann. Sc. Norm. Sup. Pisa Cl. Sci.}, 4(4):619--651, 2005.

\bibitem{figallimaggi11}
A.~Figalli and F.~Maggi.
\newblock On the shape of liquid drops and crystals in the small mass regime.
\newblock {\em Arch. Ration. Mech. Anal.}, 201(1):143--207, 2011.

\bibitem{FiMP10}
A.~Figalli, F.~Maggi, and A.~Pratelli.
\newblock A mass transportation approach to quantitative isoperimetric
  inequalities.
\newblock {\em Invent. Math.}, 182(1):167--211, 2010.

\bibitem{fonseca1991wulff}
I.~Fonseca.
\newblock The {W}ulff theorem revisited.
\newblock {\em Proc, Roy. Soc. London. Ser. A: Math. Phys. Sci.},
  432(1884):125--145, 1991.

\bibitem{fonseca1991uniqueness}
I.~Fonseca and S.~M{\"u}ller.
\newblock A uniqueness proof for the {W}ulff theorem.
\newblock {\em Proc. Roy. Soc. of Edinburgh: Sect. A: Math.},
  119(1-2):125--136, 1991.

\bibitem{fuglede1989}
B.~Fuglede.
\newblock Stability in the isoperimetric problem for convex or nearly spherical
  domains in $\mathbf{R}^n$.
\newblock {\em Trans. Amer. Math. Soc.}, 314(2):619--638, 1989.

\bibitem{fuscojulin11}
N.~Fusco and V.~Julin.
\newblock A strong form of the quantitative isoperimetric inequality.
\newblock {\em Calc. Var. Partial Differential Equations}, 50(3-4):925--937,
  2014.

\bibitem{FMP08}
N.~Fusco, F.~Maggi, and A.~Pratelli.
\newblock The sharp quantitative isoperimetric inequality.
\newblock {\em Ann. of Math.}, 168(3):941--980, 2008.

\bibitem{Gurtin1985}
M.~E. Gurtin.
\newblock On a theory of phase transitions with interfacial energy.
\newblock {\em Arch. Ration. Mech. Anal.}, 87(3):187--212, 1985.

\bibitem{Hall1992}
R.R. Hall.
\newblock A quantitative isoperimetric inequality in $n$-dimensional space.
\newblock {\em J. Reine Angew. Math.}, 428:161--176, 1992.

\bibitem{HHW}
R.R. Hall, W.K. Hayman, and A.W. Weitsman.
\newblock On asymmetry and capacity.
\newblock {\em J. d'Analyse Math.}, 56(1):87--123, 1991.

\bibitem{Herring1951}
C.~Herring.
\newblock Some theorems on the free energies of crystal surfaces.
\newblock {\em Phys. Rev.}, 82:87--93, 1951.

\bibitem{John48}
F.~John.
\newblock {Extremum Problems with Inequalities as Subsidiary Conditions}.
\newblock In {\em Studies and Essays: Courant Anniversary Volume}, pages
  187--204. Wiley-Interscience, New York, 1948.

\bibitem{maggi2008some}
F.~Maggi.
\newblock Some methods for studying stability in isoperimetric type problems.
\newblock {\em Bull. Amer. Math. Soc.}, 45(3):367--408, 2008.

\bibitem{maggi2012sets}
F.~Maggi.
\newblock {\em Sets of Finite Perimeter and Geometric Variational Problems: An
  Introduction to Geometric Measure Theory}.
\newblock Cambridge Studies in Advanced Mathematics. Cambridge University
  Press, 2012.

\bibitem{milman1986asymptotic}
V.~D. Milman and G.~Schechtman.
\newblock {\em Asymptotic theory of finite-dimensional normed spaces}, volume
  1200 of {\em Lecture Notes in Mathematics}.
\newblock Springer-Verlag, Berlin, 1986.
\newblock With an appendix by M. Gromov.

\bibitem{schneider2013convex}
R.~Schneider.
\newblock {\em Convex bodies: the {B}runn-{M}inkowski theory}, volume 151.
\newblock Cambridge University Press, 2013.

\bibitem{simon1984lectures}
L.~Simon.
\newblock {\em Lectures on geometric measure theory}, volume~3 of {\em
  Proceedings of the Centre for Mathematical Analysis, Australian National
  University}.
\newblock Australian National University, Centre for Mathematical Analysis,
  Canberra, 1983.

\bibitem{taylor1978}
J.~E. Taylor.
\newblock Crystalline variational problems.
\newblock {\em Bull. Amer. Math. Soc.}, 84(4):568--588, 1978.

\bibitem{Wulff1901}
G.~Wulff.
\newblock Zur frage der geschwindigkeit des wachsturms und der aufl{\"o}sung
  der kristallfl{\"a}chen.
\newblock {\em Z. Kristallogr.}, 34(34):440--530, 1901.

\end{thebibliography}
\end{document}